\documentclass[11pt]{amsart}
\usepackage[left=1.5in,top=1.15in,right=1.5in,bottom=1.15in]{geometry}
\usepackage{amssymb,amsmath,amsthm,amsfonts,amsopn,url,hyperref}
\usepackage[all]{xy}
\usepackage{enumitem}
\usepackage{amscd}
\theoremstyle{plain}
\newtheorem{thm}{Theorem}

\newtheorem{prop}[thm]{Proposition}
\newtheorem{lemma}[thm]{Lemma}
\newtheorem{corollary}[thm]{Corollary}
\theoremstyle{definition}
\newtheorem{defn}[thm]{Definition}
\newtheorem*{defn*}{Definition}

\theoremstyle{remark}
\newtheorem{remark}[thm]{Remark}

\newtheorem*{remark*}{Remark}
\newtheorem{claim}[equation]{Claim}
\newtheorem*{claim*}{Claim}
\usepackage[T1]{fontenc}
\usepackage{aurical}
\usepackage{carolmin}
\usepackage{uncial}
\usepackage{inslrmin}

\numberwithin{thm}{section}
\numberwithin{equation}{thm}

\newcommand{\field}[1]{\mathbb{#1}}
\newcommand{\N}{\field{N}}

\newcommand{\ideal}[1]{\mathfrak{#1}}
\newcommand{\m}{\ideal{m}}
\newcommand{\n}{\ideal{n}}
\newcommand{\p}{\ideal{p}}

\newcommand{\ia}{\ideal{a}}

\newcommand{\ffunc}[1]{\mathrm{#1}}
\newcommand{\func}[1]{\mathrm{#1} \,}

\newcommand{\Spec}{\func{Spec}}

\newcommand{\Ext}{\ffunc{Ext}}

\newcommand{\im}{\func{im}}

\newcommand{\arrow}[1]{\stackrel{#1}{\rightarrow}}

\newcommand{\ra}{\rightarrow}
\newcommand{\onto}{\twoheadrightarrow}

\renewcommand{\epsilon}{\varepsilon}

\DeclareMathOperator{\ann}{ann}
\DeclareMathOperator{\Hom}{Hom}

\DeclareMathOperator{\Max}{Max}

\newcommand{\tint}{tight\ interior}

\newcommand{\red}{\mathrm{red}}

\newcommand{\taub}{{\tau_b}}
\newcommand{\taufg}{{\tau_{\textnormal{fg}}}}
\newcommand{\cf}{{\itshape cf.} }
\newcommand{\ie}{{\itshape i.e.} }
\newcommand{\tensor}{\otimes}
\newcommand{\mydot}{{{\,\begin{picture}(1,1)(-1,-2)\circle*{2}\end{picture}\ }}}
\newcommand{\mydt}{{{\,\begin{picture}(1,1)(-1,-2)\circle*{2}\end{picture}}}}

\newcommand{\NPullback}[2]{\text{\Fontskrivan\slshape{F}}^{\hspace{2pt}#2}_{#1}}

\newcommand{\bc}{\mathfrak{c}}
\newcommand{\ba}{\mathfrak{a}}
\newcommand{\bp}{\mathfrak{p}}
\newcommand{\resp}{{\emph{resp.}\ }}
\newcommand{\ov}[1]{{\overline{{#1}}}}
\DeclareMathOperator{\Image}{Image}
\DeclareMathOperator{\Ann}{Ann}
\newcommand{\myH}{{\bf h}}

\DeclareFontFamily{OMS}{rsfs}{\skewchar\font'60}
\DeclareFontShape{OMS}{rsfs}{m}{n}{<-5>rsfs5 <5-7>rsfs7 <7->rsfs10 }{}
\DeclareSymbolFont{rsfs}{OMS}{rsfs}{m}{n}
\DeclareSymbolFontAlphabet{\scr}{rsfs}
\newcommand{\sC}{\scr{C}}
\newcommand{\sB}{\scr{B}}
\newcommand{\sD}{\scr{D}}
\newcommand{\noMinPrime}{\circ}

\usepackage{color}

\author{Neil Epstein}
\address{Department of Mathematical Sciences \\ George Mason University \\ Fairfax, VA  22030}
\email{nepstei2@gmu.edu}
 \thanks{The first author was partially supported by a grant from the German Research Foundation (DFG)}

\author{Karl Schwede}
\address{Department of Mathematics \\
The Pennsylvania State University \\
University Park, PA 16802 \\
USA}
\email{schwede@math.psu.edu}
\thanks{The second author was partially supported by NSF DMS \#{}1064485/0969145 and by an NSF postdoctoral fellowship \#{}0703505}

\subjclass[2010]{13A35, 13B22, 13B40, 14B05, 14F18}
\keywords{tight closure, test ideal, interior operation, persistence, Cartier algebra, normalization, conductor ideal}

\date{\today}
\title{A dual to tight closure theory}

\begin{document}

\begin{abstract}
We introduce an operation on modules over an $F$-finite ring of characteristic $p$.  We call this operation \emph{tight interior}.  While it exists more generally, in some cases this operation is equivalent to the Matlis dual of tight closure.  Moreover, the interior of the ring itself is simply the big test ideal.  We directly prove, without appeal to tight closure, results analogous to persistence, colon capturing, and working modulo minimal primes, and we begin to develop a theory dual to phantom homology.

Using our dual notion of persistence, we obtain new and interesting transformation rules for tight interior, and so in particular for the test ideal, which complement the main results of a recent paper of the second author and K.~Tucker.  Using our theory of phantom homology, we prove a vanishing theorem for maps of Ext.  We also compare our theory to M.~Blickle's notion of Cartier modules, and in the process, we prove new existence results for Blickle's test submodule.  Finally, we apply the theory we developed to the study of test ideals in non-normal rings, proving that the finitistic test ideal coincides with the big test ideal in some cases.
\end{abstract}
\maketitle

\section{Introduction}

Tight closure is an operation on modules over a commutative ring of characteristic $p > 0$.  Indeed, given any modules $N \subseteq M$ over a ring $R$, the tight closure of $N$ in $M$ is a submodule of $M$, $N^* \supseteq N$.  Tight closure has had many interesting applications, but has turned out to be a decidedly non-geometric operation since it does not commute with localization \cite{BrennerMonsky}.  In this paper we develop a dual theory to tight closure that does commute with localization.

Indeed, suppose that $M$ is any $R$-module.  We introduce a new operation, the \emph{tight interior} of $M$.  This is a submodule of $M$ which we denote by $M_{*R}$ or simply $M_*$ if the context is clear (see Section \ref{sec.Definitions} for the definition).  
In the case that $R$ is local, complete, and $F$-finite and $M$ is finitely generated, then the tight interior operation just corresponds the Matlis dual of $(M^{\vee}) / (0^*_{M^{\vee}})$, see Corollary \ref{cor.DualityWithTC}.  However, the theory seems well behaved in greater generality (although we still largely work in the $F$-finite case).  For example, the construction of $M_*$ always commutes with localization.  Furthermore, we show that many of the key properties of tight closure -- persistence, colon capturing, working modulo minimal primes and others -- have direct analogs for this operation (which we prove directly without appeal to tight closure theory), see for example Proposition \ref{pr:minprimes}, Theorem \ref{thm:copers}, and Theorem \ref{thm.CoColonCapturing}.  We state one variant of persistence in this setting below in order to illustrate our meaning.
\vskip 6pt
\noindent
{\bf{Theorem \ref{thm:copers}.}} [Co-persistence]{\it{
Let $j: R \ra S$ be a ring homomorphism such that both $R_\red$ and $S_\red$ are $F$-finite.  Let $M$ be an $R$-module.  Then the natural evaluation map $\epsilon: \Hom_R(S,M) \ra M$ restricts to the $(-)_*$ level.  In other words we have a map
\[
 \left( \Hom_R(S,M) \right)_{* S} \to M_{* R}
\]
induced by restriction of $\epsilon$.
}}
\vskip 6pt
Interestingly, for a reduced $F$-finite ring $R$, if we view $R$ as a module over itself, then $R_*$ is simply the big test ideal of $R$.  This was essentially pointed out in \cite{HaraTakagiOnAGeneralizationOfTestIdeals} but Matlis-dual statements have a much longer history, see for example \cite[Proposition 8.23(c)]{HochsterHunekeTC1}, \cite[Proposition 4.4]{SmithTestIdeals}, \cite{SmithTightClosureParameter}, \cite{LyubeznikSmithStrongWeakFregularityEquivalentforGraded}, \cite{LyubeznikSmithCommutationOfTestIdealWithLocalization}.  In fact, this body of work motivates our definition in general.  Indeed, many of the properties of tight interior mentioned above (working modulo minimal primes, persistence, colon capturing etc.) lead to interesting and useful statements simply for big test ideals.

One of the most interesting notions to come out of tight closure theory is that of phantom homology and phantom resolutions.  In Section \ref{sec.Cophantom}, we develop a dual theory.  One of the most celebrated results in tight closure theory is the vanishing theorem for maps of Tor.  Using our dual theory of cophantom resolutions and co-persistence, we directly obtain a vanishing theorem for maps of Ext, stated below.
\vskip 6pt
\noindent
{\bf{Theorem \ref{thm:VanishingForExt}.}} [Vanishing theorem for maps of Ext] {\it{
Consider a sequence of ring homomorphisms $A \hookrightarrow R \ra T$ such that $T$ is $F$-finite and regular (or simply strongly $F$-regular), $R$ is a module-finite torsion-free extension of $A$, $A$ is a domain, and both $A$ and $R$ are $F$-finite.  Let $M$ be a finite $A$-module of finite injective dimension.  Then for all $i\geq 1$, the natural maps $\Ext^i_A(T, M) \ra \Ext^i_A(R, M)$ are zero.
}}
\vskip 6pt
Another motivation for this work is to develop connections with recent work of M.~Blickle.  In \cite{BlickleTestIdealsViaAlgebras}, \cf \cite{BlickleBoeckleCartierModulesFiniteness}, a theory of test submodules was developed.   Suppose that $R$ is an $F$-finite ring, $M$ is a finitely generated $R$-module, and finally fix a graded ring $\sD$ of maps $\phi : {}^e M \to M$ (for various $e > 0$) with multiplication via composition \cf \cite{LyubeznikSmithCommutationOfTestIdealWithLocalization,SchwedeTestIdealsInNonQGor}.  In this case, Blickle associated a submodule $\tau(M, \sD) \subseteq M$ which he called the test submodule of $M$ with respect to $\sD$ (although existence of this submodule is an open question in general), see Section \ref{sec.CartierModules}.  However, for a general module $M$, if we pick the canonical choice of graded ring $\sC_M$ (namely, $\sC_M$ is made up off all possible maps), then we obtain the following theorem which also proves existence of $\tau(M, \sD)$ in a new case.
\vskip 6pt
\noindent
{\bf{Theorem \ref{thm.InteriorVsBlickle}.}} {\it{ Suppose that $R$ is an $F$-finite reduced ring and that $M$ is a finitely generated $R$-module whose support is equal to the support of $R$.  Then $M_* = \tau(M, \sC_M)$.  In particular, $\tau(M, \sC_M)$ exists.
}}
\vskip 6pt
\noindent Note that in the case that $M = R$, $\tau(R, \sC_R)$ merely coincides with the big test ideal $\tau_b(R)$ motivating Manuel Blickle's original definition.  In the case that $M = R$, the fact that $R_* = \tau(R, \sC_R)$ was essentially proven in \cite{LyubeznikSmithCommutationOfTestIdealWithLocalization} \cf \cite{SchwedeFAdjunction,SchwedeTestIdealsInNonQGor}.

Motivated by our observations on this interior operation, especially with regards to its behavior modulo minimal primes, we also study the behavior of the test ideal for non-normal rings.  In particular, we obtain the following theorem which can be viewed as a variant of \cite[Proposition 4.4]{SmithMultiplierTestIdeals}:
\vskip 6pt
\noindent
{\bf{Theorem \ref{thm.TauForNormalizationFRegular}.}} \cf [Theorem \ref{thm.ModMinimalPrimesIsFRegularThen}] {\it{
Suppose that $R$ is an $F$-finite reduced ring and that $R \subseteq R^{\textnormal{N}}$ is its normalization with conductor $\bc$.  If $R^{\textnormal{N}}$ is strongly $F$-regular, then
\[
\bc = \taufg(R) = \taub(R).
\]
}}
\vskip 0pt
\noindent Here $\bc$ is the conductor ideal and $\tau_{\textnormal{fg}}(R)$ is the finitistic (or classical) test ideal as originally defined in \cite{HochsterHunekeTC1}.

In order to prove this, we show several transformation rules for tight interior (and thus for big test ideals) under ring maps, these also rely on co-persistence mentioned above.  We also explore the behavior of both the big and finitistic test ideal under normalization in general, see Section \ref{sec:conductor}.  However, the transformation rule for \tint\ under finite maps should be of particular interest.  We state this result below.
\vskip 6pt
\noindent
{\bf{Corollary \ref{cor.TauTransformsUnderFiniteExtensions}.}} {\it{Suppose that $R$ is a $F$-finite reduced ring of characteristic $p > 0$ and $R \subseteq S$ is a finite extension with $S$ reduced.  Further suppose that $L$ is a finite $R$-module whose support agrees with $R$ and $M$ is a finite $S$-module whose support agrees with $S$.  Then:
\begin{equation*}
\label{eq.InteriorDescriptionForFiniteMaps}
L_* = \sum_{e \geq 0} \sum_{\phi} \phi( {}^e(M_*) )
\end{equation*}
where $\phi$ ranges over all elements of $\Hom_{R}( {}^e M, L)$.

In particular, if $L = R$ and $M = S$, then
\begin{equation*}
\label{eq.TauDescriptionForFiniteMaps}
\taub(R) = \sum_{e \geq 0} \sum_{\phi} \phi( {}^e\taub(S) )
\end{equation*}
where $\phi$ ranges over all elements of $\Hom_{R}( {}^e S, R)$.
}}
\vskip 6pt
This result should be viewed as complementary to several of the main results of \cite{SchwedeTuckerTestIdealFiniteMaps}.  In particular, this implies that the main result of \cite{SchwedeTuckerTestIdealFiniteMaps} is closely related to persistence in tight closure.

\begin{remark}
\label{rem.FormalDuality}
As mentioned in the introduction, many of the results of this paper can be viewed as a formal dual of the results of tight closure (even though they are applied to modules for which Matlis duality need not apply).  Indeed, a number of the theorems contained here-in use roughly the same proofs as in tight closure theory once we make the following identifications:
\begin{center}
\begin{tabular}{l|r}
notion & dual notion\\
\hline \hline
kernel & image\\
\hline
sum & intersection\\
\hline
tensor & (covariant) $\Hom$
\end{tabular}
\end{center}
\end{remark}
\vskip 6pt
\noindent {\it Acknowledgements:}  The authors would like to thank Manuel Blickle and Kevin Tucker for many valuable conversations.  They are also indebted to the referee, Manuel Blickle, Alberto Fernandez Boix, Karen Smith and Kevin Tucker for useful comments on a previous draft of the paper.

\section{Definitions and basic properties of \tint}
\label{sec.Definitions}
Let $R$ be a Noetherian ring of prime characteristic $p>0$, such that $R_\red$ is an $F$-finite Noetherian ring of prime characteristic $p>0$.  Use the usual conventions $q=p^e$ and $q_j = p^{e_j}$, $q' = p^{e'}$, etc.  For an $R$-module $M$ and integers $e\geq 0$, let ${}^eM$ denote the $R$-$R$ bimodule, with element set ${}^eM = \{^ex \mid x\in M\}$ formally the same as that of $M$, with the same additive structure, and with $R$-$R$ bimodule structure given by $r \cdot  ({}^ex) \cdot s = {}^e(r^q sx)$ for $r,s \in R$ and ${}^ex \in {}^eM$.


Let $M$ be an $R$-module.  For a power $q_0$ of $p$ and $c\in R^\circ$ (recall that $R^{\circ}$ is the elements of $R$ not contained in any minimal prime), let
\[
M_*[c,q_0] := \sum_{q\geq q_0} \im (\Hom_R({}^eR, M) \ra M),
\]
where the map in question sends a map $g$ to $g({}^ec)$.  Then we define the \emph{\tint} of $M$, $M_*$, via:
\[
M_* := \bigcap_{c\in R^o} \bigcap_{e_0 \geq 0} M_*[c, p^{e_0}].
\]
We will consider how the \tint\ changes as we vary the ring we are working over.  Therefore, if $M$ is both an $R$ and $S$-module, then we use $M_{*R}$ and $M_{*S}$ to denote the \tint\ of $R$ as an $R$-module and $S$-module respectively.

We start by first observing how this operation behaves with respect to module maps.

\begin{lemma}
\label{lem.ModuleMapsRespectInterior}
Let $f : N \to M$ be a map of $R$-modules.  Then $f({N_*}) \subseteq M_*$.
\end{lemma}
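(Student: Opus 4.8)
The plan is to unwind the two layers in the definition of $(-)_*$ and then reduce to the functoriality of covariant $\Hom$ under post-composition. Recall that $M_* = \bigcap_{c\in R^{\circ}} \bigcap_{e_0\geq 0} M_*[c,p^{e_0}]$, so to prove $f(N_*)\subseteq M_*$ it is enough to fix $c\in R^{\circ}$ and $e_0\geq 0$ and show $f(N_*)\subseteq M_*[c,p^{e_0}]$. Since $N_*\subseteq N_*[c,p^{e_0}]$ by definition of the inner intersection, it suffices to prove the a priori stronger containment $f\big(N_*[c,p^{e_0}]\big)\subseteq M_*[c,p^{e_0}]$.

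Next I would expand $N_*[c,p^{e_0}] = \sum_{q\geq p^{e_0}} \im\big(\Hom_R({}^eR,N)\ra N\big)$, where the displayed map is evaluation at ${}^ec$. Because $f$ is additive, it suffices to check that a single generator $g({}^ec)$, with $g\in\Hom_R({}^eR,N)$ and $q=p^e\geq p^{e_0}$, is carried into $M_*[c,p^{e_0}]$. But $f\circ g$ is again an $R$-linear map ${}^eR\ra M$, hence an element of $\Hom_R({}^eR,M)$, and $f\big(g({}^ec)\big) = (f\circ g)({}^ec)$ therefore lies in $\im\big(\Hom_R({}^eR,M)\ra M\big)$, which is one of the summands defining $M_*[c,p^{e_0}]$. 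Intersecting over all $c\in R^{\circ}$ and all $e_0$ yields $f(N_*)\subseteq M_*$.

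I do not expect any genuine obstacle: the entire content is that $\Hom_R({}^eR,-)$ is functorial under post-composition, compatibly with the evaluation-at-${}^ec$ maps — precisely the ``kernel $\leftrightarrow$ image'' entry of Remark~\ref{rem.FormalDuality} made concrete. The only point deserving a moment's care is bookkeeping: on the source side one enlarges $N_*$ to $N_*[c,p^{e_0}]$ even though $M_*$ is an intersection on the target side, which is legitimate exactly because $N_*\subseteq N_*[c,p^{e_0}]$ and because the relevant target for each fixed pair $(c,e_0)$ is $M_*[c,p^{e_0}]$ rather than $M_*$ itself.
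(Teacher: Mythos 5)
Your proof is correct and is essentially the paper's own argument: both reduce to the observation that for $g\in\Hom_R({}^eR,N)$ the post-composition $f\circ g$ lies in $\Hom_R({}^eR,M)$ and satisfies $f(g({}^ec))=(f\circ g)({}^ec)$, then sum over generators and intersect over all $c$ and $e_0$. The paper phrases this element-wise for $z\in N_*$ directly, while you route through the (harmless) stronger containment $f(N_*[c,p^{e_0}])\subseteq M_*[c,p^{e_0}]$; there is no substantive difference.
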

\begin{proof}
Suppose $z \in N_*$.  Thus for every $c \in R^{\noMinPrime}$, and ever $e_0 \geq 0$, there exists $e_1, \dots, e_n > e_0$ and $\phi_{i} : {}^{e_i} R \to N$, $e_i \geq e_0$ such that $z = \sum_{i = 1}^n \phi_i({}^{e_i} c)$.  But then $f(z) = \sum_{i = 1}^n f(\phi_i({}^{e_i} c))$.
\end{proof}

Of course, the above map is not generally surjective.
We now note that when computing interiors, we may `reduce to the reduced case':
\begin{prop}\label{pr:red}
Let $M$ be an $R$-module, and $\n$ the nilradical of $R$, so that $R_\red = R/\n$.  Then \[
M_{*R} = (0 :_M \n)_{*R_\red}.
\]
\end{prop}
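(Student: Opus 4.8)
The statement asserts $M_{*R} = (0:_M\n)_{*R_\red}$, where the right-hand side treats the submodule $(0:_M\n)$ of $M$ as an $R_\red$-module. The plan is to unwind both sides directly from the definition of tight interior, exploiting three elementary facts: (i) since $\n$ is nilpotent, for every $c \in R^\circ$ there is a power $q$ of $p$ with $c^q \in R^\circ$ having $\n$-torsion vanish appropriately, but more to the point $R^\circ$ and $(R_\red)^\circ$ have the same image under $R \onto R_\red$; (ii) for an $R$-module $N$ and $e\geq 0$, the bimodule ${}^eR$ acts on $N$ through $r \mapsto r^q$, so a map ${}^eR \to N$ of $R$-modules is the same data whether we think of $R$ or $R_\red$ acting, provided the target is killed by $\n$ — indeed for $q \gg 0$ we have $\n^{[q]} = 0$, so ${}^e\n$ acts as zero on any $R$-module and ${}^eR \to N$ factors through ${}^e R_\red$; (iii) every $R$-linear map $g : {}^eR \to M$ has image killed by $\n^{[q]} = 0$ for $q\gg 0$... no — its image lands in $(0:_M \n^{[q]})$, and since $\n$ is nilpotent, $(0:_M\n^{[q]}) = (0:_M\n)$ once $q$ is large enough that $\n^{[q]}=0$, hence $\subseteq (0:_M\n) $ is automatic for large $q$, which is all that matters since the interior only involves a tail $q \geq q_0$.

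Concretely, I would first show the containment $M_{*R} \subseteq (0:_M\n)_{*R_\red}$. Fix $z \in M_{*R}$. Given $c \in (R_\red)^\circ$, lift it to $c \in R^\circ$, and given $e_0$, choose (using the definition of $M_{*R}$ applied to a possibly larger threshold) maps $\phi_i : {}^{e_i}R \to M$ over $R$ with $e_i \geq e_0$ large enough that $\n^{[q_i]} = 0$, and $z = \sum_i \phi_i({}^{e_i}c)$. For such $e_i$, the image of $\phi_i$ is automatically contained in $(0:_M\n)$ (because ${}^{e_i}\n$ acts as zero on $M$, so $\phi_i({}^{e_i}(rx)) = \phi_i({}^{e_i}r \cdot {}^{e_i}x)$ has the right torsion), and $\phi_i$ factors through ${}^{e_i}R_\red$, giving an $R_\red$-linear map ${}^{e_i}R_\red \to (0:_M\n)$ sending ${}^{e_i}\bar c \mapsto \phi_i({}^{e_i}c)$. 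This exhibits $z \in (0:_M\n)_{*R_\red}[\bar c, p^{e_0}]$ for all $\bar c, e_0$, hence $z$ lies in the intersection. For the reverse containment, given $z$ in the right-hand side and $c \in R^\circ$, $e_0$ arbitrary: by definition over $R_\red$ we get $R_\red$-linear maps $\psi_i : {}^{e_i}R_\red \to (0:_M\n)$ with $z = \sum \psi_i({}^{e_i}\bar c)$; compose each with $R \onto R_\red$ (resp. the inclusion $(0:_M\n)\into M$) to get $R$-linear maps ${}^{e_i}R \to M$ witnessing $z \in M_{*R}[c,p^{e_0}]$.

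The one genuine subtlety — the step I'd flag as the main obstacle, though it is still routine — is the bookkeeping around $R^\circ$ versus $(R_\red)^\circ$ and the interplay between "there exists $q_0$" quantifiers. One must check that the minimal primes of $R$ and of $R_\red$ correspond, so that $c \in R^\circ$ iff $\bar c \in (R_\red)^\circ$; this is standard since $\n$ is contained in every prime. One also needs that forcing $e_i$ large enough to kill $\n^{[q_i]}$ is harmless: the definition of $M_*[c,q_0]$ already ranges over all $q \geq q_0$, and the outer intersection over all $e_0$ means we may as well assume $q_0$ is as large as we like — in particular $q_0 \geq $ the nilpotency-related bound making $\n^{[q_0]} = 0$. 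Once these quantifier manipulations are set up cleanly, both inclusions are immediate from the identification of $R$-linear maps ${}^eR \to (0:_M\n)$ with $R_\red$-linear maps ${}^eR_\red \to (0:_M\n)$, and the proof is short.
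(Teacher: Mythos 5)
Your reverse containment $(0:_M\n)_{*R_\red} \subseteq M_{*R}$ is fine and is essentially the paper's argument: compose each $\psi_i : {}^{e_i}(R_\red) \to (0:_M\n)$ with the surjection ${}^{e_i}R \onto {}^{e_i}(R_\red)$ and the inclusion $(0:_M\n)\into M$. The gap is in the forward containment. You assert that once $e_i$ is large enough that $\n^{[q_i]}=0$, an $R$-linear map $\phi_i : {}^{e_i}R \to M$ ``factors through ${}^{e_i}R_\red$''. That is false. What is true for such $e_i$ is that $\n$ annihilates ${}^{e_i}R$ through the \emph{left} module structure ($r\cdot{}^{e_i}x = {}^{e_i}(r^{q_i}x) = 0$ for $r\in\n$), which makes ${}^{e_i}R$ an $R_\red$-module and forces $\im \phi_i \subseteq (0:_M\n)$ --- both of which you use correctly. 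But factoring through ${}^{e_i}(R_\red)$ requires $\phi_i$ to vanish on the kernel of ${}^{e_i}R \onto {}^{e_i}(R_\red)$, namely the submodule ${}^{e_i}\n = \{{}^{e_i}x : x\in\n\}$, and this submodule is not zero (for instance ${}^{e}x \neq 0$ in ${}^{e}(k[x]/(x^2))$) and need not be killed by $\phi_i$. So you only obtain an $R_\red$-linear map ${}^{e_i}R \to (0:_M\n)$, whereas the definition of $(0:_M\n)_{*R_\red}$ demands maps out of ${}^{e_i}(R_\red)$; the canonical comparison $\Hom_{R_\red}({}^{e_i}(R_\red),-) \into \Hom_{R_\red}({}^{e_i}R,-)$ points the wrong way for this containment.

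The paper closes this gap by precomposing with the Frobenius ${}^{d}(F^{e_0}) : {}^{d}R \to {}^{d+e_0}R$, ${}^{d}r \mapsto {}^{d+e_0}(r^{p^{e_0}})$, which genuinely kills ${}^{d}\n$ because $\n^{[p^{e_0}]}=0$, so the composite does factor through ${}^{d}(R_\red)$. The price is that the evaluation point changes: one ends up computing $\phi({}^{d+e_0}(c^{p^{e_0}}))$ rather than $\phi$ at a $p^{e_0+d}$-th ``root'' of $c$ itself, yielding $M_{*R}[c^{p^{e_0}}, p^{e_0+d}] \subseteq (0:_M\n)_{*R_\red}[c,p^{d}]$, and one then intersects over all $c$ and $d$. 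Some device of this kind --- replacing the test element $c$ by $c^{p^{e_0}}$ so as to land in the image of the Frobenius and kill ${}^{e_i}\n$ --- is genuinely needed; the forward inclusion is not the pure quantifier-bookkeeping exercise your write-up treats it as.
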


\begin{proof}
Let $e_0$ be a fixed integer which is large enough that $\n^{[p^{e_0}]} = 0$.  Then for any $e\geq e_0$, we have \begin{equation}
\label{eq.HomsAreTheSame}
\begin{array}{rcl}
\Hom_R({}^eR, M) &= & \Hom_R( {R_\red} \otimes_{R_\red} {}^eR , M) \\
&\cong & \Hom_{R_\red}({}^eR, \Hom_R(R/\n, M)) \\
&\cong & \Hom_{R_\red}({}^eR, (0 :_M \n)).
\end{array}
\end{equation}
All these isomorphisms are fully canonical, and from the surjection $R \onto R_\red$, exactness of the ${}^e(-)$ functor, and left-exactness of the $\Hom$ functor, we have
a canonical injection $\Hom_{R_\red}({}^e(R_\red), (0 :_M\n)) \hookrightarrow \Hom_{R_\red}({}^eR, (0:_M\n))$ given by restriction.  Combine this with the displayed equation \eqref{eq.HomsAreTheSame} and tracing
what happens to elements, and we see that $M_{*R} \supseteq (0 :_M \n)_{*R_\red}$.

Conversely, for any $d \geq 0$, consider the map ${}^{d} (F^{e_0}) : {}^{d} R \to {}^{d + e_0} R$ which sends \mbox{${}^d r \mapsto {}^{d+e_0} (r^{p^{e_0}})$}.  This induces a map $\Psi : \Hom_R({}^{d+e_0} R, (0 :_M \n)) \to \Hom_R({}^d R, (0 :_M \n))$ with $\Psi(\phi)({}^d r) = \phi({}^{d + e_0}(r^{p^{e_0}}))$ and hence a map
\[
\Phi : \Hom_R({}^{d+e_0} R, (0 :_M \n)) \to \Hom_R({}^d R_{\red}, (0 :_M \n))
\]
since $\n^{[p^{e_0}]} = 0$.  Obviously $\Hom_R({}^d R_{\red}, (0 :_M \n)) = \Hom_{R_{\red}}({}^d R_{\red}, (0 :_M \n))$ and by \eqref{eq.HomsAreTheSame} we have $\Hom_R({}^{d+e_0} R, (0 :_M \n)) \cong \Hom_R({}^{d+e_0} R, M)$.  Combining these isomorphisms gives us
\[
\Phi' : \Hom_R({}^{d+e_0} R, M) \to \Hom_{R_{\red}}({}^d R_{\red}, (0 :_M \n)).
\]
Finally, for any $c \in R^{\circ}$, we have ${}^{d +e_0}(c^{p^{e_0}}) \in {}^{d+e_0} R^{\circ}$.  It follows from construction that $(\Phi'(\phi))({}^d c) = \phi({}^{d+e_0}(c^{p^{e_0}})) \in (0 :_M \n) \subseteq M$.  Thus for any $d \geq 0$
\[
M_{* R}[c^{p^{e_0}}, p^{e_0 + d}] \subseteq (0 :_M \n)_{* R_{\red}}[c, p^d].
\]
Summing over these terms completes the proof.
\end{proof}

An element $c\in R^\circ$ is called a \emph{$q_0$-weak co-test element} if $M_* = M_*[c, q_0]$ for all $R$-modules $M$.  We call $c$ a \emph{co-test element} if $M_* = M_*[c,1]$ for all $R$-modules $M$.   It is clear by definition that $M_* \subseteq M$ for all $R$-modules $M$.  To see that co-test elements exist, we show that in cases we care about, they coincide both with so-called ``big'' test elements \emph{and}, as a bonus, with the nontrivial elements of $R_*$:

\begin{prop}\label{pr:bigtestint}
Let $R$ be an $F$-finite reduced ring.  Then $\taub(R) = R_*$.
\end{prop}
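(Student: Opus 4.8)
The plan is to unwind both sides of the claimed equality $\taub(R) = R_*$ directly from their definitions and match them term by term. Recall that the big test ideal $\taub(R)$ of an $F$-finite reduced ring can be described as the smallest ideal $J$, nonzero on every minimal prime (equivalently, the smallest ideal not contained in any minimal prime that is ``$\sC_R$-compatible'' / uniformly $F$-compatible), such that $\phi({}^e J) \subseteq J$ for all $e \geq 0$ and all $\phi \in \Hom_R({}^eR, R)$; dually-from-below, one has the classical Hochster--Huneke-style description $\taub(R) = \sum_{e \geq 0} \sum_{\phi \in \Hom_R({}^eR,R)} \phi({}^e c)$ for any single big test element $c$, and more crudely $\taub(R) = \sum_{c \in R^\circ}\sum_{e,\phi} \phi({}^e c)$ without assuming a test element exists. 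Taking $M = R$ in the definition of tight interior, $R_*[c,q_0] = \sum_{q \geq q_0} \im(\Hom_R({}^eR,R) \to R)$ where the map sends $g \mapsto g({}^ec)$, so $R_*[c,q_0]$ is exactly $\sum_{e \geq e_0}\sum_{\phi} \phi({}^e c)$, and $R_* = \bigcap_{c \in R^\circ}\bigcap_{e_0} R_*[c,q_0]$.

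First I would prove the inclusion $R_* \subseteq \taub(R)$. Fix any big test element $c$ (these exist since $R$ is $F$-finite reduced, by Hochster--Huneke / Lipman--Sathaye-type results, or by the $F$-finite existence results for test elements). Then $R_* \subseteq R_*[c,1] = \sum_{e \geq 0}\sum_\phi \phi({}^ec)$, and this last sum is contained in $\taub(R)$ because $\taub(R)$ is $\sC_R$-compatible: $c \in \taub(R)$ (a test element lies in the test ideal) and $\phi(\taub(R)) \subseteq \taub(R)$ for every $\phi \in \Hom_R({}^eR,R)$, so $\phi({}^ec) \in \taub(R)$ for all such $\phi$ and $e$. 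Hence $R_* \subseteq \taub(R)$.

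For the reverse inclusion $\taub(R) \subseteq R_*$, I would use the description of $\taub(R)$ as the sum $\sum_{c \in R^\circ}\sum_{e \geq 0}\sum_{\phi}\phi({}^ec)$ over \emph{all} $c \in R^\circ$, together with the key ``self-improving'' feature: for a fixed $c$, the submodule $\sum_{e \geq 0}\sum_\phi \phi({}^ec)$ is independent of how large we take the starting exponent. Concretely, given any $c \in R^\circ$ and any $e_0$, one shows $\sum_{e \geq 0}\sum_\phi \phi({}^ec) \subseteq R_*[c', p^{e_0}]$ for a suitable $c'$ (or for $c$ itself), using that a power $c^{p^{e_0}}$ differs from $c$ by precomposition/postcomposition with a Frobenius power, exactly as in the proof of Proposition \ref{pr:red} above — the map ${}^d(F^{e_0}): {}^dR \to {}^{d+e_0}R$ lets one push generators of $\phi({}^ec)$ with small $e$ up to generators with large $e$ at the cost of replacing $c$ by $c^{p^{e_0}}$, which is still in $R^\circ$. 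Intersecting over all $c$ and all $e_0$ then gives that every element of $\taub(R)$ lies in $R_*$. Alternatively, and more cleanly, one invokes the existence of a big test element $c_0$: then $\taub(R) = \sum_{e \geq 0}\sum_\phi \phi({}^e c_0)$, and since $c_0$ can be chosen to be a test element in the strong sense that $M_* = M_*[c_0,q_0]$ eventually (i.e.\ a weak co-test element — which the subsequent discussion of co-test elements will establish using precisely this proposition, so care is needed to avoid circularity), one gets $\taub(R) = R_*[c_0, q_0] \supseteq R_*$ is false-direction; rather $R_*[c_0,1] = \taub(R)$ and $R_* = \bigcap_c R_*[c,\cdot]$, so the content is really that $R_*[c,\cdot]$ is the \emph{same} for all $c \in R^\circ$ when $R$ is reduced $F$-finite, which is the standard fact that big test elements are abundant.

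The main obstacle I anticipate is establishing that $R_*[c, q_0]$ does not actually depend on $c \in R^\circ$ nor on $q_0$ — that is, the ``uniformity'' hidden in the definition of $R_*$ as an intersection collapses to a single term. This is precisely the content of the existence of (big) test elements for $F$-finite reduced rings, and the honest proof routes through the theory of test elements (Hochster--Huneke's results via the $\Gamma$-construction or Lipman--Sathaye, or Lyubeznik--Smith / Schwede's $F$-finite arguments). So in practice the proposition should be proved by: (i) the easy inclusion $R_* \subseteq R_*[c,1] = \sum_{e,\phi}\phi({}^ec) \subseteq \taub(R)$ using $\sC_R$-compatibility of $\taub(R)$ and $c \in \taub(R)$ for a test element $c$; and (ii) citing that a big test element $c$ satisfies $\taub(R) = \sum_{e,\phi}\phi({}^ec)$ and, being a test element, $c^{p^{e_0}}$ generates the same module for every $e_0$ and every other $c$ can be absorbed, so that $\taub(R) \subseteq R_*[c', q_0]$ for every relevant $c', q_0$, hence $\taub(R) \subseteq R_*$. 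I would keep the write-up short, leaning on the referenced literature (\cite{HochsterHunekeTC1}, \cite{HaraTakagiOnAGeneralizationOfTestIdeals}, \cite{LyubeznikSmithCommutationOfTestIdealWithLocalization}) for the existence and behavior of big test elements rather than reproving it.
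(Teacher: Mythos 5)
Your proposal is correct in substance and takes essentially the same route as the paper: the paper's entire proof is to invoke the equivalence (i) $\iff$ (ii) of \cite[Lemma 2.1]{HaraTakagiOnAGeneralizationOfTestIdeals} with $\ia = R$, which, once the definitions are unwound, says verbatim that $c \in \taub(R)$ if and only if $c \in R_*[d,q_0]$ for every $d \in R^\circ$ and every $q_0$ (i.e.\ $c \in R_*$) --- exactly the literature characterization you defer to for the hard inclusion $\taub(R) \subseteq R_*$, while your easy inclusion via compatibility of $\taub(R)$ is a correct elaboration of one implication. One small correction: the ``crude'' description $\taub(R) = \sum_{c\in R^\circ}\sum_{e,\phi}\phi({}^ec)$ is false (taking $e=0$, $\phi=\mathrm{id}$, and $c=1$ shows the right-hand side is all of $R$); the uniform description that avoids choosing a test element is the \emph{intersection} over $c \in R^\circ$, which is $R_*$ itself --- but since you never actually use the sum version, your argument stands.
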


\begin{proof}
This is a direct application of the equivalence ``(i) $\iff$ (ii)'' from  \cite[Lemma 2.1]{HaraTakagiOnAGeneralizationOfTestIdeals} with $\ia =R$, and $r_e=1=x_1^{(e)}$ for all $e$.
\end{proof}

\begin{remark}\label{rmk:locint}
Let $R$ be an $F$-finite reduced ring.  By \cite{LyubeznikSmithCommutationOfTestIdealWithLocalization} (also see \cite[Lemma 2.1]{HaraTakagiOnAGeneralizationOfTestIdeals}), we have $W^{-1} \taub(R) = \taub(W^{-1}R)$ for any multiplicative subset $W$ of $R$, and hence we have $W^{-1} (R_{*R}) = (W^{-1}R)_{*(W^{-1}R)}$.

By the same remark, if $R$ is local then $\taub(R) \otimes_R \hat{R} = \taub(\hat{R})$, and hence we have $\widehat{R_{*R}} = \hat{R}_{*\hat{R}}$.
\end{remark}

\begin{thm}\label{thm:cotestint}
Let $R$ be an $F$-finite reduced ring.  Then $R^\circ \cap R_*$ is precisely the set of all co-test elements of $R$.

Hence, for any $c\in R^\circ$, $c$ is a co-test element $\iff c\in R_* \iff c \in \taub(R)$.
\end{thm}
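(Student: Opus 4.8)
The plan is to put all the real content into the first assertion — that $R^{\circ}\cap R_{*}$ is exactly the set of co-test elements — and then read off the ``hence'' clause by feeding in Proposition \ref{pr:bigtestint}, which identifies $R_{*}$ with $\taub(R)$. For the first assertion I would prove the two set inclusions separately: one direction is essentially formal, and the other — that membership in $R^{\circ}\cap R_{*}$ forces the equality $M_{*}=M_{*}[c,1]$ for every $M$ — carries the weight.

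For the inclusion ``co-test element $\Rightarrow c\in R^{\circ}\cap R_{*}$'', suppose $c$ is a co-test element. By definition $c\in R^{\circ}$, and specializing the equality $M_{*}=M_{*}[c,1]$ to $M=R$ gives $R_{*}=R_{*}[c,1]$. The $e=0$ summand of $R_{*}[c,1]$ is the image of the evaluation $\Hom_{R}({}^{0}R,R)\to R$, $g\mapsto g({}^{0}c)$; since ${}^{0}R=R$ and the identity map sends ${}^{0}c$ to $c$, we get $c\in R_{*}[c,1]=R_{*}$, hence $c\in R^{\circ}\cap R_{*}$. For the reverse inclusion, fix $c\in R^{\circ}\cap R_{*}$ and an arbitrary $R$-module $M$; I must show $M_{*}=M_{*}[c,1]$. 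The inclusion $M_{*}\subseteq M_{*}[c,1]$ is automatic, as $M_{*}[c,p^{0}]=M_{*}[c,1]$ is one of the terms in the intersection defining $M_{*}$.

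The remaining inclusion $M_{*}[c,1]\subseteq M_{*}$ is where the work is. I would take $z\in M_{*}[c,1]$, fix an arbitrary $d\in R^{\circ}$ and $e_{0}\geq 0$, and show $z\in M_{*}[d,p^{e_{0}}]$; as $d$ and $e_{0}$ vary this forces $z\in M_{*}=\bigcap_{d,e_{0}}M_{*}[d,p^{e_{0}}]$. Write $z=\sum_{i}\phi_{i}({}^{e_{i}}c)$ with $\phi_{i}\in\Hom_{R}({}^{e_{i}}R,M)$. Because $c\in R_{*}\subseteq R_{*}[d,p^{e_{0}}]$, we may also write $c=\sum_{j}\psi_{j}({}^{f_{j}}d)$ with $\psi_{j}\in\Hom_{R}({}^{f_{j}}R,R)$ and every $f_{j}\geq e_{0}$. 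Now one ``composes'': applying the exact functor ${}^{e_{i}}(-)$ to $\psi_{j}$ and using the canonical identification ${}^{e_{i}}({}^{f_{j}}R)\cong{}^{e_{i}+f_{j}}R$ yields $\phi_{i}\circ{}^{e_{i}}\psi_{j}\in\Hom_{R}({}^{e_{i}+f_{j}}R,M)$, while additivity of ${}^{e_{i}}(-)$ and the naturality identity ${}^{e_{i}}(\psi_{j}({}^{f_{j}}d))=({}^{e_{i}}\psi_{j})({}^{e_{i}+f_{j}}d)$ give
\[
z=\sum_{i}\phi_{i}\!\left({}^{e_{i}}c\right)=\sum_{i,j}\left(\phi_{i}\circ{}^{e_{i}}\psi_{j}\right)\!\left({}^{e_{i}+f_{j}}d\right).
\]
Since each exponent satisfies $e_{i}+f_{j}\geq f_{j}\geq e_{0}$, every summand lies in the image of $\Hom_{R}({}^{e}R,M)\to M$, $g\mapsto g({}^{e}d)$, for some $e\geq e_{0}$; hence $z\in M_{*}[d,p^{e_{0}}]$, as required. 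This shows $c$ is a co-test element, completing the first assertion. The ``hence'' clause is then immediate: for $c\in R^{\circ}$ we have $c$ a co-test element $\iff c\in R^{\circ}\cap R_{*}\iff c\in R_{*}\iff c\in\taub(R)$, the last equivalence by Proposition \ref{pr:bigtestint}.

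I expect the only step needing care is the Frobenius-twist bookkeeping in the composition: verifying that ${}^{e_{i}}\psi_{j}$, read through ${}^{e_{i}}({}^{f_{j}}R)\cong{}^{e_{i}+f_{j}}R$, genuinely yields an element of $\Hom_{R}({}^{e_{i}+f_{j}}R,M)$ after precomposition into $\phi_{i}$, and that the exponents add — which is exactly what produces the bound $e_{i}+f_{j}\geq e_{0}$ and hence membership in $M_{*}[d,p^{e_{0}}]$ rather than merely in $M_{*}[d,1]$. Everything else is routine manipulation of the kind already carried out in the proof of Proposition \ref{pr:red}.
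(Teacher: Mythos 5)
Your proof is correct and follows essentially the same route as the paper's: the easy direction via the identity map on $R$, and the hard direction by writing $c=\sum_j\psi_j({}^{f_j}d)$ with $f_j\geq e_0$ (using $c\in R_*$) and precomposing each $\phi_i$ with ${}^{e_i}\psi_j$ to land in $M_*[d,p^{e_0}]$. The only difference is notational (Frobenius pushforwards ${}^e(-)$ versus the paper's $p^e$-th roots $R^{1/q}$), and your twist bookkeeping, including the additivity of exponents, checks out.
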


\begin{proof}
It is clear from the definitions that any co-test element $c$ of $R$ is in $R_*$.  To see this, simply note that $R_* = R_*[c,1]$ and consider the identity map on $R$.

Conversely let $c \in R^\circ \cap R_*$, and let $M$ be an arbitrary $R$-module.  Let $q'=p^{e'}$ be a power of $p$, let $g \in \Hom_R(R^{1/q'}, M)$, and let $z=g(c^{1/q'})$.  Let $d\in R^\circ$ and $e_0 \geq 0$ an integer.  Since $c\in R_*$, there is some $e_1\geq e_0$ and $R$-linear maps $\phi^{(e)}: R^{1/q} \ra R$ such that \[
c=\sum_{e=e_0}^{e_1} \phi^{(e)}(d^{1/q}).
\]
Then for each such $e$, $(\phi^{(e)})^{1/q'}$ is an $R^{1/q'}$-linear map (hence also an $R$-linear map) from $R^{1/qq'}$ to $R^{1/q'}$.  And we have \begin{align*}
z &= g(c^{1/q'}) = \sum_{e=e_0}^{e_1} g([\phi^{(e)}(d^{1/q})]^{1/q'})\\
&= \sum_{e=e_0}^{e_1} g((\phi^{(e)})^{1/q'}(d^{1/qq'})) = \sum_{e=e_0}^{e_1} (g \circ (\phi^{(e)})^{1/q'})(d^{1/qq'}) \\
&= \sum_{e=e_0+e'}^{e_1+e'} (g \circ (\phi^{(e-e')})^{1/q'})(d^{1/q}).
\end{align*}
But each $g \circ (\phi^{(e-e')})^{1/q'} \in \Hom_R(R^{1/q}, M)$.  Hence, $z\in M_*[d, q'q_0] \subseteq M_*[d, q_0]$.

Since every element of $M_*[c,1]$ is generated by elements like the $z$ given above, it follows that $M_*[c,1] \subseteq M_*$ (since $d$ and $q_0$ were chosen arbitrarily), whence $M_*=M_*[c,1]$.  Since $M$ was arbitrary, $c$ is a co-test element.

The last statement of the theorem follows by combining the first statement with Proposition~\ref{pr:bigtestint}.
\end{proof}

Hence, by Remark~\ref{rmk:locint}, ``completely stable'' co-test elements exist in the strong sense that if $c$ is a co-test element, so is $c/1 \in W^{-1}R$ for any multiplicative set $W$, and so is $c/1 \in \widehat{R_\p}$ for any $\p \in \Spec R$.

Let $c$ be a co-test element.  It is clear from the definition that for all $d\in R^\circ$, $dc$ is also a co-test element.


\begin{remark}
Since big test elements coincide with co-test elements at least in most of the cases of interest in this work, we often use the term big test element, instead of co-test element in order for the language contained in this paper to appear more familiar to experts.
\end{remark}

\begin{prop}\label{pr:idem}
Let $R$ be a ring such that $R_\red$ is $F$-finite.  Then for any $R$-module $M$, $(M_*)_* = M_*$.
\end{prop}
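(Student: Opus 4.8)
The plan is to reduce to the case that $R$ itself is reduced and then to use the existence of a co-test element. For the reduction, let $\n$ be the nilradical of $R$. Proposition~\ref{pr:red} gives $M_{*R} = (0:_M\n)_{*R_\red}$, and since $M_{*R}$ is a submodule of $(0:_M\n)$ it is itself annihilated by $\n$, so $0:_{M_{*R}}\n = M_{*R}$. Applying Proposition~\ref{pr:red} a second time, now to the module $M_{*R}$, we get
\[
(M_{*R})_{*R} \;=\; (0:_{M_{*R}}\n)_{*R_\red} \;=\; (M_{*R})_{*R_\red} \;=\; \bigl((0:_M\n)_{*R_\red}\bigr)_{*R_\red}.
\]
Thus the proposition for the $F$-finite reduced ring $R_\red$, applied to the module $(0:_M\n)$, implies the proposition for $R$, and so from now on I assume $R$ is reduced and $F$-finite.

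The inclusion $(M_*)_*\subseteq M_*$ is immediate from the fact that $N_*\subseteq N$ for every $R$-module $N$ (take $N=M_*$). For the reverse inclusion, fix a co-test element $c\in R^\circ$ of $R$ (these exist by Theorem~\ref{thm:cotestint}); then $c^2$ is again a co-test element, since any $R^\circ$-multiple of a co-test element is one. Let $z\in M_* = M_*[c^2,1]$ and write $z = \sum_{i=1}^{n}\phi_i({}^{e_i}(c^2))$ with $\phi_i\in\Hom_R({}^{e_i}R,M)$. For each $i$, define $\psi_i\colon{}^{e_i}R\to M$ by $\psi_i({}^{e_i}r) = \phi_i({}^{e_i}(cr))$; a routine check against the bimodule relations shows $\psi_i$ is $R$-linear. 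The key point is that $\im\psi_i\subseteq M_*$: for fixed $r\in R$, the assignment ${}^{e_i}x\mapsto\phi_i({}^{e_i}(rx))$ defines an element $g_r\in\Hom_R({}^{e_i}R,M)$ with $g_r({}^{e_i}c) = \phi_i({}^{e_i}(rc)) = \psi_i({}^{e_i}r)$, whence $\psi_i({}^{e_i}r)\in M_*[c,1] = M_*$ because $c$ is a co-test element. Therefore $\psi_i\in\Hom_R({}^{e_i}R,M_*)$, and because $\psi_i({}^{e_i}c) = \phi_i({}^{e_i}(c^2))$ we obtain
\[
z \;=\; \sum_{i=1}^{n}\psi_i({}^{e_i}c) \;\in\; (M_*)_*[c,1] \;=\; (M_*)_*,
\]
again using that $c$ is a co-test element (now for the module $M_*$). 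This proves $M_*\subseteq(M_*)_*$.

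The whole argument is the formal dual of the proof that tight closure is idempotent, with ``absorb a factor of the co-test element $c$ into the map ${}^{e_i}R\to M$'' replacing ``absorb a factor of the test element into the Frobenius power''; the one thing to get right is that one must start from a presentation of $z$ using $c^2$ rather than $c$, so that there is a spare factor to absorb into the $\psi_i$. The step I would be most careful about is the reduction to the reduced case --- in particular the observation that $M_{*R}$ is already annihilated by $\n$, which is what lets the colon in the second application of Proposition~\ref{pr:red} collapse so that the two applications compose.
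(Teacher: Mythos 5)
Your proof is correct and follows essentially the same route as the paper's: the same two-step reduction to the reduced case via Proposition~\ref{pr:red} (using that $M_{*R}$ is killed by $\n$), and then the same ``write $z$ via $c^2$ and absorb one factor of $c$ into the maps'' argument, with your explicit verification that $\im\psi_i\subseteq M_*$ filling in a step the paper leaves to the reader.
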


\begin{proof}
It is clear from the definition that $N_* \subseteq N$ for all $N$.  So we need only show that $M_* \subseteq (M_*)_*$.

Suppose first that $R$ is not reduced, and let $\n$ be the nilradical of $R$.  Then by Proposition~\ref{pr:red}, we have \[
(M_{*R})_{*R} = (0 :_{M_{*R}} \n)_{*R_\red} = (0 :_{(0 :_M \n)_{*R_\red}} \n)_{*R_\red} = ((0:_M\n)_{*R_\red})_{*R_\red},
\]
where the second equality holds because $(0:_M \n)_{*R_\red}$ is a submodule of $(0:_M\n)$, and hence its annihilator contains $\n$.

Therefore, we may assume from now on that $R$ is a reduced $F$-finite ring.  By \cite[Theorem 3.4]{HochsterHunekeTightClosureAndStrongFRegularity}, \cf  \cite[Section 6]{HochsterHunekeTC1}, there is an element $c \in \taub(R) \cap R^\circ$, which by Theorem~\ref{thm:cotestint} is a co-test element. Let $z\in M_*$.  In particular, then, $z\in M_*[c^2, 1]$.  That is, there is some $e_1$ such that there are $R$-linear maps $g_e: R^{1/q} \ra M$ for each $0 \leq e \leq e_1$ such that $z = \sum_{e=0}^{e_1} g_e(c^{2/q})$.   Now define $h_e: R^{1/q} \ra M$ via $r^{1/q} \mapsto g_e((cr)^{1/q})$.  This is clearly $R$-linear, and since $c$ is a co-test element, $\im h_e \subseteq M_*$.  In particular, $z=\sum_{e=0}^{e_1} h_e(c^{1/q}) \in (M_*)_*[c,1] = (M_*)_*$ again since $c$ is a co-test element.
\end{proof}

\begin{prop}[Minimal primes]\label{pr:minprimes}
Let $R$ be a ring such that $R_\red$ is $F$-finite, let $M$ be an $R$-module, and let $\p_1, \dotsc, \p_n$ be the minimal primes of $R$.  Then \[
M_{*R} = \sum_{i=1}^n (0 :_M \p_i)_{*(R/\p_i)}.
\]
\end{prop}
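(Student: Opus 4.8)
The plan is to reduce to the reduced case, prove the inclusion ``$\supseteq$'' by hand, and for the reverse inclusion exploit the module-finite, generically isomorphic extension $R\into\ov R:=\prod_{i=1}^n R/\p_i$ together with a carefully chosen conductor element. To reduce, apply Proposition~\ref{pr:red}: writing $\n$ for the nilradical, it gives $M_{*R}=(0:_M\n)_{*R_\red}$; the minimal primes of $R$ and $R_\red$ correspond via $\p_i\leftrightarrow\bar\p_i:=\p_i/\n$; one has $R/\p_i\cong R_\red/\bar\p_i$; and, since $\n\subseteq\p_i$, one has $(0:_M\p_i)=(0:_{(0:_M\n)}\bar\p_i)$. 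Hence the statement for the pair $(R_\red,(0:_M\n))$ implies the statement for $(R,M)$, and we may assume $R$ is reduced, so that $\p_1\cap\dots\cap\p_n=0$ and $R^\circ=R\setminus\bigcup_i\p_i$.

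For ``$\supseteq$'', fix $i$ and let $z\in(0:_M\p_i)_{*(R/\p_i)}$; I claim $z\in M_{*R}$. Given $c\in R^\circ$ and $e_0\ge 0$, the image $\bar c$ of $c$ in $R/\p_i$ lies in $(R/\p_i)^\circ$, so there are finitely many $R/\p_i$-linear maps $\psi_e\colon {}^e(R/\p_i)\to(0:_M\p_i)$ with $z=\sum_e\psi_e({}^e\bar c)$. Precomposing each $\psi_e$ with the canonical left-$R$-linear surjection ${}^eR\onto{}^e(R/\p_i)$ and postcomposing with the inclusion $(0:_M\p_i)\into M$ produces maps $\phi_e\in\Hom_R({}^eR,M)$ with $\phi_e({}^ec)=\psi_e({}^e\bar c)$, so $z\in M_{*R}[c,p^{e_0}]$. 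As $c$ and $e_0$ were arbitrary, $z\in M_{*R}$, and summing over $i$ gives $\sum_i(0:_M\p_i)_{*(R/\p_i)}\subseteq M_{*R}$.

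For ``$\subseteq$'', let $\epsilon_1,\dots,\epsilon_n\in\ov R$ be the orthogonal idempotents, so that $\ov R^{1/q}=\bigoplus_i\epsilon_i\ov R^{1/q}$ with $\epsilon_i\ov R^{1/q}\cong(R/\p_i)^{1/q}$; since $R\to\ov R$ is an isomorphism after localization at each minimal prime, its conductor $\bc=\ann_R(\ov R/R)$ meets $R^\circ$. Fix $d\in\bc\cap R^\circ$ and, using that each $F$-finite reduced ring $R/\p_i$ has a big test element (\cite[Theorem~3.4]{HochsterHunekeTightClosureAndStrongFRegularity}, exactly as in the proof of Proposition~\ref{pr:idem}), fix $c^{(i)}\in\taub(R/\p_i)\cap(R/\p_i)^\circ$; then $c:=d\cdot(c^{(1)},\dots,c^{(n)})\in\ov R$ actually lies in $R$ (because $d\ov R\subseteq R$), lies in $\bc\cap R^\circ$, and has image $d_i c^{(i)}\in\taub(R/\p_i)\cap(R/\p_i)^\circ$ in $R/\p_i$, hence a co-test element of $R/\p_i$ by Theorem~\ref{thm:cotestint}. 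Now let $z\in M_{*R}$. Since $M_{*R}\subseteq M_{*R}[c^2,1]$, write $z=\sum_e g_e(c^{2/q})$ with $g_e\in\Hom_R(R^{1/q},M)$, $q=p^e$, the sum finite. Because $c\in\bc$, multiplication by $c^{1/q}$ maps $\ov R^{1/q}$ into $R^{1/q}$, so composing it with $g_e$ gives an $R$-linear map $\ov R^{1/q}\to M$; its restriction to $\epsilon_i\ov R^{1/q}\cong(R/\p_i)^{1/q}$ has image in $(0:_M\p_i)$ (as $\p_i$ annihilates $\epsilon_i\ov R^{1/q}$), hence is an $R/\p_i$-linear map $\theta_{e,i}\colon(R/\p_i)^{1/q}\to(0:_M\p_i)$ satisfying $\theta_{e,i}(c_i^{1/q})=g_e(\epsilon_i c^{2/q})$. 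Summing over $i$ and using $\sum_i\epsilon_i=1$ gives $g_e(c^{2/q})=\sum_i\theta_{e,i}(c_i^{1/q})$, hence $z=\sum_i\bigl(\sum_e\theta_{e,i}(c_i^{1/q})\bigr)$; each inner sum lies in $(0:_M\p_i)_{*(R/\p_i)}[c_i,1]=(0:_M\p_i)_{*(R/\p_i)}$, the equality because $c_i$ is a co-test element of $R/\p_i$. Thus $z\in\sum_i(0:_M\p_i)_{*(R/\p_i)}$, as needed.

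I expect the main obstacle to be the inclusion ``$\subseteq$'', and within it the requirement that the pieces of $z$ land in the \emph{full} tight interior over each $R/\p_i$, not merely in a set of the shape $N_*[c_i,1]$. That is precisely what forces the delicate choice of a conductor element $c$ whose reduction modulo each minimal prime is again a test element, and it is why one needs both the existence of big test elements on the $R/\p_i$ and the nonemptiness of $\bc\cap R^\circ$. Checking that a single $c$ has all of these properties, and that the idempotent decomposition of $\ov R^{1/q}$ is compatible with the $R$-module structure (so the $\theta_{e,i}$ are genuinely $R/\p_i$-linear with image in $(0:_M\p_i)$), are the points that need care; the remaining manipulations mirror the ``modulo minimal primes'' argument in tight closure theory (cf.\ the dictionary of Remark~\ref{rem.FormalDuality}).
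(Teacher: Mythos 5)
Your proof is correct and follows essentially the same route as the paper's: reduce to the reduced case via Proposition~\ref{pr:red}, prove ``$\supseteq$'' by precomposing with ${}^eR\onto{}^e(R/\p_i)$, and prove ``$\subseteq$'' by factoring through $\bigoplus_i R/\p_i$ using a conductor element whose images modulo each $\p_i$ are co-test elements (your maps $\theta_{e,i}$ are exactly the paper's $g_e\circ\beta^{(e)}\circ\gamma_j^{(e)}$). The only difference is cosmetic: the paper produces the special element $c$ as a high power of a single $c'$ inverting which makes $R$ and all $R/\p_i$ regular, whereas you assemble it as $d\cdot(c^{(1)},\dots,c^{(n)})$ from a conductor element and test elements on the factors; both choices do the same job.
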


\begin{proof}
By Proposition~\ref{pr:red}, we may immediately assume that $R$ is reduced.

Next, we show that $(0 :_M \p_i)_{*(R/\p_i)} \subseteq M_{*R}$ for all $i$.  Let $\p=\p_i$ be a minimal prime and pick $z \in (0 :_M \p)_{*(R/\p)}$.  Let $c\in R^\circ$ and $e_0 \geq 0$.  Then in particular $c\in R \setminus \p$, which means that $\bar{c} \in (R/\p)^\circ$.  So there exists $e_1\geq e_0$ such that there are $(R/\p)$-linear maps $g_e: {}^e(R/\p) \ra (0:_M \p)$ for $e_0 \leq e\leq e_1$ where $z = \sum_{e=e_0}^{e_1} g_e({}^e\bar{c})$.  Consider the compositions $k_e$: \[
{}^eR \onto {}^e(R/\p) \arrow{g_e} (0 :_M \p) \hookrightarrow M.
\]
Each $k_e$ is $R$-linear and $\sum_{e=e_0}^{e_1} k_e({}^ec) = z$.  Since $c$ and $e_0$ were arbitrary, $z\in M_{*R}$.

Conversely, let $z\in M_{*R}$.  Since $R$ is reduced, we may let $c$ be an element of the conductor of $R$ that is a big test element of $R$, such that the image $\bar{c}_i$ of $c$ in $R/\p_i$ is a big test element of $R/\p_i$ for all $i$.  To find such a $c$, fix any $c' \in R^{\circ}$ such that both $R$ and each $R/\bp_i$ are regular after inverting $c'$.  We may then take $c$ to be a sufficiently large power of $c'$.  We will use the fact (Theorem~\ref{thm:cotestint}) that big test elements and co-test elements coincide.  Since $c^2$ is a co-test element of $R$, there is some $e_1\geq 0$ such that there exist $R$-linear maps $g_e: {}^eR \ra M$ such that $z = \sum_{e=0}^{e_1} g_e({}^e(c^2))$.  Let $\alpha: R \hookrightarrow \bigoplus_{i=1}^n (R/\p_i)$ be the canonical inclusion map.  Let $\beta: \bigoplus_{i=1}^n (R/\p_i) \ra R$ be the map induced by multiplication by the conductor element $c$, considering $\bigoplus_{i=1}^n (R/\p_i)$ to be a subring of the normalization of $R$.  Then $\beta \circ \alpha$ is the homothety map given by multiplication with $c$.  For each $j$, let $\gamma_j: R/\p_j \hookrightarrow \bigoplus_{i=1}^n (R/\p_i)$ be the canonical inclusion.  Let $\alpha^{(e)}$, $\beta^{(e)}$, and $\gamma_j^{(e)}$ be the corresponding maps on $p^e$th roots for each $e\leq e_1$.  Then since $\bar{c}_j$ is a co-test element of $R/\p_j$ and $g \circ \beta^{(e)} \circ \gamma_j^{(e)} \in \Hom_R({}^e(R/\p_j), M) \cong \Hom_{R/\p_j}({}^e(R/\p_j), (0:_M \p_j))$ for each $1\leq j\leq n$, we have $z_{j,e} := g(\beta^{(e)}(\gamma_j^{(e)}({}^e\bar{c}_j))) \in (0:_M \p_j)_{*(R/\p_j)}$.  Hence, \begin{align*}
z &=\sum_e g_e({}^e(c^2)) = \sum_e g_e(\beta^{(e)}(\alpha^{(e)}({}^ec))) \\
&= \sum_e g(\beta^{(e)}(\sum_{j=1}^n \gamma_j^{(e)}({}^e\bar{c}_j))) = \sum_j \sum_e z_{j,e} \in \sum_{j=1}^n (0:_M \p_j)_{*(R/\p_j)}.
\end{align*}


\end{proof}

Combined with Proposition~\ref{pr:bigtestint}, this leads to a new description of the big test ideal of certain reduced rings, which has obvious connections to the work of \cite{HochsterHunekeFRegularityTestElementsBaseChange, BravoSmithBehaviorOfTestIdealsUnderSmooth, HaraTakagiOnAGeneralizationOfTestIdeals,SchwedeTuckerTestIdealFiniteMaps,Va-testquot,Tr-diffmon}.  We will return to this issue in Section~\ref{sec:conductor}:

\begin{corollary}
Let $R$ be an $F$-finite reduced Noetherian ring of positive prime characteristic, and let $\{\p_1, \dotsc, \p_n\}$ be the minimal primes of $R$.  Then \[
\taub(R) \subseteq \sum_{i=1}^n (0 : \p_i),
\]
with equality if all of the quotient domains $R/\p_i$ are strongly $F$-regular.
\end{corollary}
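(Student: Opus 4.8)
The plan is to deduce this corollary directly from Proposition~\ref{pr:minprimes} together with Proposition~\ref{pr:bigtestint}. First I would apply Proposition~\ref{pr:bigtestint} to identify $\taub(R)$ with $R_{*R}$, viewing $R$ as a module over itself. Then I would invoke Proposition~\ref{pr:minprimes} with $M = R$ to write
\[
\taub(R) = R_{*R} = \sum_{i=1}^n (0 :_R \p_i)_{*(R/\p_i)}.
\]
Each summand $(0 :_R \p_i)_{*(R/\p_i)}$ is, by the very definition of tight interior, a submodule of $(0 :_R \p_i)$, so $\taub(R) \subseteq \sum_{i=1}^n (0 :_R \p_i)$, which gives the asserted containment.

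For the equality statement under the hypothesis that each $R/\p_i$ is strongly $F$-regular, the key point is that strong $F$-regularity of $R/\p_i$ forces $\taub(R/\p_i) = R/\p_i$. This is essentially the definition (or a standard characterization) of strong $F$-regularity for $F$-finite reduced rings, so I would cite it as such; alternatively one observes that a co-test element generates the unit ideal precisely when the ring is strongly $F$-regular. Now I need to upgrade this to the statement that $(0 :_R \p_i)_{*(R/\p_i)} = (0 :_R \p_i)$, i.e.\ that the tight interior of the $R/\p_i$-module $N_i := (0 :_R \p_i)$ is all of $N_i$. Since $\taub(R/\p_i) = R/\p_i$ means $1$ is a co-test element of $R/\p_i$, Theorem~\ref{thm:cotestint} (or directly the definition of co-test element) tells us $M_{*(R/\p_i)} = M_{*(R/\p_i)}[1,1] = M$ for every $R/\p_i$-module $M$; applying this with $M = N_i$ gives $(N_i)_{*(R/\p_i)} = N_i$. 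Summing over $i$ then yields $\taub(R) = \sum_{i=1}^n (0 :_R \p_i)$.

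I do not expect any serious obstacle here; the corollary is a clean consequence of the two propositions already established. The only point requiring a moment of care is making sure the strong $F$-regularity of $R/\p_i$ is correctly translated into the statement ``$1$ is a co-test element of $R/\p_i$'' — and hence into $N_{*(R/\p_i)} = N$ for all modules $N$ via Theorem~\ref{thm:cotestint} — rather than merely into a statement about $R/\p_i$ itself as a module over itself. Since Proposition~\ref{pr:bigtestint} identifies $\taub(R/\p_i)$ with $(R/\p_i)_{*(R/\p_i)}$ and strong $F$-regularity gives $\taub(R/\p_i) = R/\p_i$, this is immediate. One should also note that $N_i = (0:_R \p_i)$ is genuinely an $R/\p_i$-module (it is annihilated by $\p_i$), so that the expressions $(0:_R\p_i)_{*(R/\p_i)}$ make sense, exactly as in the setup of Proposition~\ref{pr:minprimes}.
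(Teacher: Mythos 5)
Your proposal is correct and follows essentially the same route as the paper: identify $\taub(R)$ with $R_{*R}$ via Proposition~\ref{pr:bigtestint}, decompose via Proposition~\ref{pr:minprimes}, and then use that strong $F$-regularity of each $R/\p_i$ forces every $R/\p_i$-module to equal its tight interior. The only cosmetic difference is that the paper simply cites Proposition~\ref{pr:Fcostrong} ($F$-coregular $\iff$ strongly $F$-regular) for this last point, whereas you re-derive it by observing that $1$ is a co-test element via Theorem~\ref{thm:cotestint}; both are valid.
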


\begin{proof}
$R_{*R} = \sum_{i=1}^n (0 : \p_i)_{*(R/\p_i)} \subseteq \sum_{i=1}^n (0: \p_i)$, with equality if each of these modules equals its \tint, which follows (by Proposition~\ref{pr:Fcostrong}) if $R$ is strongly $F$-regular.
\end{proof}

We now discuss a transformation rule for \tint\ under a flat morphism that sends a test element to a test element.  We can think of this as a reverse sort of persistence.

\begin{prop}\label{pr:flatbc}
Let $\phi:R \ra S$ be a flat homomorphism of reduced $F$-finite rings such that there is some $c\in R$ which is a big test element for $R$ and that $\phi(c)$ is a big test element for $S$.  Then for any $R$-module $M$, we have:
\begin{itemize}
\item[(i)] $(S \otimes_R M)_{*S} \subseteq S \otimes_R (M_{*R})$ as $S$-submodules of $S \otimes_R M$.
\item[(ii)]  If additionally we assume that for any power $q=p^e$ of $p$ the natural $S$-module map $S \otimes_R R^{1/q} \ra S^{1/q}$ is an isomorphism, then $S \otimes_R (M_{*R}) = (S \otimes_R M)_{*S}$. 
\end{itemize}
\end{prop}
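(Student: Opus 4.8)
The plan is to work with the explicit module $M_*[c,q_0]$ for the test element $c$ in question, since by hypothesis (and Theorem~\ref{thm:cotestint}) $c$ is a co-test element for $R$ and $\phi(c)$ is a co-test element for $S$; hence $M_{*R} = M_*[c,1]$ and $(S\otimes_R M)_{*S} = (S\otimes_R M)_*[\phi(c),1]$, and we never have to intersect over varying $d$ and $q_0$. For part (i), I would take a generator of $(S\otimes_R M)_{*S}$: an element of the form $\sum_e g_e({}^e\phi(c))$ with $g_e \in \Hom_S({}^e S, S\otimes_R M)$. The goal is to show this lies in the image of $S\otimes_R M_{*R} \to S\otimes_R M$. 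The essential tool is flat base change for $\Hom$: since $R \to S$ is flat and ${}^eR$ is a finitely presented $R$-module (here $R_\red = R$ is $F$-finite, so ${}^eR$ is module-finite over $R$, hence finitely presented), the natural map
\[
S \otimes_R \Hom_R({}^e R, M) \longrightarrow \Hom_S(S \otimes_R {}^e R, S\otimes_R M)
\]
is an isomorphism. There is also a canonical $S$-linear map ${}^e R \otimes_R S \to {}^e S$ (coming from $R \to S$); composing, every $S$-linear map ${}^e S \to S\otimes_R M$ restricts to an element of $\Hom_S(S\otimes_R {}^eR, S\otimes_R M) \cong S\otimes_R \Hom_R({}^eR,M)$. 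Evaluating the latter at ${}^ec$ lands, by definition, in $S \otimes_R \im(\Hom_R({}^eR,M)\to M) \subseteq S\otimes_R M_*[c,1] = S\otimes_R M_{*R}$; chasing the evaluation-at-${}^e\phi(c)$ through the identifications shows $g_e({}^e\phi(c))$ is exactly this image. Summing over $e$ gives (i).

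For part (ii), we need the reverse containment $S\otimes_R M_{*R} \subseteq (S\otimes_R M)_{*S}$. By Lemma~\ref{lem.ModuleMapsRespectInterior} applied appropriately (or directly from the definitions), it suffices to show $\phi(g({}^ec))$-type generators survive: take $g\in \Hom_R({}^eR,M)$ and form $1\otimes g({}^ec) \in S\otimes_R M$. Under the extra hypothesis that $S\otimes_R R^{1/q}\xrightarrow{\sim} S^{1/q}$, i.e. ${}^eR\otimes_R S \cong {}^eS$ as $S$-modules (compatibly with the $R$-structures), the map $S\otimes_R g$ gives an honest element $\tilde g \in \Hom_S({}^eS, S\otimes_R M)$, and tracing definitions $\tilde g({}^e\phi(c)) = 1\otimes g({}^ec)$. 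Since $S\otimes_R M_{*R}$ is generated over $S$ by such elements (with $e\geq 0$ arbitrary and $c$ a co-test element for $R$), and each lies in $(S\otimes_R M)_*[\phi(c),1] = (S\otimes_R M)_{*S}$, we are done. Combined with (i), this gives equality.

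The main obstacle I anticipate is being careful with the bimodule bookkeeping: ${}^eR$ carries two commuting $R$-actions, and one must confirm that the flat base change isomorphism for $\Hom$ is taken with respect to the \emph{correct} (the ``external,'' target-side) $R$-action, while the evaluation at ${}^ec$ uses the ``internal'' action — and that the canonical comparison map ${}^eR\otimes_R S \to {}^eS$ is $S$-linear for the external action and intertwines the internal actions via $\phi$. Once these compatibilities are pinned down, both containments are essentially formal from flat base change for $\Hom$ applied to the finitely presented module ${}^eR$ together with exactness of $S\otimes_R -$; the extra hypothesis in (ii) is precisely what upgrades the one-sided comparison map to an isomorphism so that maps out of ${}^eS$ and maps out of $S\otimes_R {}^eR$ coincide, closing the loop.
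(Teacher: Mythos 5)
Your proposal is correct and takes essentially the same route as the paper: for (i) the paper precomposes each $g_e$ with the canonical map $j_e : S\otimes_R R^{1/q}\to S^{1/q}$ and uses the flat base change isomorphism $\Hom_S(S\otimes_R R^{1/q}, S\otimes_R M)\cong S\otimes_R\Hom_R(R^{1/q},M)$ (valid since $R^{1/q}$ is finitely presented), then evaluates at $1\otimes c^{1/q}$; for (ii) it pushes forward $1\otimes h_e$ through the same isomorphism, using the extra hypothesis to identify $S\otimes_R R^{1/q}$ with $S^{1/q}$. Your explicit appeal to Theorem~\ref{thm:cotestint} to reduce to the single modules $M_*[c,1]$ and $(S\otimes_R M)_*[\phi(c),1]$ is exactly the reduction the paper uses implicitly.
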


\begin{proof}
First we prove (i).  
For any $\alpha \in (S \otimes_RM)_{*S}$ there exist $S$-linear maps $g_e: S^{1/q} \ra S \otimes_R M$ with $\alpha = \sum_{e=0}^{e_1} g_e(\phi(c)^{1/q})$.  Let $j_e: S \otimes_R R^{1/q} \rightarrow S^{1/q}$ be the natural map given by $s \otimes r^{1/q} \mapsto s \phi(r)^{1/q}$.  Then $\sum_e(g_e \circ j_e)(1 \otimes c^{1/q}) = \sum_e g_e(\phi(c)^{1/q}) = \alpha$.

But $g_e\circ j_e \in \Hom_S(S \otimes_R R^{1/q}, S \otimes_R M) \cong S \otimes_R \Hom_R(R^{1/q}, M)$, where the isomorphism holds because $R$ is $F$-finite and $S$ is a $R$-flat.  Thus there exist $n \in \N$, $s_{i,e} \in S$ and $h_{i,e} \in \Hom_R(R^{1/q}, M)$ such that $g_e \circ j_e \simeq \sum_{i=1}^n s_{i,e} \otimes h_{i,e}$.  Since each $h_i(c^{1/q}) \in M_{*R}$, we have \[
\alpha = \sum_e (g_e \circ j_e)(c^{1/q}) = \sum_i \sum_e s_{i,e} \otimes h_{i,e}(c^{1/q}) \in S \otimes_R M_{*R},
\]
as required.

For (ii) pick $z \in M_{*R}$.  There exist $h_e \in \Hom_R(R^{1/q}, M)$ with $z = \sum_e h_e(c^{1/q})$.  Now
\[
1 \otimes h_e \in S \otimes_R \Hom_R(R^{1/q}, M) \cong \Hom_S(S \otimes_R R^{1/q}, S \otimes_R M) \cong \Hom_S(S^{1/q}, S \otimes_R M).
\]
where the last isomorphism holds by assumption.
Hence, each $(1 \otimes h_e)(\phi(c)^{1/q}) \in (S \otimes_R M)_{*S}$, so that their sum $1 \otimes z \in (S \otimes_R M)_{*S}$.
\end{proof}

Note that the condition in (ii) above is automatically satisfied if $R \to S$ is \'etale.  Indeed, variants of the above in the context of test ideals was explored extensively in \cite{BravoSmithBehaviorOfTestIdealsUnderSmooth}.  In particular, one might expect a number of improvements to Proposition \ref{pr:flatbc} following the ideas of \cite{BravoSmithBehaviorOfTestIdealsUnderSmooth}, also see \cite[Theorem 3.3]{HaraTakagiOnAGeneralizationOfTestIdeals}.

In particular, we get corresponding results regarding localization and completion:

\begin{corollary}\label{cor:loc}
Let $R$ be an $F$-finite reduced ring, and $M$ an $R$-module.  If $R$ is local, then $\hat{R} \otimes_R (M_{*R}) \cong (\hat{R} \otimes_R M)_{*\hat{R}}$.  If $W$ is any multiplicative set, then $W^{-1} (M_{*R}) \cong (W^{-1}M)_{*(W^{-1}R)}$.
\end{corollary}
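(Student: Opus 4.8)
The plan is to derive Corollary~\ref{cor:loc} as a direct application of Proposition~\ref{pr:flatbc}(ii), since both localization and completion at a prime are flat maps, and we already have (from Remark~\ref{rmk:locint}, which in turn quotes \cite{LyubeznikSmithCommutationOfTestIdealWithLocalization,HaraTakagiOnAGeneralizationOfTestIdeals}) the fact that big test elements are preserved under these maps. The key observation is that the hypotheses of Proposition~\ref{pr:flatbc} are met: for $\phi : R \to W^{-1}R$ (resp. $\phi : R \to \hat R$), $R$ and the target are both $F$-finite and reduced, and if $c$ is a big test element of $R$ then, because $\taub$ commutes with localization and completion, $\phi(c)$ is a big test element of the target (it lies in $W^{-1}\taub(R) = \taub(W^{-1}R)$, resp. in $\taub(R)\hat R = \taub(\hat R)$, and it remains in $R^\circ$ since localizing or completing a reduced ring at a prime keeps the image of a non-minimal-prime element from falling into a minimal prime — here one should be slightly careful, but in the local/completion case one simply picks $c$ in the conductor so it survives, exactly as in the proof of Proposition~\ref{pr:minprimes}).

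First I would handle the localization statement. One checks the extra hypothesis of Proposition~\ref{pr:flatbc}(ii): the natural map $W^{-1}R \otimes_R R^{1/q} \to (W^{-1}R)^{1/q}$ is an isomorphism. This is standard — localization commutes with the Frobenius pushforward, since $(W^{-1}R)^{1/q} = (W^{1/q})^{-1} R^{1/q}$ and $W$ and $W^{1/q}$ generate the same localization (every element of $W$ is a $q$-th power times a unit of the localized ring, or more simply $w = (w^{1/q})^q$ so inverting $w$ and inverting $w^{1/q}$ agree). Then Proposition~\ref{pr:flatbc}(ii) gives $W^{-1}R \otimes_R (M_{*R}) = (W^{-1}R \otimes_R M)_{*(W^{-1}R)}$, which is exactly the claimed $W^{-1}(M_{*R}) \cong (W^{-1}M)_{*(W^{-1}R)}$ after identifying $W^{-1}R \otimes_R M = W^{-1}M$.

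Next I would do the completion statement analogously. For $R$ local $F$-finite reduced, $\hat R$ is again reduced (this is where $F$-finiteness, hence excellence, matters) and $F$-finite. The extra hypothesis, that $\hat R \otimes_R R^{1/q} \to (\hat R)^{1/q}$ is an isomorphism, is the well-known fact that for $F$-finite rings completion commutes with Frobenius pushforward — $R^{1/q}$ is a finitely generated $R$-module, so $\hat R \otimes_R R^{1/q}$ is its $\m$-adic completion, and one identifies this with $(\hat R)^{1/q}$ via that $R^{1/q}$ is module-finite and the $\m$-adic and $\m^{1/q}$-adic topologies on it agree up to a bounded shift. Feeding this into Proposition~\ref{pr:flatbc}(ii) yields $\hat R \otimes_R (M_{*R}) \cong (\hat R \otimes_R M)_{*\hat R}$.

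The main obstacle I anticipate is purely bookkeeping rather than conceptual: verifying cleanly that the chosen big test element $c$ of $R$ maps to a big test element of the flat target (staying in $R^\circ$ of the target and remaining a test element), and verifying the Frobenius-base-change isomorphism $S \otimes_R R^{1/q} \xrightarrow{\sim} S^{1/q}$ in the completion case with the right finiteness and topology argument. Both are standard but deserve a sentence each; everything else is an immediate citation of Proposition~\ref{pr:flatbc}(ii) together with Remark~\ref{rmk:locint}.
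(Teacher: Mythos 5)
Your proposal is correct and follows essentially the same route as the paper: the paper's proof also consists of verifying the hypotheses of Proposition~\ref{pr:flatbc}(ii) --- that $W^{-1}R$ (resp.\ $\hat R$) is reduced and $F$-finite, that the natural map $S \otimes_R R^{1/q} \to S^{1/q}$ is an isomorphism (for completion, via the finite generation of $R^{1/q}$ and the inverse limit of the modules $R/(\m^{[q]})^n$), and implicitly that big test elements are preserved via Remark~\ref{rmk:locint} --- and then citing that proposition. The only superfluous bit is your aside about choosing $c$ in the conductor, which is unnecessary: a non-zerodivisor of a reduced excellent ring remains a non-zerodivisor, hence in $(-)^\circ$, after localization or completion by flatness.
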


\begin{proof}
If $R$ is $F$-finite and reduced, then so is $W^{-1}R$, and it is clear that $W^{-1}R \otimes_R R^{1/q} \cong (W^{-1}R)^{1/q}$.  If $R$ is moreover local, then it follows from considering the inverse limit of the $R$-modules $R/(\m^{[q]})^n$ (for fixed $q$ and varying $n$) that $\hat{R}^{1/q} \cong \widehat{(R^{1/q})} \cong \hat{R} \otimes_R R^{1/q}$, where the second isomorphism follows from the fact that $R^{1/q}$ is finitely generated as an $R$-module.  Then the result follows from Proposition~\ref{pr:flatbc}(ii).
\end{proof}

\begin{defn}
We call a ring $R$ \emph{$F$-coregular} if for all $R$-modules $M$, $M=M_*$.
\end{defn}

In particular, the previous Proposition shows that if $R$ is $F$-coregular, so is $R_W$ for all multiplicative sets $W \subseteq R$.  The following is a strong converse:

\begin{prop}\label{pr:global}
Let $R$ be an $F$-finite reduced ring.  Suppose either that $R_\m$ is $F$-coregular for all $\m \in \Max(R)$, or that there is some set $f_1, \dotsc, f_n \in R$ such that $(f_1, \dotsc, f_n) = R$ and $R_{f_i}$ is $F$-coregular for all $1\leq i \leq n$.  Then $R$ is $F$-coregular.
\end{prop}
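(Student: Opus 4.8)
The plan is to prove that $M_* = M$ for every $R$-module $M$ by checking this equality locally, which is legitimate because $M_*$ is a submodule of $M$ and submodule equality can be tested on stalks at maximal ideals (or on a cover by basic opens). The key tool is Corollary \ref{cor:loc}, which tells us that tight interior commutes with localization: for any multiplicative set $W$ we have $W^{-1}(M_{*R}) \cong (W^{-1}M)_{*(W^{-1}R)}$.

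First I would handle the case where $R_\m$ is $F$-coregular for all $\m \in \Max(R)$. Fix an $R$-module $M$ and consider the inclusion $M_{*R} \hookrightarrow M$. For each maximal ideal $\m$, localizing gives $(M_{*R})_\m \cong (M_\m)_{*R_\m}$ by Corollary \ref{cor:loc}; since $R_\m$ is $F$-coregular, the right-hand side equals $M_\m$. Thus the inclusion $(M_{*R})_\m \hookrightarrow M_\m$ is an isomorphism for every $\m$, and therefore $M_{*R} = M$, as the cokernel of an injection is zero iff all its localizations at maximal ideals vanish.

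Second I would treat the case of a finite set $f_1, \dotsc, f_n \in R$ with $(f_1, \dotsc, f_n) = R$ and each $R_{f_i}$ being $F$-coregular. The argument is identical in spirit: applying Corollary \ref{cor:loc} with $W = \{1, f_i, f_i^2, \dotsc\}$ gives $(M_{*R})_{f_i} \cong (M_{f_i})_{*R_{f_i}} = M_{f_i}$, so the map $M_{*R} \to M$ becomes an isomorphism after inverting each $f_i$. Since the ideal $(f_1, \dotsc, f_n)$ is the unit ideal, the opens $D(f_i)$ cover $\Spec R$, and a module map that is an isomorphism on each element of an open cover is an isomorphism; hence $M_{*R} = M$. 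Since $M$ was arbitrary, $R$ is $F$-coregular.

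I do not anticipate a genuine obstacle here: the proposition is essentially the standard "local-to-global" descent once one knows tight interior localizes, and that is exactly the content of Corollary \ref{cor:loc}. The only point requiring a modicum of care is confirming that $F$-finiteness and reducedness pass to the localizations $R_\m$ and $R_{f_i}$ so that the notion of $F$-coregularity (and Corollary \ref{cor:loc}) even applies there — but this is immediate, and is already noted in the proof of Corollary \ref{cor:loc}.
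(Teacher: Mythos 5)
Your argument is correct and is essentially identical to the paper's proof: both apply Corollary \ref{cor:loc} to identify $(M_{*R})_\m$ with $(M_\m)_{*R_\m}$ (resp.\ $(M_{*R})_{f_i}$ with $(M_{f_i})_{*R_{f_i}}$) and then conclude by the local nature of equality of submodules. No issues.
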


\begin{proof}
Let $M$ be an $R$-module.  Then we either have $(M_{*R})_\m = (M_\m)_{*R_\m} = M_\m$ for all $\m \in \Max(R)$, or $(M_{f_i})_{*R_{f_i}} = (M_{*R})_{f_i} = M_{f_i}$ for all $1\leq i \leq n$.  Since equality of modules is a local property, both with respect to localization at points and with respect to open covers, the conclusion follows.
\end{proof}

In the situations dealt with here, however, $F$-coregularity isn't really a new concept.

\begin{prop}\label{pr:Fcostrong}
Let $R$ be a ring such that $R_\red$ is $F$-finite.  Then $R$ is $F$-coregular if and only if it is strongly $F$-regular.
\end{prop}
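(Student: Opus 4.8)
The plan is to reduce to the reduced $F$-finite case and then assemble the propositions already proved in this section into a cycle of implications. First, I would check that each of the two conditions forces $R$ to be reduced. If $R$ is $F$-coregular, apply the defining equality to $M = R$: by Proposition~\ref{pr:red} one gets $R = R_{*R} = (0:_R\n)_{*R_\red} \subseteq (0:_R\n) \subseteq R$, so $(0:_R\n) = R$ and hence $\n = 0$. In the other direction, a strongly $F$-regular ring is always reduced. So we may assume $R$ is a reduced $F$-finite ring, and in particular $R_\red = R$.

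In the reduced case, I would use the standard characterization that an $F$-finite reduced ring $R$ is strongly $F$-regular if and only if $\taub(R) = R$ (\cf \cite{HochsterHunekeTightClosureAndStrongFRegularity}), together with Proposition~\ref{pr:bigtestint}, which identifies $\taub(R) = R_{*R}$. Thus it suffices to establish
\[
R \text{ strongly }F\text{-regular} \iff R_{*R} = R \iff R \text{ is }F\text{-coregular},
\]
where the first equivalence is exactly the content of the two facts just cited, and the implication $(\Leftarrow)$ in the second is immediate on taking $M = R$ in the definition of $F$-coregularity.

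For the remaining implication, suppose $R_{*R} = R$. Then $1 \in R^\circ \cap R_{*R}$, so by Theorem~\ref{thm:cotestint} the element $1$ is a co-test element of $R$; that is, $M_* = M_*[1,1]$ for every $R$-module $M$. Now $M_*[1,1] = \sum_{e \geq 0} \im(\Hom_R({}^eR, M) \to M)$ is a submodule of $M$, while its $e = 0$ summand is the image of the evaluation isomorphism $\Hom_R({}^0R, M) = \Hom_R(R,M) \to M$, $g \mapsto g(1)$, hence is all of $M$. Therefore $M_* = M_*[1,1] = M$ for every $M$, i.e. $R$ is $F$-coregular, which closes the cycle. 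The only input above that is not bookkeeping with definitions and earlier propositions is the classical equivalence ``strongly $F$-regular $\iff \taub(R) = R$'' for reduced $F$-finite rings; this, together with the verification that the $e = 0$ term of $M_*[1,1]$ genuinely recovers $M$ (using that ${}^0R$ is $R$ as a left $R$-module with structure map evaluation at $1$), are the only places I expect to need care.
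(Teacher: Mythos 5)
Your proof is correct. The reduction to the reduced case and the direction ``$F$-coregular $\Rightarrow$ strongly $F$-regular'' are exactly as in the paper: both arguments take $M = R$ and pass through $R = R_* = \taub(R)$ via Proposition~\ref{pr:red} and Proposition~\ref{pr:bigtestint}. Where you genuinely diverge is in the converse. The paper gives a direct, self-contained construction: for a big test element $c$ and any $z \in M$, strong $F$-regularity furnishes a splitting $g \colon R^{1/q} \to R$ with $g(c^{1/q}) = 1$, and composing with $h \colon r \mapsto rz$ exhibits $z$ as $ (h \circ g)(c^{1/q})$, whence $z \in M_*$. You instead route through the co-test element machinery: strong $F$-regularity gives $\taub(R) = R$, hence $R_* = R$, hence $1 \in R^\circ \cap R_*$ is a co-test element by Theorem~\ref{thm:cotestint}, and then $M_* = M_*[1,1] = M$ because the $e=0$ summand of $M_*[1,1]$ is already all of $M$. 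This is valid and non-circular, since Theorem~\ref{thm:cotestint} does not rely on the present proposition. The trade-off: the paper's version needs only the definition of strong $F$-regularity (it uses just the implication $\taub(R) = R \Rightarrow$ strongly $F$-regular, in the forward direction), while yours reuses more of the section's machinery but additionally imports the converse implication ``strongly $F$-regular $\Rightarrow \taub(R) = R$'' as an external classical fact. Your observation about the $e=0$ term of $M_*[1,1]$ is the one place needing care, and you handle it correctly: ${}^0R = R$ and evaluation at $1$ is surjective onto $M$.
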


\begin{proof}
Since both conditions imply the ring is reduced (the former because of Proposition~\ref{pr:red}), we may assume $R$ is an $F$-finite reduced ring.

If $R$ is $F$-coregular, then in particular $R = R_* = \taub(R)$, so that $R$ is also strongly $F$-regular.

Conversely, suppose $R$ is strongly $F$-regular.  Let $c$ be a big test element of $R$ and let $M$ be any $R$-module.  For some $q$ there is an $R$-linear map $g: R^{1/q} \ra R$ sending $c^{1/q} \mapsto 1$.  Let $z \in M$ and let $h: R \ra M$ be the map $r \mapsto rz$.  Then $(h \circ g) \in \Hom_R(R^{1/q}, M)$ and $(h \circ g)(c^{1/q}) = z$, so $z\in M_*$.  Hence $M=M_*$, and since $M$ was arbitrary, $R$ is $F$-coregular.
\end{proof}




\section{Co-persistence, co-contraction, co-colon capturing, and duality with tight closure}

While tight interior is a distinct notion as compared to tight closure, it has many analogous formal properties.  In this section we establish these results.  In somewhat more specialized settings, we also prove that tight interior is dual to tight closure \ref{cor.DualityWithTC}.

Persistence is one of the most important tools in any closure operation.  
  Here we discuss a dual notion in the sense of Remark \ref{rem.FormalDuality}.

\begin{lemma}[Co-persistence, first case]\label{lem:copers1}
Let $j: R \ra S$ be a ring homomorphism between not-necessarily reduced rings, $M$ an $R$-module, and consider $\Hom_R(S,M)$ as an $S$-module.  Assume either:
 \begin{itemize}
 \item that $R$ has a co-test element $c$ whose image in $S$ is not in any minimal prime of $S$, or
 \item that $j(R^\circ) \subseteq S^\circ$.
 \end{itemize}
 Then in the canonical $R$-linear evaluation map $\epsilon: \Hom_R(S,M) \ra M$ given by $\epsilon(g) := g(1)$, we have $\epsilon(\Hom_R(S,M)_{*S}) \subseteq M_{*R}$.  That is, $\epsilon$ restricts to a map $\epsilon': \Hom_R(S,M)_{*S} \ra M_{*R}$.
\end{lemma}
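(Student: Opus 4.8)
The plan is to unwind the definition of tight interior on the $S$-side and push the resulting sums through the evaluation map $\epsilon$, checking that the maps produced live in the right $\Hom$-group over $R$. Concretely, suppose $z = \epsilon(g)$ for some $g \in \Hom_R(S,M)_{*S}$; I want to show $z \in M_{*R}$, i.e.\ that for every $d \in R^\circ$ and every $e_0 \ge 0$, $z$ can be written as a finite sum $\sum \psi_i({}^{e_i}d)$ with $e_i \ge e_0$ and $\psi_i \in \Hom_R({}^{e_i}R, M)$. Fix such a $d$ and $e_0$. The first step is to produce, from $d$, an element of $S^\circ$: under the first hypothesis I would instead work with a co-test element $c$ of $R$ whose image lies in $S^\circ$ (using that $\Hom_R(S,M)_{*S} = \Hom_R(S,M)_{*S}[j(c),1]$ since $j(c)$ is a co-test element on the $S$-side once $c$ is — wait, more carefully: I use that $\Hom_R(S,M)_{*S} = \Hom_R(S,M)_{*S}[d', q']$ for appropriate choices), and under the second hypothesis I simply take $j(d) \in S^\circ$ directly. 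In either case I get: there exist $e_1 \ge e_0$ and $S$-linear maps $\theta_e : {}^eS \to \Hom_R(S,M)$ for $e_0 \le e \le e_1$ with $g = \sum_e \theta_e({}^e j(d))$ (in the second case; in the first case the bookkeeping with $c$ versus $d$ needs the fact, established after Theorem \ref{thm:cotestint}, that products of a co-test element with elements of $R^\circ$ are co-test elements, so that one can absorb $d$).

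The second step is the identification $\Hom_S({}^eS, \Hom_R(S,M)) \cong \Hom_R({}^eS, M)$ via Hom-tensor adjunction (using that ${}^eS$ is an $S$-module and $\epsilon$ is just the counit at level $1$): an $S$-linear map $\theta : {}^eS \to \Hom_R(S,M)$ corresponds to the $R$-linear map ${}^eS \to M$ sending ${}^es \mapsto \theta({}^es)(1)$. Composing with the $R$-linear map ${}^e R \to {}^eS$ induced by $j$ (which sends ${}^er \mapsto {}^e j(r)$), I obtain for each $e$ an $R$-linear map $\psi_e : {}^eR \to M$, and chasing the elements through these canonical identifications gives $\psi_e({}^e d) = \theta_e({}^e j(d))(1)$. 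Summing over $e$ from $e_0$ to $e_1$ yields $\sum_e \psi_e({}^e d) = \sum_e \theta_e({}^e j(d))(1) = g(1) = z$. Since $e_1 \ge e_0$ and $d \in R^\circ$ and $e_0$ were arbitrary, this shows $z \in M_{*R}$.

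The main obstacle I anticipate is the first step — ensuring one actually has a valid "test element'' on the $S$-side to expand against, and matching it up with the arbitrary $d \in R^\circ$ demanded by the definition of $M_{*R}$. Under the hypothesis $j(R^\circ) \subseteq S^\circ$ this is immediate since $j(d) \in S^\circ$ works directly in the definition of $\Hom_R(S,M)_{*S}[j(d), p^{e_0}]$. Under the first hypothesis one must instead argue: $j(c) \in S^\circ$ is a co-test element of $S$ (if $S_\red$ is $F$-finite — but note $S$ need not even be $F$-finite here, so one cannot invoke Theorem \ref{thm:cotestint}; rather one must argue more primitively that membership in $\Hom_R(S,M)_{*S}$ lets one expand against $j(c)$ up to the usual interior, or simply use the definition with $j(c)$ directly as an element of $S^\circ$), and then one still needs to reconcile the fixed $d \in R^\circ$ from the target with $c$; the cleanest route is to expand $g$ against $j(cd) = j(c)j(d) \in S^\circ$ (valid since $j(c), j(d) \notin$ any minimal prime of $S$ by hypothesis on $c$ and... hmm, $j(d)$ need not avoid minimal primes of $S$ under the first hypothesis) — so in fact the first hypothesis forces a slightly different argument where one expands $g$ against $j(c)$, gets $\psi_e({}^e c) \in M$, and then uses that $c$ (being a co-test element of $R$, hence in $R_*$) already witnesses membership for all $d$. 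I expect the write-up to handle the two hypotheses in parallel but with this branch point, and the second hypothesis is the genuinely clean case while the first requires the co-test-element machinery of the previous section.
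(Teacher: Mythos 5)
Your proposal is correct and follows essentially the same route as the paper: expand $g$ against $j(c)$ in $\Hom_R(S,M)_{*S}[j(c),p^{e_0}]$ (using only that $j(c)\in S^\circ$, not that it is a co-test element of $S$), compose each $\phi_e$ with ${}^ej$ and $\epsilon$ to get $R$-linear maps ${}^eR\to M$, and conclude — with $c$ arbitrary in $R^\circ$ under the second hypothesis, and with $c$ the fixed co-test element under the first hypothesis, invoking $M_{*R}[c,1]=M_{*R}$ to finish that case. The branch point you identify (and your rejection of the $j(cd)$ variant) matches exactly how the paper's uniform proof implicitly splits.
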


\begin{proof}
Let $g\in \Hom_R(S,M)_{*S}$.  Let $c$ \emph{either} be a co-test element of $R$ whose $j$-image is in $S^\circ$, \emph{or} if $j(R^\circ) \subseteq S^\circ$ then we let $c\in R^\circ$ be arbitrary.  Let $d=j(c)$ and $e_0 \geq 0$.  Since $j(c) \in S^\circ$, there is some $e_1\geq e_0$ such that there exist $S$-linear maps $\phi_e: {}^eS \ra \Hom_R(S,M)$ such that $g = \sum_{e=e_0}^{e_1} \phi_e({}^ej(c))$.  Then consider the $R$-linear compositions ${}^eR \arrow{{}^ej} {}^eS \arrow{\phi_e} \Hom_R(S,M) \arrow{\epsilon} M$.   We have \[
g(1) = \epsilon(g) = \sum_{e=e_0}^{e_1} (\epsilon \circ \phi_e \circ {}^ej)({}^ec),
\]
showing that $\epsilon(g) \in M_{*R}$, as required.
\end{proof}


The next result is dual in the sense of Remark \ref{rem.FormalDuality} to the fact that tight closure captures contractions from module finite extensions.

\begin{prop}[Co-contraction]\label{pr:cocontract}
Let $j: R \hookrightarrow S$ be a module-finite torsion-free\footnote{A map of rings $j : R \hookrightarrow S$ is called \emph{torsion-free} if $S$ is a torsion-free $R$-module.} inclusion, where $R$ is an $F$-finite domain and $S_{\red}$ is $F$-finite.  Let $M$ be an $R$-module.  Then the restricted evaluation map $\epsilon': \Hom_R(S,M)_{*S} \ra M_{*R}$ of Lemma~\ref{lem:copers1} is surjective.
\end{prop}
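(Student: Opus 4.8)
The plan is to produce an explicit $S$-linear splitting-type element. Since $j : R \hookrightarrow S$ is module-finite and torsion-free and $R$ is a domain, the extension $R \hookrightarrow S$ becomes an isomorphism after inverting a suitable nonzero $c \in R$; equivalently, there is $c \in R^\circ$ and an $R$-linear map $\pi : S \to R$ with $\pi \circ j = c \cdot \mathrm{id}_R$ (a "field-trace up to a common denominator" argument, using that $S \otimes_R \mathrm{Frac}(R)$ is a finite $\mathrm{Frac}(R)$-algebra containing $\mathrm{Frac}(R)$ as a direct summand, then clearing denominators). I may further enlarge $c$ to a power so that $c$ is also a big test element of $R$, which exists by \cite[Theorem 3.4]{HochsterHunekeTightClosureAndStrongFRegularity} and is a co-test element by Theorem~\ref{thm:cotestint}; and since $R$ is a domain, $j(c) \in S^\circ$, so Lemma~\ref{lem:copers1} applies and $\epsilon'$ is defined.

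Now given $z \in M_{*R}$, I want to exhibit $g \in \Hom_R(S,M)_{*S}$ with $\epsilon'(g) = g(1) = z$. First, since $c$ is a co-test element of $R$, we have $z \in M_{*R}[c,1]$, so there are $R$-linear maps $h_e : {}^eR \to M$ with $z = \sum_e h_e({}^ec)$. Consider the composite $\widetilde h_e : {}^eS \twoheadleftarrow$ ... more precisely, using the $R$-linear map $\pi : S \to R$ with $\pi j = c\cdot\mathrm{id}$, define for each relevant $e$ the $R$-linear map ${}^e\pi : {}^eS \to {}^eR$ (apply ${}^e(-)$ to $\pi$) and then $h_e \circ {}^e\pi : {}^eS \to M$. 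The idea is to use the Frobenius-twisted $\pi$ to push the expression for $z$ from ${}^eR$ up to ${}^eS$, producing an element of $\Hom_R(S,M)_{*S}$ whose evaluation at $1$ recovers $z$ up to a controlled correction by powers of $c$; since $c$ is a co-test element, such corrections are harmless. Concretely: I expect that the element $g \in \Hom_R(S,M)$ defined by $g(s) = \sum_e h_e\big({}^e(\pi(s))\big)$ (interpreting ${}^e(\pi(s)) \in {}^eR$ appropriately, i.e. $g(s)$ built from the maps $h_e \circ {}^e\pi$ evaluated so that $g(1)$ reproduces $\sum_e h_e({}^e c) = z$, using $\pi(1)$-type data) lies in $\Hom_R(S,M)_{*S}$, because the maps $S^{1/q} \to \Hom_R(S,M)$ assembled from composing the $\pi$-twists with the $h_e$ witness membership of $g$ in $\Hom_R(S,M)_{*S}[j(c),1]$, and $j(c) \in S^\circ$.

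The main obstacle I anticipate is bookkeeping the adjunction $\Hom_R({}^eS, M) \cong \Hom_R(S, \Hom_R({}^eR,M))$-style identifications carefully enough to see that the constructed $g$ genuinely lies in the $S$-module $\Hom_R(S,M)_{*S}$ — i.e. that I can write $g = \sum_e \psi_e({}^e j(c))$ for $S$-linear $\psi_e : {}^eS \to \Hom_R(S,M)$ — rather than merely lying in $\Hom_R(S,M)$ with the right evaluation. This is where the torsion-free/module-finite hypothesis and the existence of the trace-type map $\pi$ do the real work: $\pi$ is exactly what converts an $R$-linear datum on the ${}^eR$ side into the $S$-linear twisted maps needed on the ${}^eS$ side, and the co-test element property of $c$ absorbs the discrepancy between "$\pi j = c$" and "$\pi j = 1$." Once the splitting $\pi$ is in hand and its Frobenius twists are tracked, surjectivity of $\epsilon'$ follows formally.
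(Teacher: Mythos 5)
Your core idea in the domain case --- produce a trace-type map $\pi : S \ra R$ with $\pi(1) = c \neq 0$, twist it by Frobenius, and use the adjunction $\Hom_R({}^eS,M) \cong \Hom_S({}^eS, \Hom_R(S,M))$ to convert the $R$-linear data witnessing $z \in M_{*R}$ into $S$-linear maps ${}^eS \ra \Hom_R(S,M)$ --- is exactly the mechanism of the paper's proof, and the discrepancy between $\pi(1)=c$ and $\pi(1)=1$ really is harmless (the paper absorbs it by writing $z = \sum_e g_e({}^e(dc))$ with $d = \pi(1)$, which uses no co-test property at all, only the definition of $M_{*R}$ applied to the element $dc \in R^\circ$). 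But there is a genuine gap at the last step, and you in fact state it yourself: your construction only shows that the assembled element lies in $\Hom_R(S,M)_{*S}[j(c),1]$. Since $N_{*S}$ is by definition the \emph{intersection} of the modules $N_{*S}[d,q_0]$ over all $d \in S^\circ$ and all $q_0$, membership in the single term $N_{*S}[j(c),1]$ does not yield membership in $N_{*S}$ unless $j(c)$ is a co-test element \emph{of $S$}. You arrange for $c$ to be a big test element of $R$, but that is not what is needed; the paper instead chooses $c$ so that it is simultaneously a trace value and such that $j(c)$ is a big test element of $S$ (possible when $S$ is a domain by taking $c$ with $R_c$ and $S_c$ regular and invoking Theorem~\ref{thm:cotestint}).

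Second, your argument does not address the case where $S$ fails to be a domain, or is even non-reduced, which the hypotheses allow since only $S_\red$ is assumed $F$-finite. There the existence of an element of $R$ mapping to a co-test element of $S$ is not available from the theory as developed, and the paper spends the second half of its proof on a reduction: it finds a minimal prime $P_i$ of $S$ with $R \into S/P_i$ injective, applies the domain case to that extension, and then uses $\Hom_R(S/P_i,M) \cong (0:_{\Hom_R(S,M)} P_i)$ together with Proposition~\ref{pr:minprimes} to see that the resulting surjection $\Hom_R(S/P_i,M)_{*(S/P_i)} \onto M_{*R}$ factors through $\epsilon'$, forcing $\epsilon'$ to be surjective. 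Some such reduction is needed to complete your argument. Finally, a small point: your formula $g(s) = \sum_e h_e({}^e(\pi(s)))$ is not $R$-linear in $s$ (the $h_e$ are only ${}^e$-twisted linear); the correct assembly is $s^{1/q} \mapsto \bigl(t \mapsto g_e(f^{1/q}(t\cdot s^{1/q}))\bigr)$, which is the $S$-linear map ${}^eS \ra \Hom_R(S,M)$ you are after.
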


\begin{proof}
First we prove the proposition under the added hypothesis that $S$ is a domain.  Let $c\in R^\circ$ be a big test element shared by $R$ and $S$, the existence of which follows immediately from the fact that there is a $c \in R^{\circ} \cap S^{\circ}$ such that $R_c$ and $S_c$ are both regular.
Clearly then Lemma~\ref{lem:copers1} applies, so we get the map $\epsilon'$.  For surjectivity, let $z\in M_{*R}$.  There is a nonzero $R$-linear map $f: S \ra R$.  Let $d = f(1)$.  Then for some $e_1\geq 0$, there exist $R$-linear maps $g_e: R^{1/q} \ra M$ for $0 \leq e \leq e_1$ such that $\sum_{e=0}^{e_1} g_e((dc)^{1/q}) = z$.   Then $\sum_e (g_e \circ f^{1/q})(c^{1/q}) = z$.  Define $h_e: S^{1/q} \ra \Hom_R(S,M)$ by $h_e(s^{1/q})(t) := (g \circ f^{1/q})(t \cdot s^{1/q})$.  Then $h_e$ is $S$-linear, and letting $j_e := h_e(c^{1/q})$, we have $\epsilon(\sum_e j_e) = \sum_e j_e(1) = z$, and moreover each $j_e \in \Hom_R(S,M)_{*S}$ by construction since $c$ is a big test element for $S$. Thus $\epsilon'$ is surjective.

In the general case, we need first to establish that $\epsilon'$ exists.  Let $0\neq x\in R$.  Since $S$ is torsion-free over $R$, $j(x)$ is a non-zerodivisor of $S$, so that in particular it avoids the minimal primes of $S$, and Lemma~\ref{lem:copers1} applies to show the existence of $\epsilon'$.

Now let $P_1, \dotsc, P_n$ be the minimal primes of $S$.  Consider the maps $j_i: R \ra S/P_i$ given by composing $j$ with the natural projection $S \onto S/P_i$.  We have $\prod_i \ker j_i \subseteq \bigcap_i \ker j_i = \ker j = 0$, so that since $R$ is a domain, $\ker j_i = 0$ for some $i$.  Thus, by the domain case of the current proposition, the evaluation-at-1 map $\epsilon_i: \Hom_R(S/P_i, M) \ra M$ restricts to a surjective map $\epsilon_i': \Hom_R(S/P_i, M)_{*(S/P_i)} \onto M_{*R}$.

However, we have $\Hom_R(S/P_i, M) \cong \Hom_S(S/P_i, \Hom_R(S, M)) \cong (0 :_{\Hom_R(S,M)} P_i)$, so that by Proposition~\ref{pr:minprimes} and the above, $\epsilon_i'$ factors as $\Hom_R(S/P_i, M)_{*(S/P_i)} \hookrightarrow \Hom_R(S,M)_{*S} \arrow{\epsilon'} M_{*R}$.  Since the composition is surjective, it follows that $\epsilon'$ must be surjective.
\end{proof}

\begin{lemma}[Co-persistence, second case]\label{lem:copers2}
Suppose $R$ is an $F$-finite domain and $Q$ is a height one prime, and $M$ any $R$-module.  Then the evaluation map restricts as in Lemma~\ref{lem:copers1} when $S = R/Q$.
\end{lemma}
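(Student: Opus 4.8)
The plan is to reduce, via the normalization, to the co-persistence already established in Lemma~\ref{lem:copers1}. The obstruction to applying Lemma~\ref{lem:copers1} directly to the quotient map $R \onto R/Q$ is that $\taub(R)$ may be contained in $Q$ -- this genuinely occurs when $Q$ meets the non-strongly-$F$-regular locus, which for non-normal $R$ can happen in codimension one -- so $R$ need not have a co-test element surviving in $R/Q$ and neither clause of the hypothesis of Lemma~\ref{lem:copers1} is available. The role of the height-one hypothesis is that this failure disappears after passing to the normalization: let $R^{\textnormal N}$ be the normalization of $R$ (a normal, hence reduced, $F$-finite domain, module-finite over $R$), and let $Q' \subseteq R^{\textnormal N}$ be a prime lying over $Q$. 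A standard lying-over/incomparability argument for the integral extension $R \hookrightarrow R^{\textnormal N}$ forces $\height Q' = 1$ (any prime strictly between $0$ and $Q'$ would contract to a prime strictly between $0$ and $Q$), so $R^{\textnormal N}_{Q'}$ is a discrete valuation ring, in particular strongly $F$-regular; hence $\taub(R^{\textnormal N})_{Q'} = R^{\textnormal N}_{Q'}$ by Remark~\ref{rmk:locint}, i.e.\ $\taub(R^{\textnormal N}) \not\subseteq Q'$. Thus $R^{\textnormal N}$ does have a co-test element (any element of $\taub(R^{\textnormal N}) \cap (R^{\textnormal N})^{\circ}$) that survives in $R^{\textnormal N}/Q'$.

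The argument then runs along the diagram $R \hookrightarrow R^{\textnormal N} \onto R^{\textnormal N}/Q' \hookleftarrow R/Q$. First I would record the inputs: $R^{\textnormal N}$ is a module-finite torsion-free extension of the $F$-finite domain $R$ with $R^{\textnormal N}$ reduced and $F$-finite, and $R^{\textnormal N}/Q'$ is a module-finite torsion-free extension of the $F$-finite domain $R/Q$ with $R^{\textnormal N}/Q'$ a domain, so Proposition~\ref{pr:cocontract} (co-contraction) applies to both inclusions. Next, by Hom-tensor adjunction along $R \hookrightarrow R^{\textnormal N}$ and along $R \onto R/Q$ one identifies $\Hom_R(R^{\textnormal N}/Q', M) \cong \Hom_{R^{\textnormal N}}\!\big(R^{\textnormal N}/Q', \Hom_R(R^{\textnormal N}, M)\big)$ and $\Hom_R(R^{\textnormal N}/Q', M) \cong \Hom_{R/Q}\!\big(R^{\textnormal N}/Q', \Hom_R(R/Q, M)\big)$, and one checks that under these isomorphisms the relevant evaluation-at-$1$ maps all correspond. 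These identifications are routine, but the bookkeeping of which ring each tight interior is computed over is exactly the point where care is needed.

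Now, given $g \in \Hom_R(R/Q, M)_{*(R/Q)}$: co-contraction (Proposition~\ref{pr:cocontract}) for $R/Q \hookrightarrow R^{\textnormal N}/Q'$, transported through the second adjunction above, produces $H \in \Hom_R(R^{\textnormal N}/Q', M)_{*(R^{\textnormal N}/Q')}$ with $\epsilon(g) = H(1)$. Since $R^{\textnormal N}$ has a co-test element not lying in $Q'$, Lemma~\ref{lem:copers1} applies to $R^{\textnormal N} \onto R^{\textnormal N}/Q'$ with module $\Hom_R(R^{\textnormal N}, M)$ and shows that evaluation carries $\Hom_{R^{\textnormal N}}\!\big(R^{\textnormal N}/Q', \Hom_R(R^{\textnormal N}, M)\big)_{*(R^{\textnormal N}/Q')}$ into $\big(\Hom_R(R^{\textnormal N}, M)\big)_{*R^{\textnormal N}}$; via the first adjunction this places the element corresponding to $H$ into $\big(\Hom_R(R^{\textnormal N}, M)\big)_{*R^{\textnormal N}}$. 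Finally co-contraction for $R \hookrightarrow R^{\textnormal N}$ carries $\big(\Hom_R(R^{\textnormal N}, M)\big)_{*R^{\textnormal N}}$ into $M_{*R}$ under evaluation, and chasing the element $1$ through the adjunction isomorphisms gives $\epsilon(g) = H(1) \in M_{*R}$, which is exactly the assertion that $\epsilon$ restricts to a map $\Hom_R(R/Q, M)_{*(R/Q)} \to M_{*R}$.

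The main obstacle is conceptual rather than computational: recognizing that the height-one hypothesis is precisely what makes $R^{\textnormal N}$ regular at the chosen prime $Q'$, so that the co-test element missing over $R$ reappears over $R^{\textnormal N}$, and then assembling co-contraction at both ends of the diagram to transport the statement across it. Beyond that, the only delicate steps are the adjunction identifications and keeping straight which ring each $(-)_*$ is taken over.
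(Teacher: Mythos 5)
Your proof is correct and follows essentially the same route as the paper: normalize, choose $Q'$ over $Q$ (necessarily of height one), observe that normality of $R^{\textnormal N}$ gives a big test element of $R^{\textnormal N}$ avoiding $Q'$ so that Lemma~\ref{lem:copers1} applies to $R^{\textnormal N}\onto R^{\textnormal N}/Q'$, and then chase the square $R\into R^{\textnormal N}\onto R^{\textnormal N}/Q'\hookleftarrow R/Q$ using co-contraction along $R/Q\into R^{\textnormal N}/Q'$ to lift $g$ and co-persistence on the other two maps to land in $M_{*R}$. The only (immaterial) differences are that you justify $\taub(R^{\textnormal N})\not\subseteq Q'$ by localizing at $Q'$ rather than by citing codimension of the singular locus, and you invoke Proposition~\ref{pr:cocontract} where the paper cites Lemma~\ref{lem:copers1} for the map $R\into R^{\textnormal N}$.
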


\begin{proof}
Let $R'$ be the normalization of $R$, let $Q'$ be a prime of $R'$ that lies over $Q$, and $T := R'/Q'$.  Then there is a big test element $c$ of $R'$ that is not in $Q'$, indeed, the big test ideal is not contained in any height-one prime since $R$ is normal and thus the singular locus of $\Spec R$ is of codimension $\geq 2$ (note the big test ideal cuts out a scheme that is trivial wherever $R$ is regular).
Then the map $R' \onto T$ satisfies the conditions of Lemma~\ref{lem:copers1}.  Moreover, the map $S \ra T$ is an injective module-finite inclusion of domains.  So the evaluation maps $\alpha: \Hom_R(R',M) \ra M$ and $\beta: \Hom_R(T,M) \ra \Hom_R(R',M)$ restrict to maps on the $(-)_*$ level (by Lemma~\ref{lem:copers1}), and the evaluation map
\[
\gamma: \Hom_{S}(T, \Hom_R(S, M)) \cong \Hom_R(S \tensor_{S} T, M) \cong \Hom_R(T,M) \ra \Hom_R(S,M)
\]
restricts to a surjection on the $(-)_*$ level (by Proposition~\ref{pr:cocontract}).  To see that the evaluation map $\delta: \Hom_R(S, M) \ra M$ restricts to a map on the $(-)_*$ level, consider the following commutative diagram: \[ \xymatrix{
\Hom_R(T, M) \ar[r]^\gamma \ar[d]^\beta & \Hom_R(S,M) \ar[d]^\delta \\
\Hom_R(R',M) \ar[r]^\alpha & M
} \]
Let $g\in \Hom_R(S,M)_{*S}$.  Then $g = \gamma(h)$ for some $h\in \Hom_R(T,M)_{*T}$.  We have $\delta(g) = \delta(\gamma(h)) = \alpha(\beta(h)) \in M_{*R}$, since both $\alpha$ and $\beta$ restrict to maps on the $(-)_*$ level.
\end{proof}

\begin{thm}[Co-persistence]\label{thm:copers}
Let $j: R \ra S$ be a ring homomorphism such that both $R_\red$ and $S_\red$ are $F$-finite.  Let $M$ be an $R$-module.  Then the natural evaluation map $\epsilon: \Hom_R(S,M) \ra M$ restricts to the $(-)_*$ level.  (That is \emph{co-persistence} holds at this level of generality for \tint s of modules.)
\end{thm}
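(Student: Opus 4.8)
The plan is to reduce the general statement of Theorem~\ref{thm:copers} to the cases already handled, namely Lemma~\ref{lem:copers1} (co-persistence when a test element survives, or when $j(R^\circ)\subseteq S^\circ$), Lemma~\ref{lem:copers2} (the height-one quotient of a domain), and Proposition~\ref{pr:cocontract} (module-finite torsion-free inclusions of a domain), by factoring an arbitrary ring homomorphism into a composite of maps each of which falls into one of these cases. First I would use Proposition~\ref{pr:red} to reduce to the case where both $R$ and $S$ are reduced: indeed, if $\n_R$ and $\n_S$ denote the nilradicals, then $\Hom_R(S,M)_{*S} = (0:_{\Hom_R(S,M)}\n_S)_{*S_\red}$ and $(0:_{\Hom_R(S,M)}\n_S)\cong \Hom_{R_\red}(S_\red, (0:_M\n_R))$, so replacing $R,S,M$ by $R_\red, S_\red, (0:_M\n_R)$ and chasing the canonical identifications reduces us to the reduced case. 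Once everything is reduced, the strategy is to break $j\colon R\to S$ into elementary pieces.

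The factorization I would use proceeds through a chain of standard maps. First, $R\to R_\red$ is handled (it is surjective, and one checks directly or via Proposition~\ref{pr:red} that evaluation-at-$1$ descends). Then one reduces modulo minimal primes: writing $\p_1,\dots,\p_n$ for the minimal primes of $R_\red$, Proposition~\ref{pr:minprimes} expresses $M_{*R}$ as $\sum_i (0:_M\p_i)_{*(R/\p_i)}$, and there is a compatible decomposition of $\Hom_R(S,M)_{*S}$ coming from the minimal primes of $S$ lying over each $\p_i$; so it suffices to treat the case where both $R$ and $S$ are domains and $R\to S$ is either an inclusion or the zero-through-a-domain situation is moot. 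For a map of domains $R\to S$ one factors through the image: $R\onto R/\ker j \hookrightarrow S$. The surjection $R\onto R/\p$ onto a domain quotient is handled by composing the quotients by minimal primes with a chain of height-one prime quotients as in Lemma~\ref{lem:copers2} (descending through a saturated chain $\p_0\subset\p_1\subset\cdots$ of primes, each successive quotient being a height-one prime in the previous domain, so Lemma~\ref{lem:copers2} applies at each step and one composes the restricted evaluation maps). Finally, an injection of domains $A\hookrightarrow S$ is factored as $A\hookrightarrow B\hookrightarrow S$ where $B$ is the integral closure of $A$ in $S$ intersected appropriately — more simply, one writes it as a module-finite torsion-free inclusion composed with a localization-type map; the module-finite torsion-free inclusion of domains is exactly Proposition~\ref{pr:cocontract}, and the remaining piece (a transcendental or non-finite extension) needs the second bullet of Lemma~\ref{lem:copers1}, i.e.\ that $j(A^\circ)\subseteq S^\circ$, which holds automatically for an inclusion of domains since a nonzero element of $A$ is nonzero in $S$.

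Having all elementary pieces restrict to the $(-)_*$ level, and since a composite of maps each restricting to $(-)_*$ again restricts to $(-)_*$ (because the relevant evaluation maps compose compatibly, exactly as in the diagram chase at the end of the proof of Lemma~\ref{lem:copers2}), the general case follows. Concretely, given $g\in\Hom_R(S,M)_{*S}$, one writes the total evaluation map $\Hom_R(S,M)\to M$ as the composite of the evaluation maps along the factorization, notes that $g$ maps into $\Hom_{R'}(S',M')_{*S'}$ at each successive stage, and concludes $\epsilon(g)=g(1)\in M_{*R}$.

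\textbf{Main obstacle.} The delicate point is organizing the factorization so that each stage genuinely satisfies the hypotheses of one of the three prior results — in particular, making the reduction modulo minimal primes on \emph{both} sides of the map compatible (matching minimal primes of $S$ with those of $R$ they contract to, and keeping track of the $\Hom$-decompositions through Proposition~\ref{pr:minprimes}), and handling a general injection of domains that is neither module-finite nor satisfies an obvious test-element condition by further factoring it. I expect this bookkeeping — rather than any single hard estimate — to be where the real work lies; the individual steps are all either cited or immediate, but assembling them requires care that evaluation-at-$1$ maps compose correctly and that the co-test-element or $R^\circ$-into-$S^\circ$ hypothesis is available at each stage.
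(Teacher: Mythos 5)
Your proposal is correct and follows essentially the same route as the paper: reduce to $S$ a domain via Proposition~\ref{pr:minprimes} applied to the minimal primes of $S$, factor $j$ through $R/\ker j$, handle the resulting inclusion of domains by the second bullet of Lemma~\ref{lem:copers1} (a restriction argument which applies to the \emph{whole} inclusion at once, since a nonzero element of a domain stays nonzero under an injection), and handle the surjection onto a domain quotient by passing to a minimal prime of $R$ contained in the kernel and inducting along a saturated chain of primes using Lemma~\ref{lem:copers2}. The only detour in your write-up is the attempt to split the injection of domains into a module-finite torsion-free piece (Proposition~\ref{pr:cocontract}) plus a remainder --- this is unnecessary, as you yourself note at the end of that sentence, because $j(A^\circ)\subseteq S^\circ$ holds automatically and Lemma~\ref{lem:copers1} needs no finiteness hypothesis.
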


\begin{proof}
First note that we can replace $S$ by $S/P$ for some minimal prime $P$ of $S$.  Indeed, let $\phi \in \Hom_R(S,M)_{*S}$, let $P_1, \dotsc, P_n$ be the minimal primes of $S$, and let $\pi_i: S \twoheadrightarrow S/P_i$ be the natural surjections.  Note that
\[
\begin{array}{rccl}
& \Hom_R(S/P_i, M) & \cong & \Hom_R(S \otimes_S S/P_i, M)\\
 \cong & \Hom_S(S/P_i, \Hom_R(S,M)) & \cong & (0 :_{\Hom_R(S, M)} P_i)
\end{array}
\]
and then apply Proposition~\ref{pr:minprimes}, to obtain that $\phi = \sum_{i=1}^n \phi_i \circ \pi_i$ for some elements $\phi_i \in \Hom_R(S/P_i, M)_{*(S/P_i)}$,
so that if the statement holds for all the $S/P_i$, then $\epsilon(\phi) = \phi(1) = \sum_{i=1}^n \phi_i(\bar{1}) \in M_{*R}$.

So from now on we may assume $S$ is an integral domain.  Let $Q := \ker j$, which must then be prime.
\begin{claim*}  We may replace $S$ by $\bar{R} := R/Q$.   \end{claim*}
\begin{proof}[Proof of claim]
For let $h\in \Hom_R(S,M)_{*S}$.  Take an arbitrary $\bar{c} \in \bar{R}^\circ$; then $\bar{c} \in S^\circ$ as well.  Take an arbitrary positive integer $e_0$.  Then there is some $e_1\geq e_0$ such that there exist $S$-linear maps $g_e:S^{1/q} \ra \Hom_R(S,M)$ such that $h = \sum_{e=e_0}^{e_1} g_e(\bar{c}^{1/q})$.  For each such $e$, denote the following composition by $k_e$: \[
(\bar{R})^{1/q} \hookrightarrow S^{1/q} \arrow{g_e} \Hom_R(S,M) \ra \Hom_R(\bar{R}, M),
\]
where the rightmost map is restriction.  It is clear that each $k_e$ is $\bar{R}$-linear and that $\sum_{e=e_0}^{e_1} k_e(\bar{c}^{1/q}) = h |_{\bar{R}}$, where $h|_{\bar{R}}$ denotes the image of $h$ in $\Hom_R(\bar{R}, M)$.  Since $e_0$ and $\bar{c}$ were arbitrary, it follows that $h |_{\bar{R}} \in \Hom_R(\bar{R}, M)_{*\bar{R}}$.
\end{proof}

Now that we have proved the claim, we may assume that $S=\bar{R} = R/Q$, with $j$ the natural surjection.  Take a saturated chain \[
Q_0 \subset Q_1 \subset \cdots \subset Q_\ell = Q
\]
of prime ideals in $R$, where $Q_0$ is a minimal prime of $R$.  We may replace $R$ by $R/Q_0$ because of Proposition~\ref{pr:minprimes}.  Then the conclusion follows from Lemma~\ref{lem:copers2} and induction on $\ell$, with $\ell=1$ being the base case of the induction.
\end{proof}

We now state a duality theorem with tight closure, compare with \cite[Proposition 8.23]{HochsterHunekeTC1} and the better part of \cite{SmithTestIdeals}.

\begin{prop}[Duality with tight closure, \cf \cite{SmithTestIdeals}]\label{pr:duality}
Let $R$ be an $F$-finite reduced ring, let $E$ be an \emph{injective} $R$-module, and let $(-)'$ be the contravariant functor given by $(-)' := \Hom_R(-,E)$ on the category of $R$-modules.  Let $M$ be an \emph{arbitrary} $R$-module.  Then \[
(M_*)' \cong M' / 0^*_{M'}.
\]
\end{prop}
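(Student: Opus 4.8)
The plan is to unwind both sides and show that the natural map $M' \to (M_*)'$ (dual to the inclusion $M_* \hookrightarrow M$) is surjective with kernel exactly $0^*_{M'}$. Since $(-)' = \Hom_R(-,E)$ is exact on the short exact sequence $0 \to M_* \to M \to M/M_* \to 0$ only in its left-exact part, the map $M' \to (M_*)'$ is automatically surjective because $E$ is injective; so the entire content is the identification of the kernel, namely
\[
\ker\bigl(M' \twoheadrightarrow (M_*)'\bigr) = (M/M_*)' = \{\psi \in M' : \psi(M_*) = 0\},
\]
and we must show this coincides with $0^*_{M'}$.

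First I would fix an $F$-finite reduced ring $R$ together with a big test element $c \in R^\circ$, which by Theorem~\ref{thm:cotestint} is a co-test element, so that $M_* = M_*[c,1] = \sum_{e \geq 0} \im(\Hom_R({}^eR, M) \to M)$ where the map sends $g \mapsto g({}^ec)$. Dualizing, $\psi \in M'$ kills $M_*$ if and only if $\psi \circ g({}^ec) = 0$ for every $e$ and every $g \in \Hom_R({}^eR, M)$. The key adjunction to exploit is
\[
\Hom_R\bigl(\Hom_R({}^eR,M),\, E\bigr) \;\cong\; {}^eR \otimes_R M' \;\cong\; {}^e(M'),
\]
valid because ${}^eR$ is a finitely generated $R$-module (here $F$-finiteness is essential) and $E$ is injective, so that $\Hom_R(-,E)$ converts the covariant $\Hom_R({}^eR,-)$ into ${}^eR \otimes_R -$. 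Under this identification, "$\psi$ kills the image of all $g \mapsto g({}^ec)$" should translate precisely into the statement that $c^{1/q} \cdot \psi$ (i.e., the image of ${}^e c \otimes \psi$) is zero in an appropriate sense — which is exactly the condition, via the standard Matlis-style description of tight closure, that $\psi \in 0^*_{M'}$. The cleanest route is to recall the characterization $0^*_{M'} = \{\psi : c\,\psi^{q} = 0 \text{ in } {}^e M' \text{ for all } q \gg 0\}$ (the image of $\psi$ under Frobenius, scaled by $c$, vanishes), and check the two containments directly against the $M_*[c,1]$ description.

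For the containment $0^*_{M'} \subseteq \ker(M' \to (M_*)')$: given $\psi \in 0^*_{M'}$ and $z = g({}^ec) \in M_*$ with $g \in \Hom_R({}^eR,M)$, apply the adjunction to view $\psi \circ g \in \Hom_R({}^eR, E) \cong \Hom_R(R, {}^eE) = {}^eE$ paired against ${}^ec$; the tight-closure vanishing condition on $\psi$ forces $\psi(z) = 0$. Conversely, if $\psi$ kills every such $z$, then running the adjunction backwards produces, for each $e$, the vanishing of $c$ times the Frobenius image of $\psi$ in ${}^e M'$ against all of $M$, hence (since $E$ is injective, so pairings into $E$ detect zero) the actual vanishing of $c \cdot {}^e\psi$, which is the definition of $\psi \in 0^*_{M'}$. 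I expect the main obstacle to be bookkeeping the several canonical isomorphisms — especially keeping straight the ${}^e(-)$ twist on both the $\Hom$ and the $\otimes$ sides and verifying the pairing ${}^ec \otimes \psi \mapsto \psi(g({}^ec))$ is the right one — rather than any genuinely deep step; once the adjunction $\Hom_R(\Hom_R({}^eR,M),E) \cong {}^eR \otimes_R M'$ is set up correctly and compatibly with evaluation at ${}^ec$, the two containments are formal. A secondary subtlety is that $M$ is arbitrary (not finitely generated), so one must take the $0^*$ on $M'$ with its definition for arbitrary modules and be careful that no finiteness of $M$ is secretly used; the only finiteness invoked is that of ${}^eR$ over $R$, which holds by $F$-finiteness.
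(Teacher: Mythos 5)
Your proposal is correct and follows essentially the same route as the paper: dualize the inclusion $M_* \hookrightarrow M$ (surjectivity of $M' \to (M_*)'$ coming from injectivity of $E$), then identify the kernel with $0^*_{M'}$ via the canonical isomorphism $\Hom_R(\Hom_R({}^eR,M),E) \cong {}^eR \otimes_R M'$ for the finitely generated module ${}^eR$, using a big test element $c$ so that $M_* = M_*[c,1]$ and membership in $0^*_{M'}$ is tested by $c^{1/q}\otimes\psi = 0$ for all $q$. The bookkeeping you flag is exactly what the paper carries out with its map $\alpha_L(\ell\otimes f)(g) = f(g(\ell))$, and no further ideas are needed.
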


\begin{proof}
Let $c$ be a big test element for $R$.

Let $j: M_* \hookrightarrow M$ be the canonical injection, and $j': M' \onto (M_*)'$ the corresponding surjection.  In other words, for a map $f: M \ra E$, $j'(f) := f \circ j = f |_{M_*}$.  We will show that $\ker j' = 0^*_{M'}$.

Since $E$ is injective, for \emph{finitely generated} $R$-modules $L$, there is a canonical isomorphism (see for example \cite[Chapter 2, Exercise 14, p. 45]{Bourbaki1998} or \cite[Lemma 10.2.16]{BrodmannSharpLocalCohomology}).

 \[
\alpha_L: L \otimes_R \Hom_R(M,E) \ra \Hom_R(\Hom_R(L,M), E)
\]
given by $(\alpha_L(\ell \otimes f))(g) := f(g(\ell))$ for $g \in \Hom_R(L,M)$.  We apply this below to the finite $R$-modules $R^{1/q}$.

Let $f\in M'$.  Consider what it means to say that $f\in 0^*_{M'}$.  It is equivalent to say that $c^{1/q} \otimes f = 0$ in $R^{1/q} \otimes_R M'$ for all powers $q$ of $p$.  By the isomorphisms $\alpha_{R^{1/q}}$, it is equivalent to say that $f(g(c^{1/q}))=0$ for all $R$-linear maps $g: R^{1/q} \ra M$.  But since $M_*$ is the submodule of $M$ generated by such images $g(c^{1/q})$, another equivalent condition is to say that $0 = f(M_*) = \im (f\circ j) = \im j'(f)$.  In other words, $f\in 0^*_{M'}$ if and only if $f \in \ker (j')$.  That is, \[
M' / 0^*_{M'} = M' / \ker (j') \cong \im j' = (M_*)',
\]
as was to be shown.
\end{proof}

\begin{corollary} 
\label{cor.DualityWithTC}
Let $(R,\m)$ be a complete reduced Noetherian local ring of characteristic $p$, and let $(-)^\vee$ denote the Matlis duality functor.  Let $L$ be either an Artinian or a finitely generated $R$-module (or any other Matlis-dualizable module).  Then \[
(L_*)^\vee \cong L^\vee / 0^*_{L^\vee} \quad \text{ and } \quad (L^\vee)_* \cong (L / 0^*_L)^\vee.
\]
\end{corollary}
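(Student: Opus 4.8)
The plan is to deduce both isomorphisms from Proposition \ref{pr:duality} by taking $E$ to be the injective hull $E_R(R/\m)$, so that $(-)' = (-)^\vee$ is the Matlis duality functor. For the first isomorphism, I would simply apply Proposition \ref{pr:duality} directly with $M = L$: since $E$ is injective this gives $(L_*)^\vee \cong L^\vee/0^*_{L^\vee}$ with no further work, regardless of whether $L$ is Artinian or finitely generated (the proposition is stated for arbitrary $R$-modules $M$). So the content is really in the second isomorphism.

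For the second isomorphism, the idea is to apply the first one with $L$ replaced by $L^\vee$, obtaining $((L^\vee)_*)^\vee \cong (L^\vee)^\vee / 0^*_{(L^\vee)^\vee}$, and then use Matlis duality $(L^\vee)^\vee \cong L$ together with the fact that Matlis duality is exact and faithful on the category of Matlis-dualizable modules. Concretely: $L \mapsto L^\vee$ carries the class of Artinian modules to finitely generated modules and vice versa (over a complete local ring), and on this class $(-)^{\vee\vee} \cong \mathrm{id}$ naturally. Thus from the first isomorphism applied to $L^\vee$ we get $((L^\vee)_*)^\vee \cong L / 0^*_L$ after identifying $(L^\vee)^\vee$ with $L$ and checking that the submodule $0^*_{(L^\vee)^\vee}$ corresponds to $0^*_L$ under this identification. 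Dualizing once more, and using that a map of Matlis-dualizable modules is an isomorphism iff its dual is, yields $(L^\vee)_* \cong (L/0^*_L)^\vee$.

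The main obstacle I expect is a bookkeeping point rather than a conceptual one: one must make sure that tight closure commutes with the natural identification $(L^\vee)^\vee \cong L$, i.e. that $0^*_{(L^\vee)^\vee}$ is carried to $0^*_L$. This is a standard fact — tight closure of the zero submodule is functorial and Matlis duality over a complete local ring is a duality of categories on Matlis-dualizable modules — but it should be invoked explicitly, perhaps with a reference such as \cite{HochsterHunekeTC1} or by noting directly that an element $x \in L$ lies in $0^*_L$ iff $c^{1/q}\otimes x = 0$ in $R^{1/q}\otimes_R L$ for all $q$, and that this condition is preserved under the isomorphism $R^{1/q}\otimes_R L \cong R^{1/q}\otimes_R (L^\vee)^\vee$ induced by $(L^\vee)^\vee \cong L$. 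A minor additional point is to confirm that all modules appearing — $L$, $L^\vee$, $L_*$, $0^*_L$, etc. — remain Matlis-dualizable so that the second application of $(-)^\vee$ is legitimate; this follows because submodules and quotients of Artinian (resp. finitely generated) modules are again Artinian (resp. finitely generated) over a Noetherian local ring.
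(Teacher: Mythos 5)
Your proposal is correct and follows essentially the same route as the paper: the first isomorphism is Proposition~\ref{pr:duality} with $M=L$ and $E=E_R(R/\m)$, and the second is obtained by applying the proposition to $M=L^\vee$, identifying $(L^\vee)^\vee$ with $L$ (so that $0^*_{(L^\vee)^\vee}$ becomes $0^*_L$), and dualizing once more using that $(L^\vee)_*$, as a submodule of the Matlis-dualizable module $L^\vee$, is itself Matlis-dualizable. The bookkeeping points you flag are exactly the ones the paper handles implicitly.
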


\begin{proof}
The first statement follows from letting $M=L$ and $E = E_R(R/\m)$ in Proposition~\ref{pr:duality}.  As for the second, let $M = L^\vee$ and $E =E_R(R/\m)$ in Proposition~\ref{pr:duality}.  Then $L \cong M^\vee$ by Matlis duality, and \[
(L / 0^*_L)^\vee = (M^\vee / 0^*_{M^\vee})^\vee \cong ((M_*)^\vee)^\vee \cong M_* = (L^\vee)_*,
\]
where the last isomorphism follows from Matlis duality.
\end{proof}

Hence, we obtain immediately many statements about \tint s, at least in the complete case, by ``dualizing'' various theorems of tight closure theory.  Indeed, this is how we obtained the motivation for `co-persistence' and `co-contraction' statements.  For example, consider the following.

\begin{remark}
It would be natural to say that a $F$-finite ring $R$ is \emph{$F$-corational} if it is Cohen-Macaulay and $(\omega_R)_* = \omega_R$.  This concept already coincides with the definition of $F$-rationality in this context as can be readily verified, so we will say no more about it.
\end{remark}

Colon capturing is an extremely useful property of tight closure, and so we should expect that interesting dual statements can be made with respect to \tint\ and test ideals.  First we explain what a dual version of colon capturing would be.  In fact, the following is essentially dual to \cite[Theorem 3.1A]{HunekeTightClosureBook}, which says that if $R$, $A$ are as given below and $I$, $J$ are ideals of $A$, then $(IR :_R JR) \subseteq ((I :_A J)R)^*$.

\begin{thm}[Co-colon capturing]\label{thm.CoColonCapturing}
Let $A$ be a regular $F$-finite ring, and let $R$ be a module-finite and torsion-free ring extension of $A$.  Let $M$ be an $A$-module and $x\in A$.  Then \[
\Hom_A(R, xM)_{*R} = x \cdot \Hom_A(R,M)_{*R},
\]
where the module $\Hom_A(R,xM)$ is considered as a submodule of $\Hom_A(R,M)$ by the left-exactness of $\Hom$.
\end{thm}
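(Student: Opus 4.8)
The plan is to prove the two inclusions separately, but both will follow from a single structural observation: since $A$ is regular and $R$ is module-finite and torsion-free over $A$, the map $A \hookrightarrow R$ admits a nonzero $A$-linear splitting-like map $f : R \to A$ (any nonzero element of $\Hom_A(R,A)$, which is nonzero because $R$ is torsion-free of positive rank), and moreover $A$ being regular means $A^\circ$-elements are essentially automatically co-test elements on $A$-modules, so co-persistence (Theorem \ref{thm:copers}) and co-contraction (Proposition \ref{pr:cocontract}) both apply to $A \hookrightarrow R$. The first thing I would do is record that $\Hom_A(R, xM) \subseteq \Hom_A(R,M)$ is exactly the set of maps $\phi$ with $\phi(R) \subseteq xM$, and that multiplication by $x$ gives an isomorphism $M \xrightarrow{\sim} xM$ (we are free to assume $x$ is a nonzerodivisor on $M$; if $xM = 0$ both sides are zero, and the general case reduces to this after noting $x$ acts invertibly on the relevant submodule — actually it is cleanest to just treat $xM$ as the image and push maps through).

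For the inclusion $x \cdot \Hom_A(R,M)_{*R} \subseteq \Hom_A(R,xM)_{*R}$, I would argue as follows. Multiplication by $x$ is an $R$-linear endomorphism of $\Hom_A(R,M)$ whose image lands inside $\Hom_A(R,xM)$; by Lemma \ref{lem.ModuleMapsRespectInterior} it carries $\Hom_A(R,M)_{*R}$ into $(\Hom_A(R,xM))_{*R}$ — but one must be slightly careful, since Lemma \ref{lem.ModuleMapsRespectInterior} gives containment in $(x\Hom_A(R,M))_{*R}$ computed inside $\Hom_A(R,xM)$, and since the interior is computed intrinsically this is exactly $\Hom_A(R,xM)_{*R}$. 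So this direction is essentially immediate from functoriality of $(-)_*$.

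For the reverse inclusion $\Hom_A(R,xM)_{*R} \subseteq x \cdot \Hom_A(R,M)_{*R}$, which I expect to be the main obstacle, the idea is to use colon-capturing for the regular ring $A$ in its trivial form (in a regular ring, $x$ is a nonzerodivisor on $A$ and flatness of Frobenius gives exactly the needed control), transported up to $R$ via co-persistence and co-contraction. Concretely: take $g \in \Hom_A(R,xM)_{*R}$. Using a shared big test element $c$ for $A$ and $R$ (which exists since $A_c$ and $R_c$ are regular for suitable $c$), write $g = \sum_e \psi_e({}^e c)$ with $\psi_e : {}^e R \to \Hom_A(R, xM)$ being $R$-linear. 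Now I want to produce an $R$-linear map $R \to M$ whose image, times $x$, recovers each piece. The point is that for the regular ring $A$, the map ``multiply by $x$'' $\colon \Hom_A(R,M) \to \Hom_A(R,M)$ has the property that its interior-level behavior is governed by the Frobenius splitting of $A$: since $A$ is regular, $A^{1/q}$ is free over $A$, hence $\Hom_A({}^e R, \Hom_A(R,M)) \cong \Hom_A({}^eR \otimes_A R, M)$, and one can factor out $x$ before applying the $A$-linear Hom because $x$ is $A$-regular and ${}^e(-)$ is exact on the regular ring $A$ (this is the precise analog, under the dictionary of Remark \ref{rem.FormalDuality}, of the fact that colon capturing is trivial — an equality, not just a containment — over a regular ring). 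The key technical step is thus the isomorphism, for each $e$,
\[
\Hom_A\bigl({}^e R,\ \Hom_A(R, xM)\bigr) \;\cong\; x \cdot \Hom_A\bigl({}^e R,\ \Hom_A(R,M)\bigr),
\]
as submodules of $\Hom_A({}^eR, \Hom_A(R,M))$, valid because $A$ is regular so that ${}^eR$ and $R$ are built from $A$ in a way compatible with multiplication by the nonzerodivisor $x$; chasing $c$ through this gives $g \in x \cdot \Hom_A(R,M)_{*R}$. I would expect the bookkeeping — keeping track of which $\Hom$ is over $A$ and which module structure is the $R$-structure, and verifying the displayed isomorphism genuinely respects the $R$-action needed to conclude membership in the $R$-interior — to be where the real work lies; the regularity of $A$ is exactly what makes it go through, and without it one would only get a containment matching the tight-closure statement $(IR:JR) \subseteq ((I:J)R)^*$ quoted before the theorem.
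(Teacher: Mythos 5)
Your easy direction is fine and matches the paper's (it is one line via Lemma \ref{lem.ModuleMapsRespectInterior}). The hard direction, however, has a genuine gap: your ``key technical step,'' which after the adjunction $\Hom_R({}^eR, \Hom_A(R,N)) \cong \Hom_A({}^eR, N)$ amounts to the equality $\Hom_A({}^eR, xM) = x\cdot \Hom_A({}^eR, M)$, is false in the stated generality. That equality holds when ${}^eR$ is a \emph{projective} $A$-module (one lifts $\psi\colon {}^eR \to xM$ through the surjection $M \twoheadrightarrow xM$), but $R$ is only assumed module-finite and torsion-free over $A$, not projective; for instance, for $A = k[x^4,y^4] \subseteq R = k[x^4,x^3y,xy^3,y^4]$ the $A$-module ${}^eR$ is not projective (it is not even Cohen--Macaulay), and regularity of $A$ does not repair this. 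Torsion-free does not mean free here, and this is precisely the point at which the statement is \emph{not} the ``trivial'' colon capturing of a regular ring.

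The paper supplies the missing device: choose a finite free $A$-submodule $G \subseteq R$ with $R/G$ torsion, and $c \in A$ nonzero with $cR \subseteq G$. By Kunz, ${}^eG \cong ({}^eA)^{\oplus m}$ is projective over $A$, so $\Hom_A({}^eG, xM) = x\Hom_A({}^eG, M)$ does hold. One then writes $g = \sum_e \mu_e({}^e(cd))$ using the element $cd \in R^\circ$ (so that the auxiliary $c$ is absorbed into the test-element quantifier over $d$ and $q_0$), restricts the induced $A$-linear maps ${}^eR \to xM$ to ${}^eG$, factors out $x$ there, and extends back to ${}^eR$ by precomposing with multiplication by $c$. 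Without this free approximation and the extra factor $c$, your argument does not close. Two minor further points: the outer $\Hom$ in your displayed isomorphism should be over $R$, not $A$, before applying adjunction; and your proposed reduction to the case where $x$ is a nonzerodivisor on $M$ is neither justified nor needed.
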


\begin{proof}
It is easy to show that $x \cdot \Hom_A(R,M)_{*R} \subseteq \Hom_A(R, xM)_{*R} $.  Indeed, this direction is true for any ring homomorphism $A \ra R$, regardless of the properties of $A$, $R$, or the homomorphism.

Conversely, let $g\in \Hom_A(R, xM)_{*R}$.  There is a finite free $A$-submodule $G$ of $R$ such that $R/G$ is a torsion $A$-module.  Take a nonzero element $c$ of $A$ such that $cR \subseteq G$. Let $d\in R^\circ$ and let $q_0$ be a power of $p$.

Then $cd \in R^\circ$, so by the definition of \tint, there is some power $q_1\geq q_0$ of $p$ and $R$-linear maps $\mu_e: {}^eR \ra \Hom_A(R, xM)$ such that $\sum_{e=e_0}^{e_1} \mu_e({}^e(cd))= g$.  Each $\mu_e$ then induces an $A$-linear map $\alpha_e: {}^eR \ra xM$ (namely, $\alpha_e({}^er) = \mu_e({}^er)(1)$ for all ${}^er \in {}^eR$), so that
\[
\sum_{e=e_0}^{e_1} \alpha_e({}^e(cd)) = \sum_{e=e_0}^{e_1} \mu_e({}^e(cd))(1) = g(1),
\]
and more generally, for any $y \in R$, we have
\[
\sum_{e=e_0}^{e_1} \alpha_e(y \cdot {}^e(cd)) = \sum_{e=e_0}^{e_1} \mu_e(y \cdot {}^e(cd))(1) = \sum_{e=e_0}^{e_1} \mu_e({}^e(cd))(y) = g(y).
\]
Let $\beta_e$ be the restriction of  $\alpha_e$ to the $A$-submodule ${}^eG$ of ${}^eR$; note that ${}^e(cd) \subseteq {}^eG$.  Then $\beta_e \in \Hom_A({}^e G, xM) = x \Hom_A({}^e G, M)$, where equality holds because ${}^e G$ is a finite free $A$-module (since $A$ is regular and $F$-finite).  That is, $\beta_e = x \gamma_e$ for some $A$-linear $\gamma_e: {}^e G \ra M$. One then obtains $A$-linear maps $\delta_e: {}^eR \ra M$ by setting $\delta_e({}^er) = \gamma_e({}^e(cr))$ (which is well-defined since $cR \subseteq G$).  So $\delta_e \in \Hom_A({}^eR, M) \cong \Hom_R({}^eR, \Hom_A(R,M))$, and if we let the image of $\delta_e$ under this isomorphism be $\epsilon_e: {}^eR \ra \Hom_A(R,M)$, then we have for any $y \in R$,
\begin{align*}
\left(x \cdot \sum_{e=e_0}^{e_1} \epsilon_e({}^ed)\right)(y) &=  \sum_{e=e_0}^{e_1} (x \delta_e)(y \cdot {}^ed) =  \sum_{e=e_0}^{e_1} (x \gamma_e)(y \cdot {}^e(cd)) \\
&=  \sum_{e=e_0}^{e_1} \beta_e(y \cdot {}^e(cd)) =  \sum_{e=e_0}^{e_1} \alpha_e(y \cdot {}^e(cd)) = g(y).
\end{align*}
Because $y \in R$ was arbitrary, it follows that $g = x \cdot \sum_e \epsilon_e({}^ed) \in x (\Hom_A(R,M)_{*R})$.
\end{proof}

\section{Cophantom resolutions}
\label{sec.Cophantom}
Let $R$ be an $F$-finite ring of characteristic $p > 0$ and $N$ an $R$-module.  One way to turn $N$ into an ${}^eR$-module is by the tensor product (in other words, the Peskine-Szpiro Frobenius functor, \cite{PeskineSzpiroDimensionProjective}).  However, there is a dual approach.  We define
\[
\NPullback{e}{R}(N) := \Hom_R({}^eR, N).
\]
with the left-module structure coming from the right-module structure of ${}^eR$.  If $R$ is reduced, this is the same as first taking the natural $R^{1/q}$-module structure on $\Hom_R(R^{1/q}, N)$, then viewing it as an $R$-module via the isomorphism $R \cong R^{1/q}$ sending each $a \mapsto a^{1/q}$.
In the geometric setting $\NPullback{e}{R}(N)$ is simply $\myH^0((F^e)^! N)$, the zeroth cohomology of the $e$-iterated Frobenius upper-shriek, see \cite[Chapter III, Section 6]{HartshorneResidues}.

Several properties of the functor $\NPullback{e}{R}(-)$ are developed by J. Herzog in \cite{Her-Frob}, where it is denoted $\tilde{F}^e(-)$:

\begin{lemma}[{\cite[from 2.7]{Her-Frob}}]\label{lem:Herinj}
For any injective $R$-module $N$, $\NPullback{e}{R}(N)$ is also injective.
\end{lemma}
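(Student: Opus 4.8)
The plan is to identify the functor $\NPullback{e}{R}(-)=\Hom_R({}^eR,-)$ as the right adjoint of an exact functor; the conclusion then follows for purely formal reasons, since a right adjoint of an exact functor carries injective objects to injective objects. Concretely, let $F^{e}\colon R\to R$ be the $e$-th iterate of the Frobenius, $r\mapsto r^{q}$, and let $\rho={}^e(-)$ denote restriction of scalars along $F^{e}$: it sends an $R$-module $P$ to the module with the same underlying abelian group and twisted action $r\cdot p=r^{q}p$, which under the conventions of Section~\ref{sec.Definitions} is precisely the left $R$-module underlying ${}^eP$. The first step is the trivial observation that $\rho$ is exact, since it changes neither underlying groups nor underlying maps.

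The second step is to invoke the standard adjunction attached to a ring homomorphism $f\colon R\to S$: restriction of scalars $f^{\ast}$ admits the right adjoint $N\mapsto\Hom_R(S,N)$, where $S$ is given its $R$-module structure via $f$ for the purpose of forming the $\Hom$, and $\Hom_R(S,N)$ becomes an $S$-module through multiplication of $S$ on itself. Applying this with $f=F^{e}$ and $S=R$, the $R$-module structure that $F^{e}$ puts on the source copy of $R$ is exactly the left structure of ${}^eR$, while the residual $S$-action recording the module structure on the $\Hom$ is exactly the right structure ${}^ex\cdot s={}^e(sx)$ of ${}^eR$; hence $\Hom_R({}^eR,N)=\NPullback{e}{R}(N)$ is literally the right adjoint of $\rho$. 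The one point that needs care here is bookkeeping which of the two $R$-module structures on the bimodule ${}^eR$ is being contracted in the $\Hom$ and which one survives; once this is matched against the definitions in Section~\ref{sec.Definitions}, nothing further is needed.

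Putting these together, for every $R$-module $M$ there is a natural isomorphism $\Hom_R(M,\NPullback{e}{R}(N))\cong\Hom_R({}^eM,N)$. The right-hand side is the composite of $\rho$, which is exact, with $\Hom_R(-,N)$, which is exact because $N$ is injective; therefore $\Hom_R(-,\NPullback{e}{R}(N))$ is exact, i.e.\ $\NPullback{e}{R}(N)$ is injective. I expect no genuine obstacle: the entire argument is the adjunction plus exactness of restriction of scalars, and the only delicate point is the bimodule bookkeeping mentioned above. (Note that one cannot simply cite ``$\Hom_R(F,N)$ is injective for $F$ flat and $N$ injective'', since ${}^eR$ is flat only when $R$ is regular; the restriction-of-scalars formulation is what makes the argument go through in general. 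Also, this reasoning uses neither $F$-finiteness nor reducedness of $R$, although $F$-finiteness is what makes ${}^eR$ module-finite and hence $\NPullback{e}{R}(N)$ the well-behaved object studied by Herzog.)
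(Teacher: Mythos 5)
Your proof is correct. The paper gives no argument for this lemma---it is quoted directly from Herzog---and the argument you supply (identifying $\Hom_R({}^eR,-)$ as coinduction, i.e.\ the right adjoint of the exact restriction-of-scalars functor along $F^e$, with the bimodule bookkeeping for ${}^eR$ handled correctly) is the standard proof of exactly this statement.
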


\begin{lemma}[{\cite[Lemma 4.1]{Her-Frob}}] \cf \cite[Chapter 3]{BrunsHerzog}, \cite{HartshorneResidues}
If $(R,\m, k, E)$ is a local $F$-finite ring, then $\NPullback{e}{R}(E) \cong E$.
\end{lemma}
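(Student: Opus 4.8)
If $(R,\m,k,E)$ is a local $F$-finite ring (with $E = E_R(k)$ the injective hull of the residue field), then $\NPullback{e}{R}(E) = \Hom_R({}^eR, E) \cong E$.

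The plan is to exhibit $\NPullback{e}{R}(E)$ as an injective, $\m$-torsion module over the ring ${}^eR$ whose socle is one-dimensional over the residue field of ${}^eR$; any such module is the injective hull of its socle, hence equals $E_{{}^eR}({}^eR/{}^e\m)$, which, since ${}^eR\cong R$ as rings via ${}^ex\leftrightarrow x$, is identified with $E_R(k)=E$. Write $q=p^e$. The one structural feature to keep in mind is that ${}^eR$ is a module-finite local $R$-algebra via $r\mapsto {}^e(r^q)$, whose residue field is again $k$, but for which the induced residue-field extension $R/\m\into {}^eR/{}^e\m$ is the inclusion $k^q\subseteq k$; this is finite of degree $\delta:=[k:k^q]$ exactly because $R$, hence $k$, is $F$-finite. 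Equipped with its natural ${}^eR$-module structure, $\NPullback{e}{R}(E)=\Hom_R({}^eR,E)$ is precisely the object in the definition (the ``right'' module structure on the $\Hom$).

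Injectivity of $\NPullback{e}{R}(E)$ over ${}^eR$ (equivalently over $R$) is Lemma~\ref{lem:Herinj}. For the $\m$-torsion property, given $\phi\in\Hom_R({}^eR,E)$, the image $\phi({}^eR)$ is a finitely generated $R$-submodule of $E$ (as ${}^eR$ is module-finite over $R$), hence of finite length, hence killed by $\m^s$ for some $s$; the defining identity $\phi({}^e(r^qx))=r\,\phi({}^ex)$ then forces $\phi$ to vanish on ${}^e\big((\m^s)^{[q]}\big)$, and since $(\m^s)^{[q]}\supseteq\m^N$ for $N\gg 0$ this means $\phi$ is annihilated by a power of $\m$ acting through the right structure on ${}^eR$. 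An injective $\m$-torsion module over a Noetherian local ring is a direct sum of copies of the injective hull of the residue field, with one summand per dimension of the socle over the residue field; so everything reduces to computing $\operatorname{soc}\NPullback{e}{R}(E)$.

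By Hom-tensor adjunction, $\operatorname{soc}\NPullback{e}{R}(E)=\Hom_{{}^eR}({}^eR/{}^e\m,\Hom_R({}^eR,E))\cong\Hom_R(({}^eR/{}^e\m)|_R,E)$, where $({}^eR/{}^e\m)|_R$ denotes ${}^eR/{}^e\m$ regarded as an $R$-module through $R\to{}^eR\to{}^eR/{}^e\m$ --- i.e.\ the field $k$ viewed as a $k^q$-vector space of dimension $\delta$. Since $\m$ kills this module and $(0:_E\m)=\operatorname{soc}_RE\cong R/\m$, the socle becomes $\Hom_{R/\m}(({}^eR/{}^e\m)|_R,R/\m)$, a $\delta$-dimensional $R/\m$-vector space. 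The key point --- and the step I expect to be the main obstacle --- is that the module structure that actually governs this socle is not the $R/\m$-action but the action of ${}^eR/{}^e\m=k$ on the first slot, under which the socle is the $k^q$-linear dual of the field extension $k^q\subseteq k$; and that dual, $\Hom_{k^q}(k,k^q)$, is a \emph{free $k$-module of rank one}. This is the standard fact that any finite field extension is a Frobenius extension; concretely one may pick a $p$-basis $t_1,\dotsc,t_d$ of $k$, note that the monomials $t_1^{a_1}\cdots t_d^{a_d}$ with $0\le a_i<q$ form a $k^q$-basis of $k$, and check that the functional dual to $t_1^{q-1}\cdots t_d^{q-1}$ generates $\Hom_{k^q}(k,k^q)$ over $k$. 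Hence $\operatorname{soc}\NPullback{e}{R}(E)$ is one-dimensional over ${}^eR/{}^e\m$, and combining this with the previous paragraph we conclude $\NPullback{e}{R}(E)\cong E_{{}^eR}({}^eR/{}^e\m)\cong E$.

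A second, essentially equivalent route: complete (which changes neither $E$ nor $\NPullback{e}{R}(E)$, since ${}^eR$ is module-finite over $R$) and write $R=A/I$ with $A$ regular and $F$-finite; prove the statement for $A$ directly --- e.g.\ via local duality together with the fact that $\Hom_A({}^eA,A)$ is a free ${}^eA$-module of rank one --- and then descend along $A\onto R$ by identifying $E$ with $(0:_{E_A}I)$. Either way the computation hinges on the same rank-one statement above.
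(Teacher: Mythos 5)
The paper offers no proof of this lemma---it is quoted from Herzog---so there is no internal argument to compare against; judged on its own, your proof is correct and complete, and it is in fact the standard argument. Your three ingredients all check out: injectivity of $\NPullback{e}{R}(E)$ over ${}^eR$ is exactly Lemma~\ref{lem:Herinj} (coinduction along the module-finite map $R\to{}^eR$); the $\m$-power-torsion claim follows as you say, since $\m^s\phi({}^eR)=0$ forces $\phi$ to kill ${}^e\bigl((\m^s)^{[q]}\bigr)\supseteq{}^e(\m^N)$ for $N\gg0$, which is precisely annihilation of $\phi$ by $\m^N$ in the right-hand structure; and adjunction gives the socle as $\Hom_{{}^eR}({}^ek,\Hom_R({}^eR,E))\cong\Hom_R({}^ek,E)\cong\Hom_{R/\m}({}^ek,k)$. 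The only step deserving one more sentence is the identification of this last space with $\Hom_{k^q}(k,k^q)$: its elements are additive maps $\phi$ satisfying $\phi({}^e(\lambda^q\mu))=\lambda\,\phi({}^e\mu)$ rather than honest $k^q$-linear maps into $k^q$, but post-composition with $F^e\colon k\xrightarrow{\ \sim\ }k^q$ converts one into the other compatibly with the ${}^ek$-action on the source, so the rank-one statement transfers. For that rank-one statement you do not even need the explicit $p$-basis functional: $\Hom_{k^q}(k,k^q)$ is a vector space over the field $k$ whose $k^q$-dimension equals $[k:k^q]$ (finite precisely because $R$, hence $k$, is $F$-finite), so its $k$-dimension is $1$. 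With injectivity, $\m$-torsion, and one-dimensional socle in hand, Matlis' structure theory gives $\NPullback{e}{R}(E)\cong E_{{}^eR}({}^ek)$, and transporting along the ring isomorphism $R\cong{}^eR$ finishes the proof.
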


\begin{thm}[{\cite[from Satz 5.2]{Her-Frob}}]\label{thm:HerFrob}
Let $R$ be a local $F$-finite ring and $M$ a finitely generated $R$-module.  Then $M$ has finite injective dimension if and only if $\Ext^i_R({}^eR, M) =0$ for all $i, e>0$.
\end{thm}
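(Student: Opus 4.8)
The plan is to prove the two implications separately: the forward implication follows quickly from a classical Ext-vanishing theorem, while the converse is the substantive half, which I would attack by induction on $\dim R$, reducing modulo a regular element and exploiting that ${}^e(-)$ is an exact functor.

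For ``$M$ of finite injective dimension $\Rightarrow \Ext^i_R({}^eR,M)=0$ for all $i,e>0$'', I would invoke a theorem of Ischebeck: if $M$ is a finitely generated $R$-module of finite injective dimension and $N$ is any nonzero finitely generated $R$-module, then $\Ext^i_R(N,M)=0$ for all $i>\operatorname{depth}R-\operatorname{depth}_R N$. Applying this with $N={}^eR$ reduces the claim to the equality $\operatorname{depth}_R({}^eR)=\operatorname{depth}R$, and this holds because $F^e\colon R\to R$ is a module-finite local ring homomorphism (by $F$-finiteness), so the depth of the finite module ${}^eR$ is independent of whether it is computed over the source or the target copy of $R$, and over the target copy ${}^eR$ is simply $R$, of depth $\operatorname{depth}R$.

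For the converse the inductive step rests on two observations. First, for any nonzerodivisor $x\in\m$ on $R$, applying the exact functor ${}^e(-)$ to $0\to R\xrightarrow{x}R\to R/xR\to 0$ produces a short exact sequence of $R$-modules
\[
0\to {}^eR\xrightarrow{\theta_e} {}^eR\to {}^e(R/xR)\to 0 ,
\]
where $\theta_e$ is the $R$-linear map ${}^er\mapsto{}^e(xr)$: it is injective, its $q$-th iterate is multiplication by the element $x$ on ${}^eR$ (since $x$ acts on ${}^eR$ by ${}^er\mapsto{}^e(x^qr)$), and ${}^e(R/xR)$ is killed by $x$ and is canonically the $e$-th Frobenius pushforward of the ring $R/xR$. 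Second, if $x$ is moreover a nonzerodivisor on $M$, the change-of-rings spectral sequence for $R\to R/xR$ collapses to give $\Hom_R({}^e(R/xR),M)=0$ and $\Ext^{i+1}_R({}^e(R/xR),M)\cong\Ext^i_{R/xR}({}^e(R/xR),M/xM)$ for all $i\ge 0$. Now assume $\Ext^i_R({}^eR,M)=0$ for all $i,e>0$. If $\dim R=0$ then $\m$ is nilpotent, so for $e\gg 0$ one has $\m^{[q]}=0$, and then $(r+m)^q=r^q+m^q=r^q$ for $m\in\m$ shows that the $R$-action on ${}^eR$ factors through $R/\m=k$; hence for such $e$, ${}^eR$ is a nonzero finite-dimensional $k$-vector space and $0=\Ext^i_R({}^eR,M)$ is a nonzero finite direct sum of copies of $\Ext^i_R(k,M)$, forcing $\Ext^i_R(k,M)=0$ for all $i>0$, i.e.\ $M$ injective. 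If $\dim R\ge 1$ and some $x\in\m$ is a nonzerodivisor on both $R$ and $M$, then the long exact $\Ext$-sequence of the displayed sequence together with $\Ext^{\ge 1}_R({}^eR,M)=0$ gives $\Ext^k_R({}^e(R/xR),M)=0$ for all $k\ge 2$; by the collapse, $\Ext^i_{R/xR}({}^e(R/xR),M/xM)=0$ for all $i,e>0$, so the inductive hypothesis yields $\operatorname{id}_{R/xR}(M/xM)<\infty$, whence $\operatorname{id}_R M=\operatorname{id}_{R/xR}(M/xM)+1<\infty$.

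The main obstacle is exactly the case $\dim R\ge 1$ in which $\m$ contains no element that is simultaneously a nonzerodivisor on $R$ and on $M$ --- for instance when $\operatorname{depth}_R M=0$, which is perfectly compatible with finite injective dimension (take $M=\omega_R/(\underline{x})\omega_R$ over a non-regular Cohen--Macaulay ring). There the na\"ive reduction stalls, and the cleaner fix is to dualize the forward argument rather than repair the induction: reduce first to $R$ complete (harmless, since completion is faithfully flat, preserves $F$-finiteness, and commutes both with ${}^e(-)$ and with $\Ext^i_R({}^eR,-)$ on finitely generated modules), fix a dualizing complex $D$, and use biduality to rewrite $\RHom{R}({}^eR,M)\simeq\RHom{R}\big({}^eR\LTensor{R}\RHom{R}(M,D),\,D\big)$. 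The hypothesis then says precisely that the derived Frobenius pushforward of the complex $C:=\RHom{R}(M,D)$ is acyclic outside a single degree for every $e$, and a Peskine--Szpiro-style acyclicity argument --- the Tor-analogue of the theorem, essentially due to Peskine--Szpiro and Herzog --- forces $\operatorname{pd}_R C<\infty$, which is equivalent to $\operatorname{id}_R M<\infty$. Establishing that acyclicity criterion is where the real work is concentrated.
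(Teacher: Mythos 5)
First, a point of reference: the paper does not prove this statement at all --- it is quoted from Herzog's Satz 5.2 --- so there is no internal argument to measure yours against. Your forward implication is correct and complete: Ischebeck's theorem (for $M\neq 0$ finitely generated with $\operatorname{id}_RM<\infty$ and $N\neq 0$ finitely generated, $\sup\{i:\Ext^i_R(N,M)\neq 0\}=\operatorname{depth}R-\operatorname{depth}_RN$) applies to $N={}^eR$, which is finitely generated by $F$-finiteness, and $\operatorname{depth}_R({}^eR)=\operatorname{depth}R$ because $F^e$ is a module-finite local homomorphism. The dimension-zero case and the inductive step via a common nonzerodivisor are also correct (the Rees change-of-rings isomorphism and $\operatorname{id}_RM=\operatorname{id}_{R/xR}(M/xM)+1$ are used properly, and ${}^e(R/xR)$ is indeed the Frobenius pushforward of $R/xR$).

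The converse, however, contains a genuine gap, which you have located but not closed. The induction only runs when a nonzerodivisor on both $R$ and $M$ exists at every stage, which essentially restricts you to $R$ Cohen--Macaulay and $M$ maximal Cohen--Macaulay; the theorem is needed without those hypotheses. The proposed repair is not a proof: the ``Peskine--Szpiro-style acyclicity criterion'' --- that the behaviour of ${}^eR\LTensor{R}C$ for all $e$ forces $\operatorname{pd}_RC<\infty$ --- is precisely the hard converse (the complex analogue of Herzog's Satz 3.1), and you explicitly defer it. Moreover the translation through the dualizing complex is misstated: the hypothesis $\Ext^i_R({}^eR,M)=0$ for $i>0$ says that $\RHom{R}({}^eR\LTensor{R}C,\,D)$ is concentrated in degree $0$, equivalently that ${}^eR\LTensor{R}C\simeq\RHom{R}(\Hom_R({}^eR,M),D)$, \emph{not} that ${}^eR\LTensor{R}C$ is acyclic outside a single degree --- the $D$-dual of a module is spread over an interval of length equal to its Cohen--Macaulay defect, so the concentration you assert does not follow. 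A workable route is instead to complete, set $(-)^\vee=\Hom_R(-,E)$, and use the isomorphism $\Ext^i_R({}^eR,M)^\vee\cong\Tor_i^R({}^eR,M^\vee)$ (valid because ${}^eR$ is finitely presented and $(-)^\vee$ is exact) together with $\operatorname{id}_RM=\operatorname{fd}_RM^\vee$; this converts the problem into a Tor-vanishing criterion for the Artinian module $M^\vee$, or alternatively one can track Bass numbers through the functor $\Hom_R({}^eR,-)$ applied to a minimal injective resolution. Either way, that remaining step is the real content of the theorem and is absent from your write-up.
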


We will also find the following property very useful:
\begin{prop}[Co-base change of $\text{\Fontskrivan\slshape{F}}_e$]\label{pr:cobase}
Let $R \ra S$ be a map of $F$-finite rings.  Then $\NPullback{e}{S}(\Hom_R(S,-)) \cong \Hom_R(S, \NPullback{e}{R}(-))$.
\end{prop}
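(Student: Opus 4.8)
The plan is to unwind both sides using only the tensor-hom adjunction and the defining isomorphism ${}^eS \cong S \otimes_R {}^eR$ of bimodules. Recall that for a ring map $R \to S$ and any $S$-module $N$ (here $N = \Hom_R(S,-)$ evaluated at some $R$-module), we have $\NPullback{e}{S}(N) = \Hom_S({}^eS, N)$ with left $S$-module structure induced from the right $S$-action on ${}^eS$; and $\NPullback{e}{R}(-) = \Hom_R({}^eR,-)$. So the claim amounts, for an arbitrary $R$-module $M$, to a natural isomorphism
\[
\Hom_S\bigl({}^eS,\ \Hom_R(S,M)\bigr) \ \cong\ \Hom_R\bigl(S,\ \Hom_R({}^eR, M)\bigr)
\]
of $S$-modules.

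First I would rewrite the left side. The key observation is that as an $S$-$R$ bimodule (left $S$ via the displayed left structure, right $R$ via $r$ acting as $r^q$ on ${}^eR$, i.e. via $R \to {}^eR \to {}^eS$), there is a canonical isomorphism ${}^eS \cong S \otimes_R {}^eR$: both have underlying set formally that of $S$, and the map $s \otimes {}^er \mapsto {}^e(s'\! r)$ where $s'$ is any... more carefully, the bimodule ${}^eS$ receives a map from $S \otimes_R {}^eR$ sending $s \otimes {}^e r \mapsto {}^e(s)\cdot {}^e(r)$ using the right $R$-module (in fact ${}^eS$ is an ${}^eS$-$S$ bimodule), and since $S$ is $F$-finite this is an isomorphism of left-$S$, right-$R$ bimodules. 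Granting this, the tensor-hom adjunction gives
\[
\Hom_S\bigl(S \otimes_R {}^eR,\ \Hom_R(S,M)\bigr)\ \cong\ \Hom_R\bigl({}^eR,\ \Hom_S(S, \Hom_R(S,M))\bigr)\ \cong\ \Hom_R\bigl({}^eR,\ \Hom_R(S,M)\bigr).
\]
Now apply tensor-hom adjunction once more to the inner composite, or rather swap the order: $\Hom_R({}^eR, \Hom_R(S,M)) \cong \Hom_R(S \otimes_R {}^eR, M) \cong \Hom_R({}^eR \otimes_R S, M) \cong \Hom_R(S, \Hom_R({}^eR, M))$, using commutativity of $\otimes_R$ up to the obvious swap and adjunction again. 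Tracking the $S$-module structures through these adjunctions (the outer $S$-action on the left-hand side corresponds throughout to the $S$-action on the $S$-slot on the right-hand side) yields the desired $S$-linear isomorphism.

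The main obstacle, and the step requiring genuine care rather than formal nonsense, is the identification ${}^eS \cong S \otimes_R {}^eR$ as bimodules, and then bookkeeping the left-$S$-module structures through the string of adjunction isomorphisms so that the final isomorphism is $S$-linear and not merely $R$-linear. The subtlety is that ${}^eS$ carries two commuting actions that must not be confused: the ``outer'' left $S$-action (coming from $r \cdot {}^e x = {}^e(r^q x)$, which is what makes $\NPullback{e}{S}$ a functor to $S$-modules) and the ``Frobenius-twisted'' action used to form the Frobenius pushforward; in the co-base-change formula it is the outer action on both sides that must be matched. Once one fixes the convention (as in the remark after the definition of $\NPullback{e}{R}$, identifying $\NPullback{e}{R}(N)$ with the $R^{1/q}$-module $\Hom_R(R^{1/q},N)$ transported along $R \cong R^{1/q}$), everything goes through: the isomorphism ${}^eS \cong S \otimes_R {}^eR$ is then literally the statement $S^{1/q} \cong S \otimes_R R^{1/q}$, valid because $R \to S$ is a ring map and hence commutes with taking $q$-th roots in the free way, and the rest is the associativity and commutativity of $\otimes$ together with Hom-tensor adjunction. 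I would present this as a short chain of canonical isomorphisms with a sentence each justifying the bimodule identity and the tracking of the $S$-structure.
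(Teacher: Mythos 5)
Your overall strategy (a bimodule identification for ${}^eS$ plus two applications of Hom--tensor adjunction) has the right shape, but the specific identity you rely on is false, and the error is not cosmetic: your adjunction step depends on exactly the module structure for which the identity fails. You assert ${}^eS \cong S \otimes_R {}^eR$ and gloss it as ``literally the statement $S^{1/q} \cong S \otimes_R R^{1/q}$, valid because $R \to S$ is a ring map.'' The map $S \otimes_R R^{1/q} \to S^{1/q}$, $s \otimes r^{1/q} \mapsto s\,r^{1/q}$, is the relative Frobenius of $S$ over $R$, and it is not an isomorphism in general: for $R = \mathbb{F}_p \subseteq S = \mathbb{F}_p[x]$ the source is $S$ while $S^{1/p}$ is free of rank $p$ over $S$. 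Indeed the paper treats this isomorphism as a genuine extra hypothesis (Proposition \ref{pr:flatbc}(ii)), satisfied e.g.\ for \'etale maps; $F$-finiteness only gives that ${}^eS$ is a finite $S$-module, nothing more. Moreover, your first adjunction $\Hom_S(S\otimes_R {}^eR, -) \cong \Hom_R({}^eR, \Hom_S(S,-))$ is the extension-of-scalars adjunction, so it forces $S \otimes_R {}^eR$ to mean base change along the Frobenius-twisted \emph{left} $R$-structure of ${}^eR$ --- which is precisely the relative Frobenius source. So the gap cannot be repaired by quietly reinterpreting which tensor product is meant.

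The identification that does hold, and that the paper uses, is ${}^eS \cong {}^eR \otimes_R S$ with the tensor taken over the \emph{untwisted} right $R$-structure of ${}^eR$ (in root notation $x^{1/q}\cdot r = (rx)^{1/q}$); this is essentially the tautology $R^{1/q}\otimes_{R^{1/q}} S^{1/q} \cong S^{1/q}$ and carries no base-change content. With that in hand the paper's proof is a three-step chain: first collapse the outer Hom by the restriction--coinduction adjunction $\Hom_S({}^eS, \Hom_R(S,M)) \cong \Hom_R({}^eS, M)$ (this step needs no description of ${}^eS$ at all, only that it is an $S$-module); then substitute ${}^eS \cong {}^eR \otimes_R S$; then apply Hom--tensor adjunction once more to arrive at $\Hom_R(S, \Hom_R({}^eR, M))$. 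If you restructure your argument this way --- keeping the inner $\Hom_R(S,M)$ intact in the first step rather than trying to peel $S$ off the first argument of the outer Hom --- it goes through, and the bookkeeping of $S$-structures that you rightly flag is then the only remaining point to verify.
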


\begin{proof}
This can be thought of as a special case of the fact that the formation of upper-shriek respects compositions (and the fact that $R \to S \to {}^e S$ is the same as $R \to {}^e R \to {}^e S$) \cf \cite[Chapter III, Proposition 6.6]{HartshorneResidues}.

To see it algebraically, we have \begin{align*}
\Hom_S({}^eS, \Hom_R(S, -)) &\cong \Hom_R({}^eS, -) \cong \Hom_R({}^eR \otimes_R S, -) \\
&\cong \Hom_R(S, \Hom_R({}^eR, -)).
\end{align*}
\end{proof}

Let $C^{\mydot}$ be a complex of $R$-modules indexed cohomologically.  We say that $C^{\mydot}$ has \emph{cophantom cohomology at $i$} if $Z^i(C^{\mydt})_* \subseteq B^i(C^{\mydt})$.  We say it has \emph{stably} cophantom cohomology if this is also true for the induced complex $\Hom_R({}^eR, C^{\mydt})$ for all $e$ (the $R$-module action on this complex is on the left, and so it is equivalent to taking the $R^{1/q}$-module interior of $\Hom_R(R^{1/q}, C^{\mydt})$).

  For an $R$-module $M$, a complex $E^{\mydot}$ of injective modules is a \emph{cophantom resolution} of $M$ if $E^i = 0$ for $i<0$, $H^0(E^{\mydt}) \cong M$, and $E^{\mydot}$ has stably cophantom cohomology at every $i > 0$.  The length of the shortest possible cophantom resolution of $M$ is called the \emph{cophantom injective dimension} (cid) of $M$.  If there is no such resolution, we say that cid$(M) = \infty$.

\begin{remark}
\label{rem.FiniteIDGivesFiniteCID}
For instance, if $M$ is finitely generated and has finite injective dimension, any injective resolution is a cophantom resolution as well, which means that cid$(M) \leq $id$(M)$. 
To see this, let $E^\mydot$ be an injective resolution of $M$.  By left-exactness of $\Hom$, it is clear that $\NPullback{e}{R}(M) = H^0(\NPullback{e}{R}(E^{\mydt}))$, and we know that each $\NPullback{e}{R}(E^i)$ is injective, so it suffices to show that $\NPullback{e}{R}(E^\mydt)$ is acyclic.  But for any $i>0$ and $e>0$, \[
H^i(\NPullback{e}{R}(E^\mydt)) = H^i(\Hom_R({}^eR, E^\mydt)) = \Ext^i_R({}^eR, M) = 0,
\]
by Theorem~\ref{thm:HerFrob}.
\end{remark}

\begin{lemma}\label{lem:phantom}
Let $R \hookrightarrow S$ be a module-finite torsion-free extension of $F$-finite rings.   Assume either: \begin{itemize}
\item that $R$ has a co-test element $c$ that is not in any minimal prime of $S$, or
\item that $R^\circ \subseteq S^\circ$.
\end{itemize}
Let \[
\theta: (U \arrow{\alpha} V \arrow{\beta} W)
\]
be a sequence of $R$-modules and homomorphisms such that \begin{enumerate}
\item $\alpha$ is injective,
\item $\beta \circ \alpha = 0$, and
\item $(\ker \beta)_* \subseteq \im \alpha$.
\end{enumerate}
Then the sequence $\Hom_R(S, \theta)$ of $S$-modules has the same three properties.
\end{lemma}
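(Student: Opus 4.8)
The plan is to apply the covariant, left-exact functor $\Hom_R(S,-)$ to $\theta$ and then read off properties (1)--(3) for the resulting sequence of $S$-modules, with the single nontrivial point handled by co-persistence. Write $\alpha' := \Hom_R(S,\alpha)$ and $\beta' := \Hom_R(S,\beta)$, so $\Hom_R(S,\theta)$ is the sequence $\Hom_R(S,U)\arrow{\alpha'}\Hom_R(S,V)\arrow{\beta'}\Hom_R(S,W)$. Property (1) will be immediate from left-exactness of $\Hom$ (injectivity of $\alpha$ gives injectivity of $\alpha'$), and property (2) from functoriality ($\beta'\circ\alpha' = \Hom_R(S,\beta\circ\alpha) = \Hom_R(S,0) = 0$). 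Before attacking (3) I would record two identifications, both consequences of left-exactness: applying $\Hom_R(S,-)$ to $0\to\ker\beta\to V\arrow{\beta}W$ gives $\ker\beta' = \Hom_R(S,\ker\beta)$, and since $\alpha$ restricts to an isomorphism onto $\im\alpha$ one gets $\im\alpha' = \Hom_R(S,\im\alpha) = \{\,g\in\Hom_R(S,V):g(S)\subseteq\im\alpha\,\}$.

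For property (3) --- which now reads $(\Hom_R(S,\ker\beta))_{*S}\subseteq\Hom_R(S,\im\alpha)$ --- the key trick is to reduce a submodule-level containment to a pointwise one. Given $g\in(\Hom_R(S,\ker\beta))_{*S}$ and an arbitrary $s\in S$, I would use that $M_{*S}$ is an $S$-submodule of $M$ for any $S$-module $M$ (which follows from Lemma~\ref{lem.ModuleMapsRespectInterior} applied to multiplication by $s$), so that $s\cdot g$ again lies in $(\Hom_R(S,\ker\beta))_{*S}$. Co-persistence (Lemma~\ref{lem:copers1}, with $M=\ker\beta$) --- whose hypotheses are exactly the dichotomy assumed here --- then says that the evaluation map $\epsilon\colon\Hom_R(S,\ker\beta)\to\ker\beta$, $\epsilon(h)=h(1)$, carries $(\Hom_R(S,\ker\beta))_{*S}$ into $(\ker\beta)_{*R}$. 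Hence
\[
g(s) = (s\cdot g)(1) = \epsilon(s\cdot g) \in (\ker\beta)_{*R} \subseteq \im\alpha,
\]
the last step being hypothesis (3) on $\theta$. As $s$ was arbitrary, $g(S)\subseteq\im\alpha$, that is, $g\in\Hom_R(S,\im\alpha)=\im\alpha'$, which is property (3).

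I do not expect a serious obstacle: the argument is formal and short. The only points requiring care are bookkeeping ones --- checking that $(-)_{*S}$ really produces an $S$-submodule so that $s\cdot g$ stays inside it, and observing that $\epsilon(s\cdot g)=g(s)$, which is precisely the device converting the submodule-level containment we want into the pointwise statement co-persistence provides. The hypotheses on $R\hookrightarrow S$ (a co-test element of $R$ outside every minimal prime of $S$, or $R^\circ\subseteq S^\circ$) are present for exactly one reason: they are what makes Lemma~\ref{lem:copers1} applicable.
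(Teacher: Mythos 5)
Your proof is correct, and it takes a genuinely different route from the paper's. The paper handles property (3) by choosing a finite free presentation $R^n \to R^m \to S \to 0$ of $S$ (this is where module-finiteness enters), dualizing to embed $\Hom_R(S,V)$ into $V^m$, unwinding the definition of $(\ker \beta')_{*S}$ by hand so as to push an element $\phi$ of it into $(\ker \beta^m)_{*R} = ((\ker \beta)_{*R})^{\oplus m} \subseteq (\im \alpha)^{\oplus m}$, and then recovering $\phi \in \im \alpha'$ by a diagram chase. You instead identify $\ker \beta' = \Hom_R(S,\ker \beta)$ and $\im \alpha' = \Hom_R(S,\im \alpha)$ via left-exactness and injectivity of $\alpha$, and reduce the containment to the pointwise statement $g(s) \in \im \alpha$ through the observation $g(s) = \epsilon(s\cdot g)$, together with the fact (via Lemma~\ref{lem.ModuleMapsRespectInterior}) that $(-)_{*S}$ is an $S$-submodule; co-persistence (Lemma~\ref{lem:copers1}) then does all the remaining work, and its hypotheses are exactly the bullet-point dichotomy assumed here. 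Your argument is shorter, avoids the free presentation and the implicit use of the fact that tight interior commutes with finite direct sums, and never actually uses that $S$ is module-finite or torsion-free over $R$ --- so it proves a slightly more general statement (those hypotheses are needed elsewhere in the section, e.g.\ for Proposition~\ref{pr:coph} as stated and for the applications). In effect the paper re-proves a version of co-persistence inline (the maps $s\delta\gamma_e j_e$ landing in $V^m$) rather than quoting Lemma~\ref{lem:copers1}; your version makes that dependence explicit and is the cleaner of the two.
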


\begin{proof}
Parts 1 and 2 are obvious, since $\Hom_R(S,-)$ is a left-exact  additive functor.

As for 3, let $\alpha': \Hom_R(S, U) \ra \Hom_R(S,V)$ and $\beta': \Hom_R(S,V) \ra \Hom_R(S,W)$ be the $S$-linear maps induced from $\alpha$, $\beta$ respectively.  Since $S$ is finitely presented as an $R$-module, we have a finite $R$-free presentation $\omega: (R^n \ra R^m \ra S \ra 0)$.  Then we get a double complex $\Hom_R(\omega, \theta)$ which is represented as the following commutative diagram with exact rows:
\[ \xymatrix{
0 \ar[r]& \Hom_R(S,U) \ar[r]^-{d} \ar@{^{(}->}[d]^{\alpha'} & U^m \ar@{^{(}->}[d]^{\alpha^m} \ar[r]^{g} & U^n \ar@{^{(}->}[d]^{\alpha^n} \\
0 \ar[r]& \Hom_R(S,V) \ar[r]^-{s} \ar[d]^{\beta'} & V^m \ar[r]^h \ar[d]^{\beta^m} &V^n \ar[d]^{\beta^n}\\
0 \ar[r]& \Hom_R(S,W) \ar[r]^-{f} & W^m \ar[r]^i & W^n
} \]
We want to show that $\im \alpha' \supseteq (\ker \beta')_{*S}$.  So let $\phi \in (\ker \beta')_{*S}$.
We have $0=f(0) = f(\beta'(\phi)) = \beta^m(s(\phi))$, so that $s(\phi) \in \ker \beta^m$.  Moreover, let $\delta: \ker \beta' \hookrightarrow \Hom_R(S,V)$ be the natural injection.  Let $c$ be a co-test element of $R$ that is not in any minimal prime of $S$, or if $R^\circ \subseteq S^\circ$, let $c \in R^\circ$ be arbitrary.  Take any power $q_0$ of $p$.  Since $c \in S^\circ$, there is some $e_1 \geq e_0$ and $S$-linear maps $\gamma_e: S^{1/q} \ra \ker \beta'$ such that $\sum_{e=e_0}^{e_1} \gamma_e(c^{1/q}) = \phi$.  Let $j_e: R^{1/q} \hookrightarrow S^{1/q}$ be the induced structure map for each $e$.  Then each $s\delta \gamma_e j_e: R^{1/q} \ra V^m$ is $R$-linear, and $\beta^m s \delta \gamma_e j_e = f \beta' \delta \gamma_e j_e = 0$, so that $\im (s \delta \gamma_e j_e) \subseteq \ker \beta^m$.  So we obtain $R$-linear maps $\epsilon_e: R^{1/q} \ra \ker \beta^m$ such that $s\delta \gamma_e j_e = (\ker \beta^m \hookrightarrow V^m) \circ \epsilon_e$, and so $\sum_{e=e_0}^{e_1} \epsilon_e(c^{1/q}) = s(\phi)$.  Thus, $s(\phi) \in (\ker \beta^m)_{*R} = ((\ker \beta)_{*R})^{\oplus m} \subseteq (\im \alpha)^{\oplus m} = \im \alpha^m$.  The following claim finishes the proof
\begin{claim}
$\phi \in \im \alpha'$
\end{claim}
\begin{proof}[Proof of claim]
There is some $x\in U^m$ such that $s(\phi) = \alpha^m(x)$.  Then \[
0 = h(s(\phi)) = h(\alpha^m(x)) = \alpha^n(g(x)),
\]
and since $\alpha^n$ is injective, $x \in \ker g = \im d$.  Thus, $x=d(\psi)$ for some $R$-linear $\psi: S \ra U$.  So we have $s(\phi) = \alpha^m(x) = \alpha^m(d(\psi)) = s(\alpha'(\psi))$.  Since $s$ is injective, $\phi = \alpha'(\psi) \in \im \alpha'$.
\end{proof}\end{proof}

As a result, we get the following:
\begin{prop}\label{pr:coph}
Let $R \ra S$ be as in Lemma~\ref{lem:phantom}.  Let $M$ be an $R$-module that has a cophantom injective resolution $E^{\mydot}$ over $R$.  Then $\Hom_R(S, E^{\mydt})$ is a cophantom injective resolution of $\Hom_R(S,M)$ over $S$.
\end{prop}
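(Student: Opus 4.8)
The plan is to assemble the result from the pieces already in hand: Lemma \ref{lem:phantom} (which transports the three defining properties of a cophantom sequence across $\Hom_R(S,-)$), Lemma \ref{lem:Herinj} (which says $\NPullback{e}{R}$ preserves injectivity), and Proposition \ref{pr:cobase} (which identifies $\NPullback{e}{S}(\Hom_R(S,-))$ with $\Hom_R(S,\NPullback{e}{S_{\textnormal{err}}}\!-)$ — more precisely with $\Hom_R(S,\NPullback{e}{R}(-))$). First I would record that $\Hom_R(S,E^i)$ is injective over $S$ for each $i$, and that $\Hom_R(S,E^i)=0$ for $i<0$; the former is the standard fact that $\Hom_R(S,-)$ sends injective $R$-modules to injective $S$-modules (since $-\otimes_S S = -$ viewed over $R$ is exact), and the latter is immediate. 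Next, since $\Hom_R(S,-)$ is left exact, $H^0(\Hom_R(S,E^{\mydt})) \cong \Hom_R(S, H^0(E^{\mydt})) \cong \Hom_R(S,M)$, which supplies the $H^0$ requirement.

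The substantive point is stable cophantomness at every $i>0$. Fix $i>0$; I want to show $Z^i(\Hom_R(S,E^{\mydt}))_{*S} \subseteq B^i(\Hom_R(S,E^{\mydt}))$, and the same after applying $\NPullback{e'}{S}$ for every $e'$. For the bare ($e'=0$) statement, apply Lemma \ref{lem:phantom} to the three-term sequence $\theta\colon (E^{i-1}\to E^i\to E^{i+1})$ — wait, one must be slightly careful: Lemma \ref{lem:phantom} as stated needs $\alpha$ injective, which fails for $E^{i-1}\to E^i$ in general. The clean fix is to apply Lemma \ref{lem:phantom} to the sequence $\big(\,\im(E^{i-1}\to E^i) \hookrightarrow E^i \to E^{i+1}\,\big)$, i.e.\ replace $U$ by $B^i(E^{\mydt})$; then (1) holds by construction, (2) holds since $E^{\mydt}$ is a complex, and (3) is exactly the cophantom-cohomology condition $Z^i(E^{\mydt})_* \subseteq B^i(E^{\mydt})$ at $i$. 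The hypotheses of Lemma \ref{lem:phantom} on the map $R\hookrightarrow S$ are exactly those imported from Lemma \ref{lem:phantom}'s statement, which Proposition \ref{pr:coph} assumes verbatim. Lemma \ref{lem:phantom} then yields that $\Hom_R(S,\theta)$ has properties (1)–(3); since $\Hom_R(S,-)$ is left exact it commutes with the formation of kernels and images, so property (3) for $\Hom_R(S,\theta)$ reads precisely $Z^i(\Hom_R(S,E^{\mydt}))_{*S}\subseteq B^i(\Hom_R(S,E^{\mydt}))$, i.e.\ cophantom cohomology of $\Hom_R(S,E^{\mydt})$ at $i$.

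For the \emph{stable} version I would iterate one more layer. By Proposition \ref{pr:cobase}, $\NPullback{e'}{S}(\Hom_R(S,E^{\mydt})) \cong \Hom_R(S,\NPullback{e'}{R}(E^{\mydt}))$ as complexes of $S$-modules. Because $E^{\mydt}$ is a cophantom injective resolution of $M$ over $R$, by definition $\NPullback{e'}{R}(E^{\mydt})$ has cophantom cohomology at every $i>0$ (that is the "stably" clause in the definition of cophantom resolution), and moreover each $\NPullback{e'}{R}(E^i)$ is injective over $R$ by Lemma \ref{lem:Herinj}. So I may run the previous paragraph's argument again, this time with the complex $\NPullback{e'}{R}(E^{\mydt})$ in place of $E^{\mydt}$: Lemma \ref{lem:phantom} (applied to the truncated three-term sequences $B^i\hookrightarrow \NPullback{e'}{R}(E^i)\to \NPullback{e'}{R}(E^{i+1})$) gives that $\Hom_R(S,\NPullback{e'}{R}(E^{\mydt}))$ has cophantom cohomology at every $i>0$, hence so does $\NPullback{e'}{S}(\Hom_R(S,E^{\mydt}))$ via the Proposition \ref{pr:cobase} isomorphism. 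Since $e'$ was arbitrary, $\Hom_R(S,E^{\mydt})$ is stably cophantom at every positive degree. Combining this with the first paragraph (injectivity of the terms, vanishing in negative degrees, correct $H^0$) gives that $\Hom_R(S,E^{\mydt})$ is a cophantom injective resolution of $\Hom_R(S,M)$ over $S$, as claimed.

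The main obstacle, as flagged above, is not any deep computation but the bookkeeping of applying Lemma \ref{lem:phantom} correctly: its hypothesis "$\alpha$ injective" forces one to feed it the sequence with $U=B^i$ rather than $U=E^{i-1}$, and one must check that left-exactness of $\Hom_R(S,-)$ really does let one re-interpret property (3) of the output sequence as the cophantom-cohomology condition for the complex $\Hom_R(S,E^{\mydt})$ (i.e.\ that forming $Z^i$ and $B^i$ commutes with $\Hom_R(S,-)$ in the relevant sense — true because $\Hom_R(S,-)$ is left exact and injective resolutions have injective terms, so the relevant sequences are exact enough). The only other thing to be a little careful about is that the "stable" argument is applied degree-by-degree and level-by-level ($e'$ by $e'$), which is exactly what the definitions of \emph{stably cophantom} and \emph{cophantom resolution} demand; no uniformity in $e'$ is needed.
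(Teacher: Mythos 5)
Your proof is correct and follows essentially the same route as the paper's: the paper likewise applies Lemma~\ref{lem:phantom} to the truncated three-term sequences $\im \delta^{i-1} \hookrightarrow E^i \to \im \delta^i$ and disposes of stability via Lemma~\ref{lem:Herinj} and Proposition~\ref{pr:cobase}, exactly as you do. The extra bookkeeping you flag --- feeding Lemma~\ref{lem:phantom} an injective $\alpha$ by truncating at $B^i$ rather than using $E^{i-1}$, and identifying $\ker\beta'$ and $\im\alpha'$ with $Z^i$ and $B^i$ of $\Hom_R(S,E^{\mydt})$ --- is precisely what the paper's terser argument leaves implicit.
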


\begin{proof}
Label the maps in the cophantom injective resolution $\delta^i: E^i \ra E^{i+1}$.  Then we get the following sequences for each $i$: \[
\theta_i: (\im \delta^{i-1} \arrow{\alpha_i} E^i \arrow{\beta_i} \im \delta^i)
\]
Then each $\theta_i$ satisfies the conditions (hence also the conclusion) of the $\theta$ in Lemma~\ref{lem:phantom}.  The fact that it is stably cophantom follows similarly now using Lemma~\ref{lem:Herinj} and Proposition~\ref{pr:cobase}.
\end{proof}

\begin{corollary}\label{cor:coph}
Let $R \ra S$ be as above, and let $M$ be a finite $R$-module that has a finite $R$-injective resolution $E^{\mydot}$.  Then $\Hom_R(S,E^{\mydt})$ is a finite $S$-cophantom injective resolution of $\Hom_R(S,M)$.
\end{corollary}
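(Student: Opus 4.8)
The plan is to observe that this corollary is essentially immediate from the two preceding results, once one notes that an honest finite injective resolution already qualifies as a finite cophantom resolution. So the first step is: since $M$ is a finite $R$-module of finite injective dimension, Remark~\ref{rem.FiniteIDGivesFiniteCID} tells us that the given finite injective resolution $E^{\mydot}$ of $M$ is in fact a cophantom injective resolution of $M$ over $R$. The point there is that, by Theorem~\ref{thm:HerFrob}, $\Ext^i_R({}^eR, M) = 0$ for all $i,e>0$, so each $\NPullback{e}{R}(E^{\mydot})$ is acyclic; hence the relevant $Z^i$ already equals $B^i$ (not merely up to taking interior), and the stably-cophantom condition holds trivially.

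The second step is to invoke Proposition~\ref{pr:coph} directly with this $E^{\mydot}$: since $R \to S$ is exactly as in Lemma~\ref{lem:phantom} and $E^{\mydot}$ is a cophantom injective resolution of $M$ over $R$, the complex $\Hom_R(S, E^{\mydot})$ is a cophantom injective resolution of $\Hom_R(S,M)$ over $S$. Two small finiteness-type remarks then complete the argument. First, the length of $\Hom_R(S, E^{\mydot})$ equals the length of $E^{\mydot}$, which is finite by hypothesis, so the resolution is finite. Second, each term $\Hom_R(S, E^i)$ is an injective $S$-module, because $\Hom_R(S,-)$ is right adjoint to the exact restriction-of-scalars functor, so $\Hom_S(-,\Hom_R(S,E^i)) \cong \Hom_R(-,E^i)$ is exact; this is the same standard fact that underlies the injectivity bookkeeping in Proposition~\ref{pr:coph}.

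I expect no real obstacle here, since the substantive content has already been isolated in Proposition~\ref{pr:coph} and Remark~\ref{rem.FiniteIDGivesFiniteCID}. The only points requiring care are purely organizational: making sure we apply Proposition~\ref{pr:coph} to the correct complex (the one with $H^0(E^{\mydot}) \cong M$ and vanishing higher cohomology), and emphasizing that it is finiteness of \emph{injective} dimension — via the $\Ext$-vanishing of Theorem~\ref{thm:HerFrob} — rather than merely finiteness of cophantom injective dimension that certifies $E^{\mydot}$ as a legitimate input to Proposition~\ref{pr:coph}.
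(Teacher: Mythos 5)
Your proposal is correct and follows exactly the paper's own argument: certify $E^{\mydot}$ as a cophantom injective resolution via Remark~\ref{rem.FiniteIDGivesFiniteCID} (resting on the $\Ext$-vanishing of Theorem~\ref{thm:HerFrob}), then apply Proposition~\ref{pr:coph}. The additional bookkeeping about the length of $\Hom_R(S,E^{\mydt})$ and the injectivity of its terms is accurate but already implicit in Proposition~\ref{pr:coph}.
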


\begin{proof}
By Remark \ref{rem.FiniteIDGivesFiniteCID} (see Theorem~\ref{thm:HerFrob}), $\NPullback{e}{R}(E^\mydt)$ is an injective resolution of $\NPullback{e}{R}(M)$, hence a stably cophantom acyclic complex, so that $E^\mydot$ is a cophantom injective resolution of $M$ over $R$.  Then apply Proposition~\ref{pr:coph}.
\end{proof}

\begin{thm}[Vanishing theorem for maps of Ext]
\label{thm:VanishingForExt}
Consider a sequence of ring homomorphisms $A \hookrightarrow R \ra T$ such that $T$ is $F$-finite and regular (or simply strongly $F$-regular), $R$ is a module-finite torsion-free extension of $A$, $A$ is a domain, and both $A$ and $R$ are $F$-finite.  Let $M$ be a finite $A$-module of finite injective dimension.  Then for all $i\geq 1$, the natural maps $\Ext^i_A(T, M) \ra \Ext^i_A(R, M)$ are zero.
\end{thm}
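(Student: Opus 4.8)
The plan is to dualize the Hochster--Huneke proof of the vanishing theorem for maps of Tor through the dictionary of Remark~\ref{rem.FormalDuality}, replacing ``free resolution over $A$, base changed up to $R$'' by ``injective resolution over $A$, pushed forward by $\Hom_A(R,-)$''. Fix a finite $A$-injective resolution $E^\bullet$ of $M$ (it exists since $M$ has finite injective dimension over $A$), and set $C^\bullet := \Hom_A(R,E^\bullet)$. The inclusion $A \hookrightarrow R$ is a module-finite torsion-free extension of $F$-finite rings, and since $A$ is a domain every nonzero element of $A$ is a nonzerodivisor on $R$ and hence avoids the minimal primes of $R$; thus $A^\circ \subseteq R^\circ$, and the hypotheses of Lemma~\ref{lem:phantom} and Corollary~\ref{cor:coph} are met. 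Therefore $C^\bullet$ is a finite cophantom injective resolution of $\Hom_A(R,M)$ over $R$; in particular $H^i(C^\bullet) = \Ext^i_A(R,M)$, and $Z^i(C^\bullet)_{*R} \subseteq B^i(C^\bullet)$ for every $i \geq 1$ by cophantomness in positive degrees.

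Next I would identify the map of Ext modules concretely. By the adjunction between restriction of scalars along $A \to R$ and coinduction, there is an isomorphism of complexes $\Hom_R(T, C^\bullet) = \Hom_R\big(T,\Hom_A(R,E^\bullet)\big) \cong \Hom_A(T,E^\bullet)$, so $H^i\big(\Hom_R(T,C^\bullet)\big) = \Ext^i_A(T,M)$; moreover, under this identification the natural map $\Ext^i_A(T,M) \to \Ext^i_A(R,M)$ induced by the $A$-algebra homomorphism $R \to T$ is exactly $H^i$ of the evaluation morphism $\epsilon : \Hom_R(T,C^\bullet) \to C^\bullet$, $g \mapsto g(1)$. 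Note that we do \emph{not} assume $R \to T$ is module-finite, so Proposition~\ref{pr:coph} does not apply to $R \to T$ and we cannot conclude $\Ext^i_A(T,M) = 0$; this is why the theorem asserts only that the maps vanish.

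Finally, fix $i \geq 1$ and a cocycle $\phi \in Z^i\big(\Hom_R(T,C^\bullet)\big)$. By left-exactness of $\Hom_R(T,-)$ we have $Z^i\big(\Hom_R(T,C^\bullet)\big) = \Hom_R\big(T, Z^i(C^\bullet)\big)$. Since $T$ is strongly $F$-regular it is $F$-coregular by Proposition~\ref{pr:Fcostrong}, so every $T$-module equals its tight interior over $T$; in particular $\phi \in \Hom_R\big(T, Z^i(C^\bullet)\big)_{*T}$. Co-persistence (Theorem~\ref{thm:copers}, applicable because $R$ and $T$ are $F$-finite), applied to the $R$-module $Z^i(C^\bullet)$, shows that $\epsilon$ carries $\Hom_R\big(T,Z^i(C^\bullet)\big)_{*T}$ into $Z^i(C^\bullet)_{*R}$; hence $\epsilon(\phi) = \phi(1) \in Z^i(C^\bullet)_{*R} \subseteq B^i(C^\bullet)$, the last inclusion by cophantomness of $C^\bullet$ in degree $i$. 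Thus the class of $\epsilon(\phi)$ in $H^i(C^\bullet) = \Ext^i_A(R,M)$ is zero, and since $\phi$ was an arbitrary cocycle, the map $\Ext^i_A(T,M) \to \Ext^i_A(R,M)$ is zero. I expect the main obstacle to be the bookkeeping of the second paragraph: verifying that the adjunction isomorphisms are compatible with the differentials and with the ring map $R \to T$, so that $H^i(\epsilon)$ genuinely is the asserted map of Ext modules. Everything else is a direct translation of the Tor argument.
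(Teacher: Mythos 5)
Your proposal is correct and follows essentially the same route as the paper's proof: take a finite $A$-injective resolution $E^\bullet$, use Corollary~\ref{cor:coph} to see that $\Hom_A(R,E^\bullet)$ is a cophantom injective resolution, use $F$-coregularity of $T$ (Proposition~\ref{pr:Fcostrong}) to place a cocycle in the tight interior, and apply co-persistence (Theorem~\ref{thm:copers}) together with the adjunction $\Hom_R(T,\Hom_A(R,E^i)) \cong \Hom_A(T,E^i)$ to land in $Z^i(C^\bullet)_{*R} \subseteq B^i(C^\bullet)$. The only cosmetic difference is that the paper verifies the compatibility of the adjunction with the differentials and the restriction map via an explicit commutative diagram, exactly the bookkeeping you flag at the end.
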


\begin{proof}
Let $(E^\mydt, \partial^\mydt)$ be a finite $A$-injective resolution of $M$.  For each $i > 0$, we have the induced maps $\partial^i_T : \Hom_A(T, E^i) \ra \Hom_A(T, E^{i+1}))$ and $\partial^i_R : \Hom_A(R, E^i) \ra \Hom_A(R, E^{i+1}))$.  Let $\eta \in \ker \partial^i_T$, and let $g^i: \Hom_A(T, E^i) \ra \Hom_A(R, E^i)$ be the map given by restriction.  Then since $\eta \in (\ker \partial^i_T)_{*T}$ since $T$ is $F$-coregular (Proposition~\ref{pr:Fcostrong}), we claim that Theorem~\ref{thm:copers} (co-persistence) shows that $g^i(\eta) \in (\ker \partial^i_R)_{*R}$.  We now justify this claim:

We have a commutative diagram with exact rows
\[
\xymatrix{
0 \ar[r] & \ker \partial^i_R \ar[r] & \Hom_A(R, E^i) \ar[r] & \Hom_A(R, E^{i+1}) \\
0 \ar[r] & \Hom_R(T, \ker \partial^i_R) \ar[u] \ar[r] & \Hom_R(T, \Hom_A(R, E^i)) \ar[r] \ar[u]  & \Hom_R(T, \Hom_A(R, E^{i+1})) \ar[u]  \\
0 \ar[r] & \ker \partial^i_T \ar[r] \ar[u]^{\nu} & \Hom_A(T, E^i) \ar[r] \ar[u]_{\sim} & \Hom_A(T, E^{i+1}) \ar[u]_{\sim}
}
\]
where the vertical isomorphisms are due to adjointness of $\Hom$ and tensor and so $\nu$ is an isomorphism also.  Notice furthermore that the middle vertical composition is just $g^i$.  The claim then immediately follows by co-persistence.


But by Corollary~\ref{cor:coph}, $\Hom_A(R,E^\mydt)$ is a stably cophantom injective resolution of $\Hom_A(R,M)$, which implies that $(\ker \partial^i_R)_{*R} \subseteq \im \partial^{i-1}_R$.  Thus, $g^i(\eta) = \partial^i_R(\mu)$ for some $\mu \in \Hom_A(R, E^{i-1})$.  Thus, the class of $g^i(\eta)$ is zero in $\Ext^i_A(R, M)$, completing the proof.
\end{proof}

\section{$F$-pure Cartier modules}
\label{sec.CartierModules}

Suppose that $R$ is an $F$-finite ring and $M$ is a \emph{finite} $R$-module.  Recently, M.~Blickle developed a theory of an ``algebra of $p^{-e}$-linear maps'' acting on $M$ \cite{BlickleTestIdealsViaAlgebras} and \cf \cite{SchwedeTestIdealsInNonQGor}.
Indeed, consider the object:
\[
\sC_M = R \oplus \left( \bigoplus_{e = 1}^{\infty} \Hom_R({}^e M, M) \right) = \oplus_{i \geq 0} \sC^i_M
\]
This is called the \emph{full Cartier algebra on $M$}.  It is a non-commutative graded ring (the first direct summand is degree zero) with multiplication defined by the following rule:
For $\phi \in \sC_M^e$ and $\psi \in \sC_M^d$, we define
\[
 \phi \cdot \psi := \phi \circ ({}^e \psi)
\]
More generally, Blickle considered graded subrings of $\sC_M$.  In this paper, we limit ourselves to the canonical choice of $\sC_M$.

\begin{remark}
  Alternatively, it just as natural to consider the graded ring $\sB_{M} =  \bigoplus_{i = 0}^{\infty} \Hom_R({}^e M, M)$ which only differs from $\sC_M$ only in the degree zero piece and so may in this sense include endomorphisms of $M$ not coming from multiplication.
\end{remark}

With notation as above, we set $\sC_M^{+} = \bigoplus_{i = 1}^{\infty} \Hom_R({}^e M, M)$ to be the positively graded part of $\sC_M$.  Given any submodule $N \subseteq M$, we define
\[
 \sC_M^{+} \cdot N := \sum_{e \geq 1} \sum_{\phi \in \sC_M^e} \phi({}^e N).
\]

\begin{defn}
 Given $F$-finite $R$, a finite $R$-module $M$ and $\sC_M$ as above, we say that a submodule $N \subseteq M$ is \emph{$\sC_M$-compatible} if $\phi({}^e N) \subseteq N$ for all $\phi \in \sC_M^e$ and all $e \geq 0$.  In other words, $\sC_M^+ \cdot N \subseteq N$.  We say that $N$ is \emph{$\sC_M$-fixed} if $\sC_M^{+} \cdot N = N$.
\end{defn}

In \cite{BlickleBoeckleCartierModulesFiniteness} and \cite{BlickleTestIdealsViaAlgebras}, many remarkable properties of $\sC_M$-fixed modules are studied.  First we recall a theorem which allows us to associate a fixed submodule to any compatible submodule.

\begin{thm} \cite[Proposition 2.13]{BlickleTestIdealsViaAlgebras} \cf \cite{Gabber.tStruc,LyubeznikFModulesApplicationsToLocalCohomology,HartshorneSpeiserLocalCohomologyInCharacteristicP}
 Given any $\sC_M^{R}$-compatible submodule $N \subseteq M$, the descending chain:
\[
 N \supseteq \sC_M^{+} \cdot N \supseteq \sC_M^{+} \cdot (\sC_M^{+} \cdot N) \supseteq \sC_M^{+}\cdot (\sC_M^{+}\cdot (\sC_M^{+} \cdot N)) \supseteq \dots
\]
stabilizes.  We use $\underline{N}$ to denote the stable term (note that $\underline{N}$ is by definition $\sC_M$-fixed).
\end{thm}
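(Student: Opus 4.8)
Given a $\sC_M$-compatible submodule $N\subseteq M$, the descending chain
\[
N\supseteq \sC_M^{+}\cdot N\supseteq \sC_M^{+}\cdot(\sC_M^{+}\cdot N)\supseteq\cdots
\]
stabilizes. Here $M$ is a finite module over an $F$-finite ring $R$.

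The plan is to prove a slightly more flexible statement, which is what the induction will actually need: for every $F$-finite Noetherian ring $R$, every finite $R$-module $N$, and every graded subring $\sC = R\oplus\sC^+$, $\sC^+ := \bigoplus_{e\ge1}\sC^e$, of the full Cartier algebra $\sC_N$ (so $\sC^e$ is an $R$-subbimodule of $\Hom_R({}^eN,N)$ and $\sC$ is closed under the multiplication $\phi\cdot\psi := \phi\circ{}^e\psi$), the descending chain $N_j := (\sC^+)^j\cdot N$ stabilizes. The theorem is the special case $N\subseteq M$ with $\sC^e := \{\phi|_{{}^eN}: \phi\in\sC_M^e\}$: the restriction makes sense precisely because $N$ is $\sC_M$-compatible, and then $(\sC^+)^j\cdot N$ is exactly the $j$-th term in the displayed chain. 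One first records the routine facts that each $N_j$ is again $\sC$-compatible (so the sequence is genuinely decreasing) and that, using the Cartier multiplication rule, $(\sC^+)^j\cdot N = \sum_\chi\chi({}^{\deg\chi}N)$ as $\chi$ ranges over the homogeneous elements of the $j$-fold product $(\sC^+)^j\subseteq\sC$.

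The argument is then Noetherian induction on $d := \dim\Supp N$, following Hartshorne--Speiser, Lyubeznik and Gabber. If $d\le0$ then $N$ has finite length, so the descending chain $N_j$ of $R$-submodules stabilizes by the descending chain condition. Suppose $d\ge1$ and the statement holds in support-dimension $<d$. Let $\p_1,\dots,\p_t$ be the minimal primes of $\Supp N$. Localizing at $\p=\p_i$: the module $N_\p$ has finite length over $R_\p$, the data $(\sC^e)_\p\subseteq\Hom_{R_\p}({}^e(N_\p),N_\p)$ again form a graded algebra $\sC_\p$ of the required type, and — since $\Hom$ of finite modules, formation of images, and sums and products of submodules all commute with localization — induction on $j$ gives $(N_j)_\p = ((\sC_\p)^+)^j\cdot N_\p$. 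By the descending chain condition on $N_\p$ this chain stabilizes; choosing $n_0$ larger than all $t$ resulting stabilization indices, $(N_j)_{\p_i} = (N_{n_0})_{\p_i}$ for all $j\ge n_0$ and all $i$.

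Now put $N_\infty := \bigcap_{k\ge0}N_{n_0+k}$, a finite $R$-submodule of $N_{n_0}$. Since $\phi({}^eN_{n_0+k})\subseteq N_{n_0+k+1}$ for $\phi\in\sC^e$ and $e\ge1$, the submodule $N_\infty$ is carried into itself by every element of $\sC$, and $\phi$ also sends ${}^eN_{n_0}$ into $N_{n_0}$; hence $\sC$ induces a graded algebra $\bar\sC$ of maps ${}^e\bar N\to\bar N$ on the finite module $\bar N := N_{n_0}/N_\infty$, and a direct check identifies $(\bar\sC^+)^j\cdot\bar N$ with $N_{n_0+j}/N_\infty$ for every $j$. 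Because $(N_j)_{\p_i}$ is constant for $j\ge n_0$, localization gives $(\bar N)_{\p_i}=0$ for every $i$; as $\Supp\bar N\subseteq\Supp N$ contains none of the generic points $\p_i$ of $\Supp N$, every irreducible component of $\Supp\bar N$ is properly contained in some $V(\p_i)$, so $\dim\Supp\bar N\le d-1$. By the inductive hypothesis the chain $(\bar\sC^+)^j\cdot\bar N = N_{n_0+j}/N_\infty$ stabilizes; but its intersection over all $j$ equals $\bigl(\bigcap_{k}N_{n_0+k}\bigr)/N_\infty = 0$, so it stabilizes at $0$. Thus $N_{n_0+j}=N_\infty$ for $j\gg0$, and the original chain stabilizes.

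The only place requiring genuine care is the bookkeeping that the operations $\sC^+\cdot(-)$, the $j$-fold product $(\sC^+)^j$, and the induced quotient algebra $\bar\sC$ are all compatible with localization and with the passage to $N_{n_0}/N_\infty$; the rest is the standard dimension-reduction mechanism. I also note that the induction genuinely forces one to prove the statement for arbitrary graded subalgebras $\sC$, not just for the full $\sC_M$, since the quotient algebra $\bar\sC$ is typically a proper subalgebra of the full Cartier algebra of $\bar N$.
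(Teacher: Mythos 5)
This theorem is quoted in the paper from \cite[Proposition 2.13]{BlickleTestIdealsViaAlgebras} without proof, so there is no in-paper argument to compare against; your write-up supplies a proof, and it is correct. It is essentially the standard stabilization argument of the cited sources (Gabber, Hartshorne--Speiser, Lyubeznik, Blickle): Noetherian induction on $\dim \Supp N$, with the base case by the descending chain condition on a finite-length module, localization at the minimal primes of the support to find a uniform index $n_0$, and passage to the quotient $N_{n_0}/\bigcap_k N_{n_0+k}$, which has strictly smaller support dimension and whose chain both stabilizes (by induction) and has zero intersection, forcing stabilization of the original chain. Your observation that the induction requires the statement for arbitrary graded subalgebras of the full Cartier algebra (since the induced algebra on the quotient need not be full) is exactly the right generality, and the deferred compatibility checks (localization of $\Hom$ from finite modules, ${}^e(-)$ versus localization, and the identification $(\bar{\sC}^{+})^j \cdot \bar{N} = N_{n_0+j}/N_\infty$) are all routine as claimed.
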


With this theorem, Blickle made the following definition.
\begin{defn}
 Given $R$ and $M$ as above, we define $\tau(M, \sC_M)$, if it exists, to be the unique smallest submodule of $M$ agreeing with $\underline{M}$ at the generic points (\ie minimal associated primes) of $M$.
\end{defn}

It is unclear that $\tau(M, \sC_M)$ exists and indeed, this is an open question in general (unless $R$ is finite type over a field \cite[Theorem 4.13]{BlickleTestIdealsViaAlgebras}).
Our goal is to relate $\tau(M, \sC_M)$ to $M_*$ under some mild hypotheses.

\begin{lemma}
\label{lem.InteriorIsCompatible}
Suppose that $R$ is $F$-finite and has a co-test element.
Then the submodule $M_* \subseteq M$ is a $\sC_M$-compatible.
\end{lemma}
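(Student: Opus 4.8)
I want to show that for every $e \geq 0$ and every $\phi \in \sC_M^e = \Hom_R({}^eM, M)$, we have $\phi({}^e(M_*)) \subseteq M_*$. The idea is to use the description of $M_*$ via a single co-test element $c$: since $R$ has a co-test element, Theorem~\ref{thm:cotestint} (really just the existence of co-test elements plus the definition) lets us write, for any $z \in M_*$, an expression $z = \sum_{d = d_0}^{d_1} g_d({}^d c)$ with $g_d \in \Hom_R({}^d R, M)$, and moreover $M_* = M_*[c, q_0]$ for every $q_0$, so we may take $d_0$ as large as we like. I then want to feed $\phi({}^e z)$ through a similar expression and recognize the result as again lying in $M_*[c, q']$ for suitable $q'$, for every choice of the "test data" $(c, q')$ — which, since co-test elements are cofinal and $M_* = M_*[c,1]$, suffices.

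\textbf{Key steps.} First I fix a co-test element $c$ of $R$; by Theorem~\ref{thm:cotestint} and Proposition~\ref{pr:bigtestint} it lies in $R^\circ$ and satisfies $M_* = M_*[c,1]$ for all $R$-modules. Take $z \in M_*$ and $\phi \in \Hom_R({}^eM, M)$; I must show $\phi({}^e z) \in M_*$, i.e. $\phi({}^e z) \in M_*[c,1]$. Write $z = \sum_{d} g_d({}^d c)$ with $g_d \in \Hom_R({}^d R, M)$ and with all $d \geq e$ (legitimate since $M_* = M_*[c, p^e]$). Now apply the exact functor ${}^e(-)$: the element ${}^e z \in {}^e M$ equals $\sum_d {}^e(g_d({}^d c))$, and ${}^e(g_d(-))$ is the map ${}^{e}({}^dR) = {}^{d+e}R \to {}^e M$ obtained by applying ${}^e(-)$ to $g_d$ — call it ${}^e g_d$, an $R$-linear (indeed ${}^eR$-linear) map ${}^{d+e}R \to {}^eM$ sending ${}^{d+e}(c) = {}^e({}^d c)$ to ${}^e(g_d({}^d c))$. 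Composing with $\phi$, the map $\phi \circ {}^e g_d : {}^{d+e}R \to M$ is $R$-linear, and $\sum_d (\phi \circ {}^e g_d)({}^{d+e} c) = \phi(\sum_d {}^e(g_d({}^d c))) = \phi({}^e z)$. Since each exponent $d + e$ is $\geq e \geq 0$, this exhibits $\phi({}^e z) \in M_*[c, p^0] \subseteq M_* $ — more carefully, $\phi({}^e z) \in \im(\Hom_R({}^{d+e}R, M) \to M)$ summed over $d$, hence in $M_*[c, 1]$, which equals $M_*$ because $c$ is a co-test element. The case $e = 0$ is trivial since $\sC_M^0 = R$ and $M_*$ is a submodule.

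\textbf{Main obstacle.} The only delicate point is bookkeeping with the bimodule structures: one must check that applying the functor ${}^e(-)$ to the $R$-linear map $g_d : {}^d R \to M$ really does produce an $R$-linear map ${}^{d+e}R \to {}^e M$ (using the canonical identification ${}^e({}^dR) \cong {}^{d+e}R$ of $R$-$R$ bimodules, which is where one uses that composition of Frobenius powers adds exponents), and then that post-composing with $\phi \in \Hom_R({}^eM,M)$ yields an honest element of $\Hom_R({}^{d+e}R, M)$ whose value at ${}^{d+e}c$ is $\phi({}^e z)$. All of this is formal once the bimodule conventions from Section~\ref{sec.Definitions} are unwound; there is no real arithmetic to grind through. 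One should also remark explicitly that the argument needs a co-test element only to know $M_* = M_*[c,1]$ — without it, the same computation still shows $\phi({}^e(M_*[c,q_0])) \subseteq M_*[c, q_0]$ for each fixed $(c, q_0)$, and intersecting gives the result, so in fact the co-test element hypothesis can be viewed as a convenience rather than a necessity.
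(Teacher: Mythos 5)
Your proof is correct and follows essentially the same route as the paper's: fix a co-test element $c$ so that $M_* = M_*[c,1]$, apply ${}^e(-)$ to the structural maps $g_d \colon {}^dR \to M$, and compose with $\phi$ to land back in $M_*[c,1]$ (the paper writes this module-wise rather than element-wise, but the computation is identical). Your closing observation that compatibility of each $M_*[c,q_0]$ already follows without invoking a co-test element is also accurate and a mild strengthening.
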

\begin{proof}
Let $b\geq 0$, choose $\psi \in \Hom_R({}^b M, M)$ and write $M_* = \sum_{e \geq 0} \sum_{\phi \in \Hom_R({}^eR, R)} \phi({}^e(cR))$ for some co-test-element $c \in R^{\circ}$.  Then
\[
\begin{array}{rl}
& \psi({}^bM_*)\\
 =&  \psi\left({}^b\left( \sum_{e \geq 0} \sum_{\phi \in \Hom_R({}^e R, M)} \phi({}^e (d R)) \right)\right)\\
 = & \sum_{e \geq 0} \sum_{\phi \in \Hom_R({}^e R, M)} \psi({}^b \phi({}^e (c R)))\\
\subseteq & \sum_{e \geq b} \sum_{\phi \in \Hom_R({}^e R, M)} \phi({}^e (cR))\\
\subseteq & M_*
\end{array}
\]
\end{proof}

It immediately follows that $\tau(M, \sC_M) \subseteq M_*$ if we know that $M_*$ generically agrees with $\underline{M}$.  Indeed, if $R$ is reduced and $M$ has the same support as $R$, then it is easy to see that generically $M_*$ agrees with $M$ which agrees with $\tau(M, \sC_M)$.

\begin{thm}
\label{thm.InteriorVsBlickle}
Let $R$ be an $F$-finite reduced ring and $M$ a finite $R$-module whose support is equal to the support of $R$.  Then $M_* = \tau(M, \sC_M)$.  In particular, $\tau(M, \sC_M)$ exists.
\end{thm}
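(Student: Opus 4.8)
The plan is to show the two containments $\tau(M,\sC_M) \subseteq M_*$ and $M_* \subseteq \tau(M,\sC_M)$ separately, using on one side the compatibility of $M_*$ proved in Lemma \ref{lem.InteriorIsCompatible}, and on the other side a Cartier-algebra description of $M_*$ itself. First I would observe that since $R$ is $F$-finite and reduced, co-test elements exist (Proposition \ref{pr:bigtestint}, Theorem \ref{thm:cotestint}), and since $\Supp M = \Supp R$ the generic localizations $M_{\p_i}$ at the minimal primes $\p_i$ of $R$ are nonzero; in fact $(R/\p_i)_{\p_i}$ is a field, so $M_{\p_i} = \underline{M}_{\p_i} = (M_*)_{\p_i}$ (the last equality because $M_*$ contains $cM$ for a co-test element $c$ and $c\notin\p_i$). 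Thus both $M_*$ and $\tau(M,\sC_M)$ agree with $\underline{M}$ generically. Combined with Lemma \ref{lem.InteriorIsCompatible}, which says $M_*$ is $\sC_M$-compatible, the minimality in the definition of $\tau(M,\sC_M)$ — once we know it exists — gives $\tau(M,\sC_M) \subseteq M_*$ directly; but since existence is part of the conclusion, I would instead argue that $M_*$ contains $\underline{M}$ (applying the stabilizing chain to the compatible submodule $M_*$ keeps us inside $M_*$, and $\underline{M_*}$ agrees generically with $\underline{M}$, hence by the uniqueness clause $\underline{M_*} = \tau(M,\sC_M)$-candidate), so the real content is the reverse containment.

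For $M_* \subseteq \tau(M,\sC_M)$ the key step is to rewrite $M_*$ as a $\sC_M$-fixed module. I would fix a co-test element $c\in R^\circ$ and use $M_* = M_*[c,1] = \sum_{e\geq 0}\sum_{g\in\Hom_R({}^eR,M)} R\cdot g({}^ec)$. The point is that each generator $g({}^ec)$ can be absorbed into the image of a map ${}^eM \to M$: namely, composing the surjection-type data — here one uses that $\Supp M=\Supp R$ to find, after inverting $c$, a splitting or at least a map ${}^eR \to {}^eM$ and a map ${}^eM\to M$ whose composite recovers enough of $g$ — so that $M_* = \sum_{e\geq 1}\sum_{\phi\in\sC_M^e}\phi({}^e M_*)$, i.e. $M_*$ is $\sC_M$-fixed, not merely compatible. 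Granting this, $M_*$ is a $\sC_M$-fixed submodule agreeing generically with $\underline M$, and any $\sC_M$-fixed submodule is contained in $\underline M$ (run the stabilizing chain), so $M_* \subseteq \underline M$; since $M_*$ is fixed and generically equal to $\underline M$, it is a candidate for the smallest such, and one checks it is smallest by comparing with any other such submodule via localization at the minimal primes. This yields $M_* = \tau(M,\sC_M)$ and simultaneously establishes existence.

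The main obstacle I anticipate is precisely the claim that $M_*$ is $\sC_M$-\emph{fixed} rather than only $\sC_M$-compatible: Lemma \ref{lem.InteriorIsCompatible} only gives $\sC_M^+\cdot M_* \subseteq M_*$, and upgrading this to equality requires showing every generator $g({}^ec)$ of $M_*$, with $g\in\Hom_R({}^eR,M)$, actually lies in $\sum_\phi \phi({}^e M_*)$ for $\phi\in\Hom_R({}^eM,M)$. This is where the hypothesis $\Supp M = \Supp R$ and the reducedness are essential: one needs a map ${}^e M \to M$ built from $g$ together with a "division by $c$'' trick (using that $c$ is a co-test element, so one can peel off a power of $c$ from $M_*$ and still land in $M_*$, as in the proof of Proposition \ref{pr:idem}). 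I would handle this by reducing modulo minimal primes via Proposition \ref{pr:minprimes} — so that $R/\p_i$ is an $F$-finite domain and $M$ restricts to a module with full support there — and in the domain case using generic freeness of $M_{\p_i}$ over $(R/\p_i)_{\p_i}$ to produce the needed factorization after clearing a denominator from $R^\circ$, which a sufficiently high power of the co-test element absorbs. The bookkeeping of which co-test element and which power is needed at each stage is the delicate part, but structurally it mirrors the idempotence argument already carried out in Proposition \ref{pr:idem}.
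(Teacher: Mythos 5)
There is a genuine gap at the final step of your second containment. You correctly identify that the heart of the matter is an explicit Cartier-algebra description of $M_*$, and your claim that $M_*$ is $\sC_M$-fixed is in fact true (and follows from the computation the paper carries out). But fixedness plus generic agreement with $\underline{M}$ does \emph{not} make $M_*$ the smallest such submodule, and your proposed remedy --- ``one checks it is smallest by comparing with any other such submodule via localization at the minimal primes'' --- cannot work: every candidate submodule agrees with $\underline{M}$ at the minimal primes by definition, so localizing there distinguishes nothing. A priori two $\sC_M$-compatible submodules with the correct generic behavior need not be comparable, and proving that a smallest one exists is precisely Blickle's open existence question that the theorem is resolving. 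What you actually need is the \emph{uniform} statement: for every $\sC_M$-compatible $N\subseteq M$ with $\underline{N}$ generically equal to $\underline{M}$, one has $M_*\subseteq N$. The paper gets this by first choosing $d\in R^\circ$ with $dM\subseteq N$ (possible because $M/N$ is supported off the minimal primes) and then proving the stronger inclusion $M_*\subseteq\sum_{e\geq 1}\sum_{\phi\in\sC_M^e}\phi\bigl({}^e(c^2dM)\bigr)$, where the multiplier $d$ is adapted to the target $N$; plain fixedness $M_*=\sC_M^+\cdot M_*$ gives no way to insert such a $d$.

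The mechanism you sketch for producing maps in $\sC_M^e$ from maps $g\in\Hom_R({}^eR,M)$ --- exploiting $\Supp M=\Supp R$, working modulo minimal primes, and clearing a denominator from $R^\circ$ --- is essentially the right one (the paper builds a generically split injection $\bigoplus_i(R/\p_i)^{\oplus n_i}\hookrightarrow M$ and uses that some $x\in R^\circ$ kills $\Ext^1$ of the cokernel to extend maps from ${}^eG$ to ${}^eM$), and the extra factors of the test element absorb the denominators exactly as you anticipate. So the construction is salvageable; the logical endgame is not. Also, your parenthetical claim in the first paragraph that ``$M_*$ contains $\underline{M}$'' does not follow from running the stabilizing chain inside the compatible submodule $M_*$ (that only yields $\underline{M_*}\subseteq M_*$), though this does not affect the easy containment $\tau(M,\sC_M)\subseteq M_*$ once existence is known.
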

\begin{proof}
Fix $c \in R^{\circ}$ a big test-element so that
\[
M_* = M^c := \sum_{e \geq 0} \sum_{\phi \in \Hom_R({}^eR, R)} \phi({}^e(cR)).
\]
Suppose first that $N$ is any $\phi$-compatible submodule such that $\underline{N}$ agrees generically with $\underline{M}$ (which automatically generically agrees with $M$ since we are using the full algebra $\sC_M$) and choose $d \in R^{\circ}$ such that $d \underline{M} \subseteq dM \subseteq \underline{N} \subseteq N$.  It follows that \[
\sum_{e \geq 0} \sum_{\phi \in \sC_M^e} \phi({}^e( c^3 dM) ) \subseteq N.
  \]
Fix $\p_1, \dots, \p_t \subseteq R$ to be the minimal primes of $R$ and notice that $c \cdot (\oplus_i R/\p_i) \subseteq R$ since $c$ is a test element.  By our assumption on the support of $M$, we know that $M_{\p_i}$ is a non-zero $R_{\p_i} = R_{\p_i}/\p_i R_{\p_i}$-vector space.  We then claim we obtain an inclusion map
\[
G := \oplus_{i=1}^t (R/ \p_i)^{\oplus n_i} \xrightarrow{\beta} M
\]
which is an isomorphism at the $R_{\p_i}$.

To see this, for each $i = 1, \dots, t$ set $\ba_i = \prod_{j \neq i} \bp_j$ and first observe that $\ba_i \cdot M_{\bp_i} = M_{\bp_i}$ since the $\bp_i$ are minimal.  Now, $\ba_i \cdot M_{\bp_i} = M_{\bp_i}$ is a finite dimensional $R_{\bp_i}$-vector space, and so we may choose elements $m_{i, 1}, \dots, m_{i, n_i} \in\ba_i \cdot M$ whose image in $M_{\bp_i}$ form a basis.  Notice that if $i \neq j$, then the image of $m_{i, s}$ in $M_{\bp_j}$ is zero.
Consider the map $\delta : \oplus_{i=1}^t (R/ \p_i)^{\oplus n_i} \to M$ in which each standard basis element is sent to $m_{i, j}$.  This is clearly generically an isomorphism (although it need not be surjective non-generically).  On the other hand we now consider injectivity.  Any element of $\oplus_{i=1}^t (R/ \p_i)^{\oplus n_i}$ is non-zero after localizing at some $\p_i$, and so if $\delta$ sends an element to zero, then since $\delta_{\p_i}$ is an isomorphism, the element had to have been zero to begin with.  This completes the proof that $\delta$ is injective.

Set the cokernel of $\beta$ to be $C$.  Choose $x \in R^{\circ}$ that annihilates $C$, then $x \cdot \Ext^1({}^e C, M) = 0$ for all $e \geq 0$ as well.  Set $H_e = \Hom_R({}^e G, M)$.  Since we have the exact sequence
\[
\dots \to \Hom_R({}^e M, M) \to H_e = \Hom({}^e G, M) \to \Ext^1({}^e C, M) \to \dots
\]
and $x$ annihilates  $\Ext^1({}^e C, M)$, we immediately see that every element of $x \cdot H_e = x \cdot \Hom_R({}^e \left( \oplus_{i=1}^t (R/\p_i)^{\oplus n_i} \right), M)$ extends to an element of $\sC_M^e = \Hom_R({}^e M, M)$.


Now observe that
\[
\begin{array}{rcl}
&  & \sum_{e \geq 1} \sum_{\phi \in x \cdot H_e} \phi\left({}^e (c^3d (\oplus_i (R/\p_i)^{\oplus n_i})))\right) \\
& = & \sum_{e \geq 1} \sum_{\phi \in H_e} \phi\left({}^e (c^3 dx (\oplus_i (R/\p_i)^{\oplus n_i})))\right) \\
& = & \sum_{e \geq 1} \sum_{i=1}^t \sum_{\phi_i \in \Hom_R({}^e R/\p_i, M)} \phi_i\left({}^e (c^3 dx R/\p_i))\right) \\
& = & \sum_{e \geq 1} \sum_{\phi \in \Hom_R({}^e (\oplus_i R/\p_i), M)} \phi\left({}^e (c^2 dx R))\right)\\
& = & \sum_{e \geq 1} \sum_{\phi \in \Hom_R({}^e (c \cdot \oplus_i R/\p_i), M)} \phi\left({}^e (c dx R))\right)\\
& \supseteq & \sum_{e \geq 1} \sum_{\phi \in \Hom_R({}^e R, M)} \phi({}^e( cdx R))\\
& = & M_*
\end{array}
\]
Putting this together, we obtain
\[
M_* \subseteq \sum_{e \geq 0} \sum_{\phi \in x \cdot H_e} \phi\left({}^e ( c^3d \bigoplus_{i} (R/\p_i)^{\oplus n_i})\right)) \subseteq \sum_{e \geq 0} \sum_{\phi \in \sC_M^e} \phi({}^e ( c^2dM)) \subseteq N
\]
as desired.
\end{proof}

\begin{remark}
The condition that $M$ has the same support as $R$ is needed because in M.~Blickle's definition of $\tau(M, \sC_M)$, the minimal primes that matter are the minimal primes of the support of $M$.  In tight closure theory, the minimal primes that matter are the minimal primes of $R$.  Thus in order to make these notions coincide, we need to line these primes up.  One could modify the definition $\tau(M, \sC_M)$ to be the smallest module coinciding with $\underline{M}$ at the minimal primes of $R$ and obtain more general versions of the above result.
\end{remark}

\section{Transformation rules for interiors}
\label{sec.TransformationRulesForInteriors}

In this section we prove additional transformation rules for tight interiors (and so in particular for big test ideals) under ring maps, Corollary \ref{cor.TauTransformsUnderFiniteExtensions}.  This is a corollary of Theorem \ref{thm:copers} (co-persistence) and the theory of Cartier modules developed in Section \ref{sec.CartierModules}.  In the special case that of tight interior of rings ({\itshape{i.e.}} for test ideals), this result is both complementary to, and subsumes special cases of, the main results of \cite{SchwedeTuckerTestIdealFiniteMaps}.  In particular, from this perspective it seems that the transformation rules for test ideals described in \cite{SchwedeTuckerTestIdealFiniteMaps} should be viewed as a sort of persistence.  It would be interesting to develop a theory which contains both of these results as corollaries.


\begin{prop}
\label{prop.StableTakenToStable}
Suppose that $R$ is a ring of characteristic $p > 0$ such that and $R \subseteq S$ an extension.  Suppose that $M$ is an $S$-module and $N \subseteq M$ is $\sC_{M}$-compatible (in other words, for every $S$-linear map $\psi : {}^eM \to M$, $\psi({}^eN) \subseteq N$).  Fix an $R$-module $L$ and consider the $R$-submodule of $L$
\[
E := \sum_{e \geq 0} \sum_{\phi} \phi( {}^e N )
\]
where the inner sum runs over $\phi \in \Hom_R( {}^eM, L)$.  Then $E \subseteq L$ is $\sC_{L}$-compatible.
\end{prop}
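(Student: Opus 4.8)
The plan is to verify the defining condition of $\sC_L$-compatibility directly: given an arbitrary $R$-linear map $\eta : {}^b L \to L$ (for some $b \geq 0$), I must show $\eta({}^b E) \subseteq E$. Since ${}^b(-)$ is additive and exact and the formation of ${}^b(-)$ commutes with the sums defining $E$, I can write
\[
\eta({}^b E) = \sum_{e \geq 0} \sum_{\phi} \eta\bigl( {}^b \phi({}^{b+e} N) \bigr),
\]
where $\phi$ ranges over $\Hom_R({}^e M, L)$ and I have used that ${}^b({}^e M) = {}^{b+e} M$ and likewise for $N$. So it suffices to show that for each fixed $e$ and each $\phi \in \Hom_R({}^e M, L)$, the composite $\eta \circ {}^b\phi : {}^{b+e} M \to L$ lands, when restricted to ${}^{b+e} N$, inside a single summand of $E$ of degree $b+e$ — in fact it suffices to observe that $\eta \circ {}^b\phi$ is itself an element of $\Hom_R({}^{b+e} M, L)$.

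The key step is therefore to check that $\eta \circ {}^b\phi$ is genuinely $R$-linear as a map ${}^{b+e}M \to L$. Here ${}^b\phi : {}^{b+e}M = {}^b({}^e M) \to {}^b L$ is the map obtained by applying the (exact, additive) functor ${}^b(-)$ to $\phi$; it is a map of $R$-$R$-bimodules by functoriality, hence in particular left $R$-linear. Composing with the left $R$-linear map $\eta : {}^b L \to L$ gives a left $R$-linear map ${}^{b+e} M \to L$, i.e. an element of $\Hom_R({}^{b+e} M, L)$. (One does have to be slightly careful about which $R$-module structure on ${}^b({}^e M)$ is being used and to check it agrees with the standard one on ${}^{b+e}M$ coming from $r\cdot {}^{b+e}x = {}^{b+e}(r^{p^{b+e}} x)$; this is the same bookkeeping that appears implicitly in the definition of the multiplication on $\sC_M$, namely $\phi\cdot\psi = \phi\circ({}^e\psi)$, and it is routine.)

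Granting that, the conclusion is immediate: since $N$ is $\sC_M$-compatible, $\phi({}^{b+e} N) \subseteq L$ need not be controlled, but what we actually need is different — we need $\eta\circ{}^b\phi$ applied to ${}^{b+e} N$ to sit inside $E$, and that holds because $\eta \circ {}^b\phi \in \Hom_R({}^{b+e}M, L)$, so $(\eta\circ {}^b\phi)({}^{b+e}N) \subseteq \sum_{\phi' \in \Hom_R({}^{b+e}M, L)} \phi'({}^{b+e}N)$, which is one of the summands of $E$. Summing over $e$ and over $\phi$ gives $\eta({}^b E) \subseteq E$, as required.

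I expect the only real obstacle to be the bookkeeping with bimodule structures in the paragraph above — verifying that applying ${}^b(-)$ to $\phi \in \Hom_R({}^eM, L)$ and then post-composing with $\eta$ produces an honest element of $\Hom_R({}^{b+e}M, L)$, with the standard $R$-module structure. Everything else (exactness of ${}^b(-)$, commuting ${}^b(-)$ past the sums, the formal manipulation of indices $e \mapsto b+e$) is formal. Note the analogy with Lemma \ref{lem.InteriorIsCompatible}, whose proof is essentially this same argument in the case $N = cR \subseteq M = R$, $L = R$; the present statement is just the version with $M$, $N$, $L$ decoupled and $N$ assumed $\sC_M$-compatible rather than defined by a test element.
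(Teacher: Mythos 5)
Your proposal is correct and is essentially identical to the paper's proof: both rewrite $\eta({}^b E)$ as $\sum_{e,\phi} (\eta\circ{}^b\phi)({}^{b+e}N)$ and observe that each composite $\eta\circ{}^b\phi$ lies in $\Hom_R({}^{b+e}M,L)$, so its image of ${}^{b+e}N$ sits inside the degree-$(b+e)$ summand of $E$. Your explicit verification of the $R$-linearity bookkeeping, and your observation that the $\sC_M$-compatibility of $N$ is not actually invoked in this argument, are both accurate (the paper leaves the former implicit).
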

\begin{proof}
Fix any $R$-linear map $\beta : {}^dL \to L$, then
\[
\begin{array}{rcl}
\beta({}^dE) & = & \beta \left( {}^d \left( \sum_{e \geq 0} \sum_{\phi} \phi( {}^eN ) \right) \right)\\
 & = & \sum_{e \geq 0} \sum_{\phi} \beta \left( {}^d \phi( {}^eN ) \right)\\
 &  = & \sum_{e \geq 0} \sum_{\phi} \beta \circ ({}^d\phi)( {}^{e+d} N )\\
 & \subseteq & E
\end{array}
\]
where again $\phi$ runs over $\Hom_R( {}^eM, L)$.
\end{proof}

\begin{corollary}
\label{cor.PropEasyContainmentOfTauForFinite}
Assume that $R \subseteq S$ is a finite extension of $F$-finite reduced rings.  Additionally suppose that $L$ is a finite $R$-module whose support equals $\Spec R$ and $M$ is a finite $S$-module whose support equals $\Spec S$, then
\[
L_{*R} \subseteq \sum_{e \geq 0} \left( \sum_{\phi \colon {}^eM \to L} \phi( {}^e (M_{*S}) ) \right).
\]
\end{corollary}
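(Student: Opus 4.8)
The plan is to recognize this as the ``easy'' containment in the finite-map transformation rule, which should follow by combining co-persistence (Theorem \ref{thm:copers}) with Lemma \ref{lem.ModuleMapsRespectInterior}. First I would set $N := M_{*S} \subseteq M$, which by Lemma \ref{lem.InteriorIsCompatible} is a $\sC_M$-compatible submodule of $M$ (note $R$, being $F$-finite reduced, has a big test element, hence a co-test element by Theorem \ref{thm:cotestint}, and likewise $S$). Then I would apply Proposition \ref{prop.StableTakenToStable} with this $N$ and the given $R$-module $L$ to get that the $R$-submodule
\[
E := \sum_{e \geq 0} \sum_{\phi\colon {}^eM \to L} \phi({}^e(M_{*S}))
\]
is $\sC_L$-compatible in $L$. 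So the right-hand side of the claimed inclusion is a $\sC_L$-compatible submodule of $L$; the task is to show it contains $L_{*R}$.

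The core of the argument is to produce enough maps ${}^e M \to L$ whose images (of ${}^e$ applied to $M_{*S}$) sweep out $L_{*R}$. The mechanism should be co-persistence applied to the inclusion $j\colon R \hookrightarrow S$. Concretely, $\Hom_R(S, L)$ is an $S$-module, and by Theorem \ref{thm:copers} the evaluation map $\epsilon\colon \Hom_R(S,L) \to L$ restricts to $\epsilon'\colon \Hom_R(S,L)_{*S} \to L_{*R}$. Dually to the surjectivity argument in Proposition \ref{pr:cocontract}, I would try to show $\epsilon'$ is surjective here using the finiteness and reducedness: pick $c \in R^{\circ}$ a big test element shared by $R$ and $S$ (possible since $R_c$ and $S_c$ can be taken regular for suitable $c$), pick a nonzero $R$-linear $f\colon S \to R$, and push test-interior elements of $L$ back through $f$ as in the proof of \ref{pr:cocontract}. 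This gives $L_{*R} = \epsilon'(\Hom_R(S,L)_{*S})$. Next I would identify $\Hom_R(S,L)_{*S}$: since the support of $M$ is all of $\Spec S$, there is (as in the proof of Theorem \ref{thm.InteriorVsBlickle}) a generically-iso inclusion $G = \oplus_i (S/\q_i)^{\oplus n_i} \hookrightarrow M$ with cokernel killed by some $x \in S^{\circ}$; using that $\Hom_R(S, -)$ and $\Ext^1_R({}^e(-), -)$ behave well and the standard conductor trick, maps ${}^e S \to \Hom_R(S,L)$ — equivalently $R$-linear maps ${}^e S \to L$ — can be built from, and have images contained in images of, maps ${}^e M \to L$ after multiplying by test elements. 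Chasing $\Hom_S$–$\Hom_R$ adjunction ($\Hom_S({}^eS, \Hom_R(S,L)) \cong \Hom_R({}^eS, L)$) throughout then shows every generator of $\Hom_R(S,L)_{*S}$, evaluated at $1$, lies in $\sum_e \sum_\phi \phi({}^e(M_{*S}))$, which together with surjectivity of $\epsilon'$ gives $L_{*R} \subseteq E$.

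The main obstacle I anticipate is the bookkeeping in the second half: carefully converting between $S$-linear maps into $\Hom_R(S,L)$ and $R$-linear maps into $L$, and then further lifting maps out of $G = \oplus_i(S/\q_i)^{\oplus n_i}$ to maps out of $M$ (using the $\Ext^1$-annihilator argument from \ref{thm.InteriorVsBlickle}) while keeping track of which test-element powers are needed to stay inside $M_{*S}$ versus merely inside $M$. Everything is ``soft'' — left-exactness of $\Hom$, adjunction, and the fact that co-test elements absorb these auxiliary factors by Theorem \ref{thm:cotestint} — but the indices and the choice of the single element $c^2 d x$ (or similar) that works uniformly must be arranged with some care, exactly as in the proof of Theorem \ref{thm.InteriorVsBlickle}. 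I would not expect any genuinely new difficulty beyond assembling those two prior results.
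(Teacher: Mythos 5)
You correctly identify the first ingredient: with $N = M_{*S}$ (which is $\sC_M$-compatible by Lemma \ref{lem.InteriorIsCompatible}), Proposition \ref{prop.StableTakenToStable} shows that the right-hand side $E$ is a $\sC_L$-compatible submodule of $L$. But you then set that fact aside and never use the one further observation that finishes the proof: by Theorem \ref{thm.InteriorVsBlickle}, $L_{*R} = \tau(L,\sC_L)$ is the \emph{unique smallest} $\sC_L$-compatible submodule of $L$ agreeing with $L$ at the generic points of $\Spec R$. So all that remains is to check that $E$ agrees with $L$ generically, which is immediate: at a minimal prime $\eta$ of $R$, the ring $S_\eta$ is a finite product of fields, hence $(M_{*S})_\eta = M_\eta$, and then $\sum_\phi \phi({}^e M_\eta) = L_\eta$ because ${}^e M_\eta$ and $L_\eta$ are nonzero finite-dimensional vector spaces over the field $R_\eta$. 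That is the entire proof in the paper; it is a two-line deduction from minimality.

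Your substitute for this step --- surjectivity of the restricted evaluation map $\epsilon'\colon \Hom_R(S,L)_{*S} \to L_{*R}$ followed by showing $\epsilon(\Hom_R(S,L)_{*S}) \subseteq E$ --- has two genuine gaps. First, Proposition \ref{pr:cocontract} gives surjectivity of $\epsilon'$ only when $R$ is a domain and $S$ is torsion-free over $R$; here $R$ is merely reduced, and a finite extension of reduced rings need not be torsion-free (e.g.\ $k[x] \hookrightarrow k[x]\times k$, $f \mapsto (f,f(0))$), so you would first have to reduce to minimal primes of $R$ and, for each, choose a minimal prime of $S$ contracting to it --- none of which appears in your sketch. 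Second, and more seriously, converting elements of $\Hom_R(S,L)_{*S}$ into sums $\sum\phi({}^e m)$ requires the arguments $m$ to lie in $M_{*S}$, not merely in $M$; the devices you import from the proof of Theorem \ref{thm.InteriorVsBlickle} (the inclusion $\oplus_i (S/\q_i)^{\oplus n_i} \hookrightarrow M$ and the $\Ext^1$ annihilator) only produce elements of ${}^e M$, and you never address how to force them into ${}^e(M_{*S})$ (one can, e.g.\ using that $c_S M \subseteq M_{*S}$ for a co-test element $c_S$ of $S$, but this is precisely the bookkeeping you defer). The route could likely be completed, but as written it is incomplete, and it misses the short argument that your own first step was setting up.
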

\begin{proof}
By Theorem \ref{thm.InteriorVsBlickle} $L_{*R}$ is the unique smallest submodule of $L$ which agrees with $L$ at the generic points of $R$ and which is $\sC_{L}$-compatible.  Therefore, we merely need to see that the submodule $E$ defined above in Proposition \ref{prop.StableTakenToStable} also agrees with $L$ at the generic points of $\Spec R$.

Observe that if $\eta$ is a generic point of $\Spec R$, then $S_{\eta}$ is a finite direct sum of fields and thus $(M_*)_{\eta} = M_{\eta}$.
Thus we may assume that $R$ is a field, $S$ is a finite direct sum of finite extension fields and $L$ and $M$ are finite $R$ and $S$-modules, respectively.  Then $E := \sum_{e \geq 0} \sum_{\phi \colon {}^eM \to L} \phi( {}^e M )$ is clearly equal to $L$ since ${}^e M$ and $L$ are both finite dimensional $R$-vector spaces.
\end{proof}

We first need the following lemma.

\begin{lemma}
\label{lem.MInteriorPartiallyCommutesWithFrobenius}
For any $R$-module $M$ we have a containment ${}^e(M_*) \subseteq ({}^e M)_*$.
\end{lemma}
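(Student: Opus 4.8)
The plan is to unwind the definition of $(-)_*$ on each side and transport a representing sum across the Frobenius functor ${}^e(-)$. Since ${}^e(M_*)$ is, as a set, just $\{{}^ez : z \in M_*\}$, it is enough to prove that ${}^ez \in ({}^eM)_*$ for every $z \in M_*$.

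So first I would fix $z \in M_*$ and, in order to test membership in $({}^eM)_*$, fix an arbitrary $c \in R^\circ$ and an arbitrary integer $e_0 \geq 0$. Because $M_* \subseteq M_*[c, p^{e_0}]$ by the very definition of $M_*$, I can write $z = \sum_{i=1}^{n} \phi_i({}^{e_i}c)$ for suitable integers $e_i \geq e_0$ and $R$-linear maps $\phi_i : {}^{e_i}R \to M$.

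The main step is to push each $\phi_i$ forward along ${}^e(-)$. Applying the functor ${}^e(-)$ to $\phi_i$ and composing with the canonical stacking isomorphism ${}^{e+e_i}R \cong {}^e({}^{e_i}R)$ of left $R$-modules (which carries ${}^{e+e_i}r$ to ${}^e({}^{e_i}r)$), I obtain $R$-linear maps $\psi_i : {}^{e+e_i}R \to {}^eM$ satisfying $\psi_i({}^{e+e_i}r) = {}^e(\phi_i({}^{e_i}r))$ for all $r \in R$. Evaluating at $r = c$ and summing gives
\[
\sum_{i=1}^{n} \psi_i({}^{e+e_i}c) \;=\; \sum_{i=1}^{n} {}^e\!\left(\phi_i({}^{e_i}c)\right) \;=\; {}^e\!\left(\sum_{i=1}^{n} \phi_i({}^{e_i}c)\right) \;=\; {}^ez .
\]
Since $e + e_i \geq e_0$ for each $i$, this exhibits ${}^ez$ as a member of $({}^eM)_*[c, p^{e_0}]$; as $c \in R^\circ$ and $e_0$ were arbitrary, ${}^ez \in ({}^eM)_*$, which is what we want.

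The only point requiring genuine care --- and hence the ``main obstacle,'' though it is really just bookkeeping --- is the handling of the various module structures. One must regard ${}^eM$ as a \emph{left} $R$-module via $r \cdot {}^em = {}^e(r^{p^e}m)$, and then check two things directly from the bimodule rule $r \cdot {}^ex \cdot s = {}^e(r^q s x)$: that ${}^e(-)$ sends an $R$-linear map between such modules to an $R$-linear map, and that the identification ${}^{e+e_i}R \cong {}^e({}^{e_i}R)$ is $R$-linear. (Equivalently, one can bypass the functoriality language and simply \emph{define} $\psi_i$ by $\psi_i({}^{e+e_i}r) := {}^e(\phi_i({}^{e_i}r))$ and verify $R$-linearity by hand.) Both verifications are short computations, and no hypotheses beyond the standing assumption that $R_\red$ is $F$-finite --- which is what makes $({}^eM)_*$ defined in the first place --- are needed.
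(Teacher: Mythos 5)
Your proof is correct and follows essentially the same route as the paper: unwind $z \in M_*[c,p^{e_0}]$ as a sum $\sum_i \phi_i({}^{e_i}c)$, apply ${}^e(-)$ to each $\phi_i$ to get $R$-linear maps ${}^{e+e_i}R \to {}^eM$, and observe that $\sum_i ({}^e\phi_i)({}^{e+e_i}c) = {}^ez$ with $e+e_i \geq e_0$. Your extra care about the left $R$-module structures and the $R$-linearity of the stacked maps is exactly the point the paper leaves implicit in its remark that ${}^e\phi_i \in \Hom_{{}^eR}({}^{e+e_i}R, {}^eM)$.
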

\begin{proof}
Choose $z \in M_*$.
For any $c \in R^{\noMinPrime}$ and $e_0 > 0$, there exists $e_1, \dots, e_n > e_0$ and $\phi_i \in \Hom_{R}({}^{e_i} R, M)$ such that $z = \sum_{i = 1}^n \phi_i({}^{e_i} c)$.  Thus ${}^ez = \sum_{i = 1}^n ({}^e\phi_i)({}^{e+e_i} c)$.  Note that ${}^e \phi_i \in \Hom_{{}^eR}({}^{e_i+e} R, {}^eM)$.  But this immediately implies that ${}^ez \in ({}^e M)_*$.
\end{proof}

Combining this with co-persistence, we obtain the following persistence-like statement which is interesting on its own.

\begin{prop}
\label{prop.InteriorTransformsUnderExtensions}
Suppose that $R$ is a ring of characteristic $p > 0$ such that $R_{\red}$ is $F$-finite and $R \to S$ is a ring map with $S_{\red}$ also $F$-finite.  Fix $M$ to be an $S$-module and $L$ to be an $R$-module.  Then:
\begin{equation}
\label{eq.TauDescriptionForFiniteMaps}
L_{*R} \supseteq \sum_{e \geq 0} \sum_{\phi} \phi( {}^e (M_{*S}) )
\end{equation}
where $\phi$ ranges over all elements of $\Hom_{R}( {}^e M, L)$.
\end{prop}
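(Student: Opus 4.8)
The plan is to deduce the containment directly from co-persistence (Theorem \ref{thm:copers}) together with Lemma \ref{lem.MInteriorPartiallyCommutesWithFrobenius} and the functoriality Lemma \ref{lem.ModuleMapsRespectInterior}. Fix an element $e \geq 0$ and a map $\phi \in \Hom_R({}^eM, L)$; it suffices to show that $\phi({}^e(M_{*S})) \subseteq L_{*R}$, and then sum over all such $e$ and $\phi$. The idea is to realize $\phi$ as a composite of maps each of which we already know respects $(-)_*$.

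First I would factor the situation through the ring map $j \colon R \to S$. We regard ${}^eM$ as an $S$-module (via the $R$-$R$-bimodule structure extended appropriately, or more precisely we work over ${}^eS$ and use the identification of $\Hom_R({}^eM,L)$ with an $R$-linear evaluation). The key observation is that $\phi \colon {}^eM \to L$ is an $R$-linear map, hence by adjointness corresponds to an $S$-linear (indeed ${}^eS$-linear) map ${}^eM \to \Hom_R(S,L)$ whose composition with the evaluation map $\epsilon \colon \Hom_R(S,L) \to L$ recovers $\phi$. Explicitly, $\phi$ determines $\tilde\phi \in \Hom_S({}^eM, \Hom_R(S,L))$ and $\phi = \epsilon \circ \tilde\phi$ after the appropriate identifications. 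Now take $z \in M_{*S}$. By Lemma \ref{lem.MInteriorPartiallyCommutesWithFrobenius}, ${}^ez \in ({}^eM)_{*S}$ (using that ${}^eM$ is an $S$-module and that the lemma applies over $S$, whose reduction is $F$-finite). Since $\tilde\phi$ is $S$-linear, Lemma \ref{lem.ModuleMapsRespectInterior} gives $\tilde\phi({}^ez) \in (\Hom_R(S,L))_{*S}$. Finally, co-persistence (Theorem \ref{thm:copers}), applied to the ring map $R \to S$ and the $R$-module $L$, tells us that $\epsilon$ restricts to a map $(\Hom_R(S,L))_{*S} \to L_{*R}$, so $\phi({}^ez) = \epsilon(\tilde\phi({}^ez)) \in L_{*R}$. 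Summing over $e$ and $\phi$ yields the claimed containment.

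The main obstacle I anticipate is the bookkeeping in the adjointness identification: one must check carefully that the $R$-linear map $\phi \colon {}^eM \to L$ really does factor as $\epsilon \circ \tilde\phi$ with $\tilde\phi$ genuinely $S$-linear, being careful about which side of ${}^eS$ the module structures live on, and that the Frobenius-twisted module ${}^eM$ over $S$ is the right object for applying Lemma \ref{lem.MInteriorPartiallyCommutesWithFrobenius}. A clean way to organize this is to note that $\Hom_R({}^eM, L) \cong \Hom_S({}^eM, \Hom_R(S,L))$ canonically (Hom-tensor adjunction, since ${}^eM$ is naturally an $S$-module through its right ${}^eS$-action and $S \to {}^eS$), and that under this isomorphism evaluation-at-$1$ followed by the original map is the identity on the relevant piece. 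Everything else is a formal chaining of the three cited results, exactly in the spirit of Remark \ref{rem.FormalDuality}.
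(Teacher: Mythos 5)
Your proposal is correct and follows essentially the same route as the paper's proof: the adjunction $\Hom_R({}^eM,L)\cong\Hom_S({}^eM,\Hom_R(S,L))$ factoring $\phi$ as $\epsilon\circ\tilde\phi$, then Lemma \ref{lem.MInteriorPartiallyCommutesWithFrobenius}, Lemma \ref{lem.ModuleMapsRespectInterior} applied to the $S$-linear $\tilde\phi$, and finally co-persistence (Theorem \ref{thm:copers}) to land in $L_{*R}$. The paper chains these three ingredients in exactly the same order, so no further comment is needed.
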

\begin{proof}
First note that $\Hom_R({}^e M, L) \cong \Hom_S({}^e M, \Hom_R(S, L))$.  Consider now $\phi \in \Hom_R({}^e M, L)$ with induced $\phi' \in \Hom_S({}^e M, \Hom_R(S, L))$.  It follows that $\phi( {}^e (M_{*S}) ) = \varepsilon\big( \phi'( {}^e (M_{*S}) ))\big)$ where $\varepsilon : \Hom_R(S, L) \to L$ is the ``evaluation-at-1'' map.  By Lemma \ref{lem.MInteriorPartiallyCommutesWithFrobenius} above, we obtain that $\varepsilon\big( \phi'( {}^e (M_{*S}) ))\big) \subseteq \varepsilon\big( \phi'( ({}^e M)_{*S} ))\big)$.  Now applying Lemma \ref{lem.ModuleMapsRespectInterior} to $\phi' : {}^e M \to \Hom_R(S, L)$ we obtain:
\[
\varepsilon\big( \phi'( {}^e (M_{*S}) ))\big) \subseteq \varepsilon\big( \phi'( ({}^e M)_{*S} ))\big) \subseteq \varepsilon(\Hom_R(S, L)_{*S}).
\]
Finally, using co-persistence (Theorem \ref{thm:copers}) we obtain $\varepsilon(\Hom_R(S, L)_{*S}) \subseteq L_{*R}$ as desired.
\end{proof}

Combining, Proposition \ref{prop.InteriorTransformsUnderExtensions} and Corollary \ref{cor.PropEasyContainmentOfTauForFinite}, we obtain the following.

\begin{corollary}
\label{cor.TauTransformsUnderFiniteExtensions}
Suppose that $R$ is a $F$-finite reduced ring of characteristic $p > 0$ and $R \subseteq S$ is a finite extension with $S$ reduced.  Further suppose that $L$ is a finite $R$-module whose support agrees with $\Spec R$ and $M$ is a finite $S$-module whose support agrees with $\Spec S$.  Then:
\begin{equation}
\label{eq.InteriorDescriptionForFiniteMaps}
L_* = \sum_{e \geq 0} \sum_{\phi} \phi( {}^e(M_*) )
\end{equation}
where $\phi$ ranges over all elements of $\Hom_{R}( {}^e M, L)$.

In particular, if $L = R$ and $M = S$, then
\begin{equation}
\label{eq.TauDescriptionForFiniteMaps}
\taub(R) = \sum_{e \geq 0} \sum_{\phi} \phi( {}^e\taub(S) )
\end{equation}
where $\phi$ ranges over all elements of $\Hom_{R}( {}^e S, R)$.
\end{corollary}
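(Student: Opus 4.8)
The plan is to obtain Corollary~\ref{cor.TauTransformsUnderFiniteExtensions} by combining the two inclusions that were already isolated, namely the ``$\subseteq$'' direction from Corollary~\ref{cor.PropEasyContainmentOfTauForFinite} and the ``$\supseteq$'' direction from Proposition~\ref{prop.InteriorTransformsUnderExtensions}. First I would verify that the hypotheses of both results are met: $R \subseteq S$ is a finite extension of $F$-finite reduced rings, $L$ is a finite $R$-module with $\Supp L = \Spec R$, and $M$ is a finite $S$-module with $\Supp M = \Spec S$. Since $R$ and $S$ are reduced and $F$-finite, Proposition~\ref{prop.InteriorTransformsUnderExtensions} applies with the ring map $R \to S$ (taking the roles of the modules there to be $M$ over $S$ and $L$ over $R$), giving the containment
\[
L_{*R} \supseteq \sum_{e \geq 0} \sum_{\phi} \phi\big( {}^e(M_{*S}) \big),
\]
where $\phi$ runs over $\Hom_R({}^e M, L)$. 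For the reverse containment, Corollary~\ref{cor.PropEasyContainmentOfTauForFinite} applies verbatim under the stated hypotheses (here the support conditions are exactly what is needed to invoke Theorem~\ref{thm.InteriorVsBlickle}), yielding
\[
L_{*R} \subseteq \sum_{e \geq 0} \sum_{\phi} \phi\big( {}^e(M_{*S}) \big).
\]
Putting these together gives equation \eqref{eq.InteriorDescriptionForFiniteMaps}.

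For the ``in particular'' statement, I would specialize to $L = R$ and $M = S$. One checks immediately that $R$ has support $\Spec R$ as an $R$-module and $S$ has support $\Spec S$ as an $S$-module, so the support hypotheses hold. By Proposition~\ref{pr:bigtestint}, $R_{*R} = \taub(R)$ and $S_{*S} = \taub(S)$, since both $R$ and $S$ are $F$-finite and reduced. Substituting into \eqref{eq.InteriorDescriptionForFiniteMaps} then gives
\[
\taub(R) = \sum_{e \geq 0} \sum_{\phi} \phi\big( {}^e \taub(S) \big),
\]
with $\phi$ ranging over $\Hom_R({}^e S, R)$, which is \eqref{eq.TauDescriptionForFiniteMaps}.

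I do not expect any genuine obstacle here: the corollary is purely a matter of assembling the two inequalities that have already been proved, plus the identification of $R_*$ with the big test ideal. The only point requiring a moment's care is making sure the modules and ring maps are fed into Proposition~\ref{prop.InteriorTransformsUnderExtensions} and Corollary~\ref{cor.PropEasyContainmentOfTauForFinite} in the correct roles (the proposition is stated for a general ring map $R \to S$ while the corollary wants a finite extension), but since a finite extension of reduced $F$-finite rings is in particular such a ring map, this is automatic. The work in this section was all done in the preceding results, so the proof of the corollary itself is essentially a one-line citation of Proposition~\ref{prop.InteriorTransformsUnderExtensions}, Corollary~\ref{cor.PropEasyContainmentOfTauForFinite}, and Proposition~\ref{pr:bigtestint}.
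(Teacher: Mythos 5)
Your proposal is correct and matches the paper exactly: the corollary is obtained by combining the containment ``$\supseteq$'' from Proposition~\ref{prop.InteriorTransformsUnderExtensions} with the containment ``$\subseteq$'' from Corollary~\ref{cor.PropEasyContainmentOfTauForFinite}, and the specialization $L = R$, $M = S$ uses $R_* = \taub(R)$ from Proposition~\ref{pr:bigtestint}. No gaps.
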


\section{Test ideals, conductors, normalization and minimal primes}\label{sec:conductor}
In this section we explore test ideals of non-normal rings.  Earlier we showed how the \tint\ of a ring or module behaved modulo minimal prime ideals.  We now expand upon those ideas relating the test ideal (in other words $R_*$) with the test ideal of the normalization of $R$ in its total field of fractions.  As an application, we are able to prove that the big and finitistic test ideal agree in a non-normal ring if the normalization of that ring is strongly $F$-regular.  To do this, we apply the results of the previous section, Section \ref{sec.TransformationRulesForInteriors}.

Throughout this section, we fix a reduced Noetherian ring $R$ (not always of characteristic $p > 0$).  We let $\p_1, \dotsc, \p_n$ be its set of minimal primes, and we let $\bc=\bc(R)$ be the conductor of $R$.  For each $1\leq i \leq n$, let $\ia_i := \ann_R \p_i = \bigcap_{j \neq i} \p_j$.  Recall that $\taub(R) \subseteq \taufg(R)$ by definition.

\begin{remark}
If $R$ has equal characteristic (in other words, if it contains a field) then we can define tight closure of ideals \cite{HochsterHunekeTightClosureInEqualCharactersticZero} \cite[Appendix by M.~Hochster]{HunekeTightClosureBook}.  One can then define finitistic test ideals, $\tau_{\textnormal{fg}} = \bigcap_{I \subseteq R} (I^* : I)$ and the notion of weak $F$-regularity as usual.  In the first half of this section, we work in this equal characteristic setting when dealing with finitistic test ideals.  However, if the reader is unfamiliar with this generality, we invite him or her to restrict to the case where $R$ is of characteristic $p > 0$.
\end{remark}

\begin{prop}
\label{prop.SumOfAiInsideTauIfFRegular}
Assume $R$ is of equal characteristic and that each $R/\p_i$ is weakly $F$-regular (\resp assume $R$ is of characteristic $p > 0$ and each $R/\bp_i$ is strongly $F$-regular).  Then $\sum_{i=1}^n \ia_i \subseteq \taufg(R)$ (\resp $\subseteq \taub(R)$).
\end{prop}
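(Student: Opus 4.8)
The plan is to establish, for each $i$ separately, the stronger containment $\ia_i \subseteq \taufg(R)$ (\resp $\ia_i \subseteq \taub(R)$); since $\taufg(R)$ and $\taub(R)$ are ideals, summing over $i$ then yields the proposition. In the characteristic $p$, strongly $F$-regular case there is essentially nothing to prove: the corollary immediately following Proposition~\ref{pr:minprimes} already gives, under exactly this hypothesis, the \emph{equality} $\taub(R) = \sum_{i=1}^n (0 :_R \p_i) = \sum_{i=1}^n \ia_i$. So the content lies in the equal characteristic, weakly $F$-regular case, which I would prove directly as follows.

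Fix $i$ and an element $z \in \ia_i = \ann_R \p_i$, so that $z \p_i = 0$. Recall that $z$ lies in $\taufg(R)$ precisely when $z \cdot I^{*} \subseteq I$ for every ideal $I$ of $R$, so fix an ideal $I$ and an element $u \in I^{*}$; it is enough to show $zu \in I$. The key step is to pass to the quotient domain $R/\p_i$. Choose $c \in R^\circ$ with $c u^{q} \in I^{[q]}$ for all $q \gg 0$. Since $\p_i$ is a \emph{minimal} prime, $c \notin \p_i$, so the image $\bar c$ of $c$ is a nonzero element of the domain $R/\p_i$, i.e.\ $\bar c \in (R/\p_i)^{\circ}$; reducing the containments $c u^{q} \in I^{[q]}$ modulo $\p_i$ then shows that $\bar u$ lies in the tight closure of $I(R/\p_i)$ computed over $R/\p_i$. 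But $R/\p_i$ is weakly $F$-regular, so the ideal $I(R/\p_i)$ is tightly closed, whence $\bar u \in I(R/\p_i)$, i.e.\ $u \in I + \p_i$. Writing $u = w + t$ with $w \in I$ and $t \in \p_i$, we obtain $zu = zw + zt = zw \in I$, because $zt \in z\p_i = 0$. Thus $z \in \taufg(R)$, and summing over $i$ completes the proof.

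I expect the only genuinely delicate point to be the assertion that tight closure is compatible with the surjection $R \onto R/\p_i$ — that is, $u \in I^{*}$ forces $\bar u$ into the tight closure of $I(R/\p_i)$. In characteristic $p$ this is immediate from the definition of tight closure, and in equal characteristic zero it is one of the standard compatibilities of equal characteristic tight closure \cite{HochsterHunekeTightClosureInEqualCharactersticZero}. Everything else in the argument is the elementary observation that an element of $\ann_R \p_i$ kills the ``$\p_i$-part'' that is produced by working modulo $\p_i$, so the proof is really the tight-closure shadow of the minimal-primes description in Proposition~\ref{pr:minprimes}. (If one prefers not to invoke the earlier corollary, the same argument, with ``every submodule of every module is tightly closed'' in place of ``every ideal is tightly closed'', reproves the strongly $F$-regular, $\taub$ case directly.)
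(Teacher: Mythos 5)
Your argument is correct and is essentially the paper's own proof: both reduce to showing $I^* \subseteq I + \p_i$ (resp.\ $0^*_M \subseteq \p_i M$) via weak (resp.\ strong) $F$-regularity of $R/\p_i$, and then kill the $\p_i$-part by multiplying by $z \in \ann_R \p_i$. The only cosmetic differences are that you unpack the ``tight closure commutes with passing to $R/\p_i$'' step from the definition where the paper simply quotes the equality $I^* = \bigcap_i (I+\p_i)$, and that you optionally route the $\taub$ case through the corollary to Proposition~\ref{pr:minprimes} before noting the direct module-level argument the paper actually uses.
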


\begin{proof}
We cover the weakly $F$-regular case first: By symmetry, it is enough to show that $\ia_1 \subseteq \taufg(R)$.  So let $c\in \ia_1$, let $I$ be an ideal of $R$, and $x\in I^* = \bigcap_{i=1}^n (I + \p_i)$.  Then $x\in I + \p_1$, so $cx \in I + \ia_1 \p_1 =I$.  Thus, $c\in \bigcap_I (I : I^*) = \taufg(R)$.

The strongly $F$-regular case is similar: Let $c\in \ia_1$,  let $M$ be an arbitrary $R$-module, and let $x\in 0^*_M$.  Since $0^*_M = \bigcap_{i=1}^n \p_i M$, we have $x \in \p_1 M$, so that $cx \in \ia_1 \p_1 M = 0$.  Thus, $c \in \bigcap_M \ann 0^*_M = \taub(R)$.
\end{proof}

\begin{prop}\label{pr:tfgcond}
If $R$ is of equal characteristic, then  we have $\taufg(R) \subseteq \bc$.
\end{prop}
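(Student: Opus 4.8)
The plan is to show directly that every element of $\taufg(R)$ lies in the conductor, handling one element of the normalization at a time; no reduction to the domain case is needed (though one via the minimal primes $\p_1,\dots,\p_n$ is available). Since $R$ is reduced, $R^{\textnormal{N}}$ is a subring of the total quotient ring $\mathrm{Frac}(R)=W^{-1}R$, where $W$ is the set of nonzerodivisors, and $\bc=\{c\in R:\,cR^{\textnormal{N}}\subseteq R\}$; also $W=R^{\circ}$ because $R$ is reduced and Noetherian. So I would fix $c\in\taufg(R)$ and an arbitrary $x\in R^{\textnormal{N}}$, write $x=a/b$ with $a\in R$ and $b\in W$, and aim to prove $cx\in R$; since $b$ is a nonzerodivisor this is equivalent to $ca\in(b)$.

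The crux is that $a\in(b)^{*}$. Multiplying an equation of integral dependence $x^{n}+r_{1}x^{n-1}+\dots+r_{n}=0$ of $x$ over $R$ through by $b^{n}$ exhibits $a$ as satisfying an equation of integral dependence on the ideal $(b)$, so $a\in\overline{(b)}$; and $\overline{(b)}\subseteq(b)^{*}$ by the Brian\c{c}on--Skoda theorem for a principal ideal, which in equal characteristic zero is obtained by reduction to positive characteristic. In positive characteristic this is elementary and needs no appeal to Brian\c{c}on--Skoda: the extension $S:=R[x]$ is module-finite over $R$ and lies inside $R[1/b]$, so $b^{N}S\subseteq R$ for some $N$ (one may take $N=n-1$); then for every $q=p^{e}$ one has $a^{q}=b^{q}x^{q}$ with $b^{N}x^{q}\in b^{N}S\subseteq R$, whence $b^{N}a^{q}\in(b^{q})$, and since $b^{N}\in R^{\circ}$ this says precisely that $a\in(b)^{*}$. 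Finally, because a finitistic test element $c$ satisfies $cI^{*}\subseteq I$ for every ideal $I$ (apply this with $I=(b)$), we get $ca\in(b)$, hence $cx\in R$; as $x$ was arbitrary, $c\in\bc$.

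I expect the only delicate point to be the containment $a\in(b)^{*}$ in the equal characteristic zero case, where one must pass to positive characteristic reductions and check that the chosen denominator $b$ and the multiplier $b^{N}$ remain nonzerodivisors in almost all of them; in positive characteristic everything above is completely elementary. The rest — the description of $\bc$ inside $\mathrm{Frac}(R)$, the equivalence $cx\in R\iff ca\in(b)$, and module-finiteness of $R[x]$ — is routine bookkeeping.
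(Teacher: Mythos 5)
Your proposal is correct and follows essentially the same route as the paper: fix $c\in\taufg(R)$, write an arbitrary $x\in R^{\textnormal{N}}$ as $a/b$ with $b$ a nonzerodivisor, observe $a\in\overline{(b)}\subseteq(b)^{*}$, and conclude $ca\in(b)$, hence $cx\in R$. The only difference is that the paper simply cites the identity $(b)^{-}=(b)^{*}$ for principal ideals, whereas you supply the elementary positive-characteristic verification (via $b^{N}R[x]\subseteq R$) and flag the reduction-mod-$p$ issue in equal characteristic zero; both are fine.
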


\begin{proof}
Let $c \in \taufg(R)$.  Take any $x\in R^{\textnormal{N}}$ (where $R^{\textnormal{N}}$ is the integral closure of $R$ in its total ring of fractions).  Then $x = f/g$ for some $f, g \in R$ such that $g$ is a non-zerodivisor, and $f \in (g)^- = (g)^*$ (here $(g)^-$ denotes the integral closure of $(g)$).  But $c \cdot (g)^* \subseteq (g)$, whence $cf = gh$ for some $h \in R$, so that $cx = h \in R$.  Since $x$ was arbitrary, $c \in (R :_R R^{\textnormal{N}}) = \bc$.
\end{proof}

\begin{prop}\label{pr:condinann}
For any reduced Noetherian ring (of any characteristic), $\bc \subseteq \sum_{i=1}^n \ia_i$.
\end{prop}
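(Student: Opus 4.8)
The plan is to use the explicit description of the normalization of a reduced Noetherian ring as a finite product indexed by the minimal primes. Since $R$ is reduced and Noetherian, its associated primes are exactly the minimal primes $\p_1,\dots,\p_n$, so the set of non-zerodivisors is $R \setminus \bigcup_i \p_i$, and inverting it yields the total quotient ring $K = \prod_{i=1}^n K_i$ with $K_i := \mathrm{Frac}(R/\p_i)$. Let $e_1,\dots,e_n \in K$ be the idempotents of this product decomposition, so $e_i$ is the identity of $K_i$, the $e_i$ are pairwise orthogonal, and $\sum_{i=1}^n e_i = 1$. Each $e_i$ satisfies $e_i^2 - e_i = 0$ and is therefore integral over $R$, so $e_i \in R^{\textnormal{N}}$; consequently $R^{\textnormal{N}} = \prod_i (e_i R^{\textnormal{N}}) = \prod_i \overline{R/\p_i}$, where $\overline{R/\p_i}$ denotes the integral closure of $R/\p_i$ in $K_i$.

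Next I would take $c \in \bc = (R :_R R^{\textnormal{N}})$ and, for each $i$, consider the element $c e_i$. Since $e_i \in R^{\textnormal{N}}$, we have $c e_i \in R$. Regarding $R$ as a subring of $K$, the element $c e_i$ is supported only in the $i$-th factor $K_i$, so its image in $K_j$ is zero for every $j \neq i$. Because the composite $R \onto R/\p_j \hookrightarrow K_j$ has kernel precisely $\p_j$, this shows $c e_i \in \p_j$ for all $j \neq i$, i.e. $c e_i \in \bigcap_{j \neq i} \p_j = \ia_i$.

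Summing over $i$ and using $\sum_i e_i = 1$ then gives $c = \sum_{i=1}^n c e_i \in \sum_{i=1}^n \ia_i$, which is the asserted containment; note that no hypothesis on the characteristic is used. The only slightly delicate point is the structural input that the component idempotents of the total quotient ring $K$ already belong to $R^{\textnormal{N}}$ (equivalently, that $R^{\textnormal{N}}$ splits as $\prod_i \overline{R/\p_i}$); this is standard for reduced Noetherian rings, since $K$ is then a finite product of fields, and once it is granted the remainder is a one-line verification.
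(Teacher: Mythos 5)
Your proof is correct and is essentially the paper's argument: both rest on the product decomposition $R^{\textnormal{N}} \cong \prod_i (R/\p_i)^{\textnormal{N}}$ and the observation that $c$ times an idempotent of this product lies in $R$ and in every minimal prime but one. The paper works with the partial sums $u_i = e_1 + \cdots + e_i$ and telescopes, whereas you multiply directly by the individual idempotents $e_i$ — a cosmetic simplification of the same idea.
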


\begin{proof}
Fix $c \in \bc$.
We have $R^{\textnormal{N}} \cong \prod_{i=1}^n (R/\p_i)^{\textnormal{N}}$, under which the natural inclusion $j: R \hookrightarrow R^{\textnormal{N}}$ sends $r \mapsto (\ov{r}, \ov{r}, \dotsc, \ov{r})$, where \raisebox{5pt}{$\ov{\;\,}$} denotes the image of the element in each normalized residue class ring.

Now, for each $1\leq i \leq n-1$, let $u_i := (\ov{1}, \ov{1}, \dotsc, \ov{1}, 0, \dotsc, 0) \in R^{\textnormal{N}}$, with $i$ entries $1$s and $(n-i)$ entries $0$s.  For each $i$, since $c u_i \in R$, there exists an element $b_i \in R$ such that \[
(\ov{c}, \ov{c}, \dotsc, \ov{c}, 0, \dotsc, 0) = c u_i = j(b_i) = (\ov{b_i}, \dotsc, \ov{b_i}).
\]
Let $a_i := c-b_i$ (that is, $c = a_i + b_i$).  The above equation means precisely that $a_i \in \bigcap_{j=1}^i \p_j$ and $b_i \in \bigcap_{j=i+1}^n \p_j$.

For each $2 \leq i \leq n-1$, observe that \[
a_{i-1} - a_i \in \ia_i.
\]
To see this, note that $c = a_{i-1} + b_{i-1} = a_i + b_i$  implies that $a_{i-1} - a_i = b_i - b_{i-1}$.  But $a_{i-1} - a_i \in \bigcap_{j=1}^{i-1} \p_j$ and $b_i - b_{i-1} \in \bigcap_{j=i+1}^n \p_j$, so since these are the same element, we have $a_{i-1} - a_i \in (\bigcap_{j=1}^{i-1} \p_j) \cap (\bigcap_{j=i+1}^n \p_j) = \ia_i$, as required.

Set $a_0 := c$ and $a_n :=  0$.  Then we also have $a_0-a_1 = b_1 \in \ia_1$ and $a_{n-1} - a_n = a_{n-1} \in \ia_n$, so that the latest displayed equation holds for $i=1, \dotsc, n$.  Altogether then, we have \[
c = \sum_{i=1}^n (a_{i-1} - a_i) \in \sum_{i=1}^n \ia_i,
\]
as was to be shown.
\end{proof}

Taken together, we draw the following conclusion which in characteristic $p > 0$ can also be viewed as a special case of \cite[Proposition 4.4]{SmithMultiplierTestIdeals}:
\begin{thm} \label{thm.ModMinimalPrimesIsFRegularThen}
Let $R$ be a reduced equicharacteristic Noetherian ring, with minimal primes $\p_1, \dotsc, \p_n$, and suppose each $R/\p_i$ is weakly $F$-regular.  Then \[
\taufg(R) = \bc = \sum_{i=1}^n \ann \p_i.
\]
If moreover, $R$ is of characteristic $p > 0$ and each $R/\p_i$ is \emph{strongly} $F$-regular, then $\taub(R)$ also coincides.
\end{thm}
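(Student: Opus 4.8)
The plan is simply to assemble the three preceding propositions into a chain of containments that closes up on itself. Under the hypothesis that each $R/\p_i$ is weakly $F$-regular, Proposition \ref{prop.SumOfAiInsideTauIfFRegular} gives $\sum_{i=1}^n \ia_i \subseteq \taufg(R)$; Proposition \ref{pr:tfgcond} gives $\taufg(R) \subseteq \bc$ (using that $R$ is equicharacteristic, so that finitistic tight closure is available); and Proposition \ref{pr:condinann} gives $\bc \subseteq \sum_{i=1}^n \ia_i$ with no extra hypotheses. Stringing these together yields
\[
\sum_{i=1}^n \ia_i \subseteq \taufg(R) \subseteq \bc \subseteq \sum_{i=1}^n \ia_i,
\]
which forces all three modules to coincide. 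This is the first assertion.

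For the ``moreover'' clause I would run the same chain but thread $\taub(R)$ through the bottom. Assuming now that $R$ has characteristic $p>0$ and each $R/\p_i$ is strongly $F$-regular, the parenthetical version of Proposition \ref{prop.SumOfAiInsideTauIfFRegular} gives $\sum_{i=1}^n \ia_i \subseteq \taub(R)$, while $\taub(R) \subseteq \taufg(R)$ holds by definition (as recalled at the start of this section). Combining with $\taufg(R) \subseteq \bc \subseteq \sum_{i=1}^n \ia_i$ as above gives
\[
\sum_{i=1}^n \ia_i \subseteq \taub(R) \subseteq \taufg(R) \subseteq \bc \subseteq \sum_{i=1}^n \ia_i,
\]
so $\taub(R)$ joins the common value. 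Note this simultaneously re-establishes $\taufg(R)=\bc=\sum_i\ia_i$ in this case; alternatively one could invoke that strongly $F$-regular quotients are weakly $F$-regular and cite the first part directly, but sandwiching $\taub$ in the display avoids needing that implication.

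There is no genuine obstacle here: all of the substantive work --- the colon-ideal manipulation behind $\sum_i \ia_i \subseteq \taufg(R)$, the integral-closure argument behind $\taufg(R) \subseteq \bc$, and the idempotent/normalization bookkeeping behind $\bc \subseteq \sum_i \ia_i$ --- is already carried out in Propositions \ref{prop.SumOfAiInsideTauIfFRegular}, \ref{pr:tfgcond}, and \ref{pr:condinann}. The only point requiring a moment's care is lining up the hypotheses correctly: Proposition \ref{pr:tfgcond} needs the equicharacteristic assumption, Proposition \ref{pr:condinann} needs nothing beyond reduced and Noetherian, and the containment $\sum_i \ia_i \subseteq \taufg(R)$ (\resp $\subseteq \taub(R)$) is precisely where the weak (\resp strong) $F$-regularity of the $R/\p_i$ is used.
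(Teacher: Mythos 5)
Your proof is correct and is precisely the paper's argument: the theorem is stated there as the immediate consequence, "taken together," of Propositions \ref{prop.SumOfAiInsideTauIfFRegular}, \ref{pr:tfgcond}, and \ref{pr:condinann}, chained into exactly the cycle of containments you display, with the strongly $F$-regular case handled by inserting $\taub(R)\subseteq\taufg(R)$ into the same cycle.
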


\begin{remark}\label{rmk:modminprimes}
When $R$ is a Stanley-Reisner ring of positive characteristic and $\p_1, \dotsc, \p_n$ are its minimal primes, J. Vassilev \cite[Theorem 3.7]{Va-testquot} showed that $\taufg(R) = \sum_{i=1}^n \ann \p_i$, after which W. Traves gave a $D$-module proof of the same result in \cite[Theorem 5.8]{Tr-diffmon}.

Our theorem generalizes Vassilev's result, since for such a ring $R$, each $R/\p_i$ is a polynomial ring over a field, hence strongly $F$-regular.
\end{remark}

We note also the following, which may be already known to experts, but should be of independent interest:

\begin{prop}
Assume that each $R/\p_i$ is normal.  Then $\sum_{i=1}^n \ia_i = \bc$.
\end{prop}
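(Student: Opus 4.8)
The plan is to pair the inclusion $\bc \subseteq \sum_{i=1}^n \ia_i$ already proved in Proposition~\ref{pr:condinann} (which holds for any reduced Noetherian ring) with the reverse inclusion, and it is precisely in the reverse inclusion that the hypothesis that each $R/\p_i$ is normal gets used. So the only thing to establish is that $\sum_{i=1}^n \ia_i \subseteq \bc = (R :_R R^{\textnormal{N}})$.

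First I would record the shape of the normalization. As in Proposition~\ref{pr:condinann}, since the total ring of fractions of $R$ is $\prod_{i=1}^n Q(R/\p_i)$, we have $R^{\textnormal{N}} \cong \prod_{i=1}^n (R/\p_i)^{\textnormal{N}}$, with the canonical inclusion $j : R \into R^{\textnormal{N}}$ being the diagonal map $r \mapsto (\ov r, \dotsc, \ov r)$, where $\ov{\;}$ denotes the image in each residue class ring. Under the standing hypothesis $(R/\p_i)^{\textnormal{N}} = R/\p_i$, so $R^{\textnormal{N}} \cong \prod_{i=1}^n R/\p_i$; in particular it carries the orthogonal idempotents $e_i$ with $e_i R^{\textnormal{N}} = R/\p_i$.

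Next I would show $\ia_i\, R^{\textnormal{N}} \subseteq j(R)$ for each $i$; summing over $i$ gives $\sum_i \ia_i \subseteq \bc$ and hence, with Proposition~\ref{pr:condinann}, equality. Fix $c \in \ia_i = \bigcap_{j \neq i} \p_j$. Since $c \in \p_j$ for every $j \neq i$, its image in $R/\p_j$ vanishes, so $j(c)$ is supported in the $i$-th coordinate, i.e. $j(c) = j(c) e_i \in e_i R^{\textnormal{N}} = R/\p_i$. Therefore, for any $x = (\ov{x_1}, \dotsc, \ov{x_n}) \in R^{\textnormal{N}}$ the product $j(c)\, x$ has all coordinates zero except possibly the $i$-th, which equals $\ov c\,\ov{x_i} \in R/\p_i$. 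Now lift $\ov{x_i}$ to an element $y \in R$; then $cy \in R$ lies in $\p_j$ for all $j \neq i$, and its image in $R/\p_i$ is $\ov c\,\ov{x_i}$, so $j(cy) = j(c)\, x$. Thus $j(c)\, x \in j(R)$, which is exactly the statement $c \in \bc$.

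The argument is essentially bookkeeping, and the one step to state carefully is the observation that the diagonal embedding $j$ sends $\ia_i$ into the single factor $R/\p_i$ of $R^{\textnormal{N}}$ — this is where normality of the $R/\p_i$ is used, since it makes $R^{\textnormal{N}}$ literally the product $\prod_i R/\p_i$ rather than the product of the larger rings $(R/\p_i)^{\textnormal{N}}$. Beyond that, no real obstacle remains.
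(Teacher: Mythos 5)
Your proof is correct, but it takes a genuinely different route from the paper's for the key inclusion $\sum_{i=1}^n \ia_i \subseteq \bc$ (the reverse inclusion is indeed exactly Proposition~\ref{pr:condinann}, as you say). You exploit the identification $R^{\textnormal{N}} \cong \prod_{i=1}^n (R/\p_i)^{\textnormal{N}} = \prod_{i=1}^n R/\p_i$ --- normality entering precisely at the second equality --- and then verify by coordinate bookkeeping that for $c \in \ia_i$ the element $j(c)x$ is concentrated in the $i$-th factor and equals $j(cy)$ for any lift $y$ of the $i$-th coordinate of $x$, giving $cR^{\textnormal{N}} \subseteq R$ directly. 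The paper instead writes an arbitrary $x \in R^{\textnormal{N}}$ as $f/g$ with $g$ a non-zerodivisor, notes that $f \in (g)^- = \bigcap_i \bigl((g)+\p_i\bigr)$ --- this is where normality enters for them, via the fact that principal ideals in normal domains are integrally closed --- and concludes $cf \in gR + c\p_1 = gR$ for $c \in \ia_1$, whence $cx \in R$. Your argument is arguably the more elementary of the two: it relies only on the product decomposition of the normalization, which the paper already invoked in proving Proposition~\ref{pr:condinann}, whereas the paper's proof trades that for a standard but less trivial input about integral closures of principal ideals modulo minimal primes. Both proofs are complete; yours has no gap.
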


\begin{proof}
By Proposition~\ref{pr:condinann}, we need only show that each of the $\ia_i$ is contained in $\bc$, and by symmetry we need only show it for $\ia_1$.  So let $c\in \ia_1$ and let $x\in R^{\textnormal{N}}$.  Then $x=f/g$ for some non-zerodivisor $g$ such that $f \in (g)^- = \bigcap_{i=1}^n \big( (g) + \p_i \big)$ by the assumption on the $R/\p_i$.  In particular, $f \in gR+\p_1$, so that $cf \in gR + c\p_1 = gR$, whence $cx \in R$.  Thus $c\in (R:_R R^{\textnormal{N}}) = \bc$.
\end{proof}

We will need the following characterization of the conductor ideal given by (iii) below.

\begin{prop}\cite{HunekeSwansonIntegralClosure}
\label{prop.CharsOfConductor}
Suppose that $R$ is a reduced excellent ring with normalization $R^{\textnormal{N}}$ in its total ring of fractions then the \emph{conductor of $R$ (in $R^{\textnormal{N}}$)}, denoted by $\bc$, is defined in any of the following equivalent ways:
\begin{itemize}
\item[(i)]  $\bc =   \Ann_R(R^{\textnormal{N}}/R)$.
\item[(ii)]  $\bc$ is the largest ideal of $R$ that is simultaneously an ideal of $R^{\textnormal{N}}$.
\item[(iii)]  $\bc = \sum_{\phi} \phi(R^{\textnormal{N}})$ where $\phi$ ranges over $\Hom_R(R^{\textnormal{N}}, R)$.
\end{itemize}
\end{prop}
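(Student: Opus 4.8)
The plan is to take (i) as the working definition, $\bc := \Ann_R(R^{\textnormal{N}}/R) = \{\, r \in R : rR^{\textnormal{N}} \subseteq R \,\}$, and to deduce (ii) and (iii) from it. Throughout I would write $K$ for the total quotient ring of $R$, so that $R \subseteq R^{\textnormal{N}} \subseteq K$ and, since $R$ is reduced and Noetherian, $K$ is a finite product of fields. The structural fact underlying everything is that $K$ is already obtained from $R^{\textnormal{N}}$ by inverting the set $W$ of non-zerodivisors of $R$: the elements of $W$ are units of $K$, hence non-zerodivisors on $R^{\textnormal{N}} \subseteq K$, and then $K = W^{-1}R \subseteq W^{-1}R^{\textnormal{N}} \subseteq K$ forces $W^{-1}R^{\textnormal{N}} = K$. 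Excellence of $R$ enters only to guarantee that $R^{\textnormal{N}}$ is module-finite over $R$ (so in particular Noetherian, and $\bc \neq 0$ whenever $R \subsetneq R^{\textnormal{N}}$).

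For the equivalence of (i) and (ii) I would first check that $\bc$ is itself an ideal of $R^{\textnormal{N}}$ that happens to lie in $R$: it is obviously an ideal of $R$ contained in $R$, and if $r \in \bc$ and $s \in R^{\textnormal{N}}$ then $rs \in R$ (apply the defining condition to $s$) while $(rs)R^{\textnormal{N}} = r(sR^{\textnormal{N}}) \subseteq rR^{\textnormal{N}} \subseteq R$, so $rs \in \bc$. Conversely, any ideal $I$ of $R^{\textnormal{N}}$ with $I \subseteq R$ satisfies $aR^{\textnormal{N}} \subseteq I \subseteq R$ for each $a \in I$, hence $I \subseteq \bc$; so $\bc$ is the largest ideal of $R$ that is simultaneously an ideal of $R^{\textnormal{N}}$.

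For the equivalence of (i) and (iii) the key step is to identify $\Hom_R(R^{\textnormal{N}}, R)$ with the multiplication maps $\mu_a$, $a \in \bc$. Given $\phi \in \Hom_R(R^{\textnormal{N}}, R)$, I would localize at $W$: because $W$ acts by non-zerodivisors on both $R$ and $R^{\textnormal{N}}$, $\phi$ extends to a $K$-linear map $W^{-1}\phi : W^{-1}R^{\textnormal{N}} = K \to W^{-1}R = K$, which must be multiplication by $\phi(1) \in R$. Restricting back gives $\phi(x) = \phi(1)x$ for all $x \in R^{\textnormal{N}}$; in particular $\phi(1)R^{\textnormal{N}} = \phi(R^{\textnormal{N}}) \subseteq R$, so $\phi(1) \in \bc$, and conversely every $a \in \bc$ yields $\mu_a \in \Hom_R(R^{\textnormal{N}}, R)$. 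Consequently $\sum_\phi \phi(R^{\textnormal{N}}) = \sum_{a \in \bc} aR^{\textnormal{N}} = \bc R^{\textnormal{N}}$, and this equals $\bc$ because $\bc$ is an ideal of $R^{\textnormal{N}}$ by the part already proved. (Alternatively, and more directly: $\bc \subseteq \sum_\phi \phi(R^{\textnormal{N}})$ since $a = \mu_a(1) \in \mu_a(R^{\textnormal{N}})$, while each $\phi(R^{\textnormal{N}}) = \phi(1)R^{\textnormal{N}}$ is an ideal of $R^{\textnormal{N}}$ contained in $R$, hence inside $\bc$ by (ii).)

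I do not expect a serious obstacle; the one point demanding care is the localization step, namely verifying that the non-zerodivisors of $R$ really do act invertibly on $R^{\textnormal{N}}$ and that $W^{-1}R^{\textnormal{N}} = K$. This is precisely what turns an a priori merely $R$-linear homomorphism $R^{\textnormal{N}} \to R$ into honest multiplication by an element of $R$, and with it in hand the rest is bookkeeping with the definition of $\bc$.
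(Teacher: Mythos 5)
Your proposal is correct and follows essentially the same route as the paper: the paper also treats the equivalence of (i) and (ii) as standard and proves (iii) by showing every $R$-linear map $g : R^{\textnormal{N}} \to R$ is multiplication by $g(1) \in \bc$. The only cosmetic difference is that the paper clears denominators directly (writing $y = a/s$ and computing $sg(y) = ag(1)$) rather than passing through the localization $W^{-1}R^{\textnormal{N}} = K$, but this is the same computation.
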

\begin{proof}
We only describe the equivalence of (i) = (ii) with (iii) as the other equivalence is well known.  First suppose $x\in \bc$.  Then there is an $R$-linear map $m: R^{\textnormal{N}} \to R$ given by multiplication by $x$, so $m(1)=x$.

Conversely, let $g: R^{\textnormal{N}} \to R$ be an $R$-linear map, and let $x=g(1)$.  Fix $y\in R^{\textnormal{N}}$ and note that $y=a/s$ for some $a,s\in R$ with $s$ a non-zerodivisor.  We have $sg(y) = g(sy) = g(a) = ag(1) = ax$.  Thus, $xy = ax/s = g(y) \in R$.  Thus $x\in \bc$.
\end{proof}

We now transition to the characteristic $p > 0$ setting.
Consider the following lemma which is inspired by Proposition \ref{prop.CharsOfConductor}(iii).

\begin{lemma}
\label{lem.ImageFromNormalizationIsInConductor}
Suppose that $R$ is a $F$-finite reduced ring of characteristic $p > 0$ and $R \subseteq R^{\textnormal{N}}$ is its normalization with conductor ideal $\bc = \Ann_R(R^{\textnormal{N}}/R)$.  Fix some $e > 0$ and an $R$-linear map $\phi : {}^e(R^{\textnormal{N}}) \to R$.  Then $\Image(\phi) \subseteq \bc$.
\end{lemma}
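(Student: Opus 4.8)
The statement says that any $R$-linear map $\phi : {}^e(R^{\textnormal{N}}) \to R$ has image inside the conductor $\bc$. The plan is to factor such a $\phi$ through $R^{\textnormal{N}}$ and then invoke the characterization $\bc = \sum_\psi \psi(R^{\textnormal{N}})$ over $\psi \in \Hom_R(R^{\textnormal{N}}, R)$ from Proposition \ref{prop.CharsOfConductor}(iii). More precisely, since $R^{\textnormal{N}}$ is normal (hence integrally closed) and Frobenius does not leave the integral closure, ${}^e(R^{\textnormal{N}})$ is a module over the ring ${}^e R^{\textnormal{N}}$, which itself is an $R^{\textnormal{N}}$-algebra via the inclusion $R^{\textnormal{N}} \hookrightarrow {}^e R^{\textnormal{N}}$ (i.e., $R^{\textnormal{N}}$ sits inside ${}^e R^{\textnormal{N}}$ as $e$-th powers). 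In fact the key point is that ${}^e(R^{\textnormal{N}})$, viewed as an $R$-module, is a finitely generated torsion-free $R$-module which becomes $R^{\textnormal{N}}$-isomorphic to a direct sum of fractional ideals of $R^{\textnormal{N}}$ after localizing at the generic points; so any $R$-linear map from it to $R$ is built, locally and then globally, out of maps $R^{\textnormal{N}} \to R$.

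The cleanest route I would actually carry out: note that ${}^e(R^{\textnormal{N}})$ is an $R^{\textnormal{N}}$-module (via the ring map $R^{\textnormal{N}} \to {}^e R^{\textnormal{N}}$ sending $r \mapsto {}^e(r^q)$ — wait, more carefully, ${}^e(R^{\textnormal{N}})$ is naturally a module over ${}^e R^{\textnormal{N}}$, and restriction of scalars along $R^{\textnormal{N}} \hookrightarrow {}^e R^{\textnormal{N}}$ makes it an $R^{\textnormal{N}}$-module; this $R^{\textnormal{N}}$-module structure restricts along $R \hookrightarrow R^{\textnormal{N}}$ to the given $R$-module structure). Now $\phi$ is only assumed $R$-linear, not $R^{\textnormal{N}}$-linear, so one cannot directly say $\phi$ is an $R^{\textnormal{N}}$-module map. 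Instead, I would argue as follows. For each $x \in {}^e(R^{\textnormal{N}})$ and each $y \in R^{\textnormal{N}}$, I want to show $y \cdot \phi(x) \in R$; combined with $\phi(x) \in R$ this gives $\phi(x) \in \bc$ by definition of the conductor (Proposition \ref{prop.CharsOfConductor}(i)). To see $y\phi(x) \in R$: write $y = a/s$ with $a, s \in R$, $s$ a non-zerodivisor (possible since $R^{\textnormal{N}}$ lies in the total ring of fractions of $R$). Then $s \cdot (y\cdot x) = a \cdot x$ in ${}^e(R^{\textnormal{N}})$ — here I am using that the $R^{\textnormal{N}}$-action on ${}^e(R^{\textnormal{N}})$ extends the $R$-action and that $s, a$ act through $R$. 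Applying the $R$-linear map $\phi$ and using $R$-linearity in the scalars $s, a \in R$: $s\,\phi(y\cdot x) = \phi(s\cdot(y\cdot x)) = \phi(a\cdot x) = a\,\phi(x)$. Since $\phi(y\cdot x) \in R$ and $s$ is a non-zerodivisor, this forces $a\phi(x) \in sR$, i.e. $y\phi(x) = a\phi(x)/s \in R$. As $y \in R^{\textnormal{N}}$ was arbitrary, $R^{\textnormal{N}} \cdot \phi(x) \subseteq R$, hence $\phi(x) \in \bc$.

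The main subtlety — and the step I would double-check carefully — is the claim that $s\cdot(y\cdot x) = a\cdot x$ inside ${}^e(R^{\textnormal{N}})$, i.e. that the $R^{\textnormal{N}}$-module structure on ${}^e(R^{\textnormal{N}})$ used to even write "$y \cdot x$" is the right one and is compatible with the $R$-structure along $R \hookrightarrow R^{\textnormal{N}}$. The resolution is just to fix which $R^{\textnormal{N}}$-structure we mean: ${}^e(R^{\textnormal{N}})$ carries the left ${}^e R^{\textnormal{N}}$-module structure $r\cdot {}^e x = {}^e(r^q x)$, but what we want here is simpler — the right $R^{\textnormal{N}}$-structure, or equivalently we regard ${}^e(R^{\textnormal{N}})$ as $R^{\textnormal{N}}$ itself with a twisted $R$-action, so "$y\cdot x$" just means ordinary multiplication in $R^{\textnormal{N}}$, and then the $R$-action we are using is the restriction of this along the Frobenius $R \hookrightarrow {}^e R$... . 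To keep things clean I would instead phrase $\phi$ as an element of $\Hom_R({}^e(R^{\textnormal{N}}), R) = \Hom_{{}^eR}({}^e(R^{\textnormal{N}}), {}^e(\text{something}))$ is not needed; the argument above only uses that $R$ acts on ${}^e(R^{\textnormal{N}})$ through an action that, after inverting non-zerodivisors of $R$, agrees with multiplication in the total quotient ring — which is automatic since everything is torsion-free and generically étale/diagonal. I expect writing this compatibility down precisely (perhaps by first localizing at $R^\circ$ to reduce to a product of fields, where the claim is transparent, and then clearing denominators) to be the only real work; the conductor computation itself is immediate from Proposition \ref{prop.CharsOfConductor}.
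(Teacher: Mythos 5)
Your argument is correct and is essentially the paper's proof in elementwise form: both come down to the observation that after inverting non-zerodivisors of $R$ the $R$-linear map $\phi$ becomes $R^{\textnormal{N}}$-linear for the Frobenius-twisted structure, so its image is an ideal of $R$ stable under $R^{\textnormal{N}}$, hence contained in $\bc$ (the paper uses the ``largest common ideal'' characterization, you use $\bc=\Ann_R(R^{\textnormal{N}}/R)$). To settle the subtlety you flag: the action you need is the one from your first paragraph, namely $y\cdot{}^{e}z={}^{e}(y^{p^{e}}z)$ obtained by restriction of scalars along the Frobenius embedding $R^{\textnormal{N}}\hookrightarrow{}^{e}(R^{\textnormal{N}})$, \emph{not} ordinary multiplication ${}^{e}(yz)$; with the latter the step $\phi(s\cdot(y\cdot x))=s\,\phi(y\cdot x)$ fails because $\phi$ is linear only for the twisted $R$-action, whereas with the former one has $s\cdot(y\cdot x)={}^{e}\bigl((sy)^{p^{e}}z\bigr)=a\cdot x$ and your computation goes through verbatim.
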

\begin{proof}
It is sufficient to show that $\Image(\phi)$ is an $R^{\textnormal{N}}$-ideal as well since $\bc$ is the largest such ideal, so choose $x \in R^{\textnormal{N}}$, we need to show that $x \Image(\phi) \subseteq \Image(\phi)$.  Notice that by tensoring over $R$ with $K$, the total ring of fractions of $R$, we have the following commutative diagram:
\[
\xymatrix{
{}^e(R^{\textnormal{N}}) \ar@{^{(}->}[d] \ar[r]^{\phi} & R \ar@{^{(}->}[d] \\
{}^eK \ar[r]_{\overline{\phi}} & K
}
\]
where $\overline{\phi} : {}^eK \cong {}^e(R^{\textnormal{N}}) \tensor_{R} K \to K$ is induced by $\phi$.  Note also that $\overline{\phi}$ is $R^{\textnormal{N}}$-linear.  Now,
\[
x \phi\left( {}^e(R^{\textnormal{N}}) \right) = x \overline{\phi}\left( {}^e(R^{\textnormal{N}}) \right) = \overline{\phi}\left( x {}^e(R^{\textnormal{N}}) \right) \subseteq \overline{\phi}\left( {}^e(R^{\textnormal{N}}) \right) = \Image(\phi)
\]
which completes the proof.  Note that this also implies that any map $\phi : {}^e(R^{\textnormal{N}}) \to R \subseteq R^{\textnormal{N}}$ is also $R^{\textnormal{N}}$-linear.
\end{proof}

Now we apply our transformation rule for test ideals, Corollary \ref{cor.TauTransformsUnderFiniteExtensions}, to the inclusion $R \subseteq R^{\textnormal N}$.  This slightly generalizes \cite[Proposition 4.4]{SmithMultiplierTestIdeals} to the case of big test ideals and strongly $F$-regular rings.

\begin{thm}
\label{thm.TauForNormalizationFRegular}
Suppose that $R$ is an $F$-finite reduced ring of characteristic $p > 0$ and that $R \subseteq R^{\textnormal{N}}$ is its normalization with conductor $\bc$.  If $R^{\textnormal{N}}$ is strongly $F$-regular, then
\[
\bc = \taufg(R) = \taub(R).
\]
\end{thm}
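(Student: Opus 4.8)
The plan is to deduce everything from a single application of the transformation rule of Corollary~\ref{cor.TauTransformsUnderFiniteExtensions} to the finite extension $R \subseteq R^{\textnormal{N}}$, together with the hypothesis on the normalization, and then to read off both containments with $\bc$ from the degree-zero part of the resulting sum. First I would check the hypotheses: since $R$ is $F$-finite it is excellent, so $R^{\textnormal{N}}$ is a module-finite, $F$-finite, reduced extension of $R$; thus Corollary~\ref{cor.TauTransformsUnderFiniteExtensions} applies with $L = R$ and $M = R^{\textnormal{N}}$ (both of which have full support), yielding
\[
\taub(R) = \sum_{e \geq 0} \sum_{\phi} \phi\bigl( {}^e \taub(R^{\textnormal{N}}) \bigr),
\]
where $\phi$ ranges over $\Hom_R({}^e R^{\textnormal{N}}, R)$.

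Next I would feed in strong $F$-regularity of $R^{\textnormal{N}}$. By Proposition~\ref{pr:Fcostrong} this means $R^{\textnormal{N}}$ is $F$-coregular, so $\taub(R^{\textnormal{N}}) = (R^{\textnormal{N}})_{*} = R^{\textnormal{N}}$, the last equality by Proposition~\ref{pr:bigtestint}. Hence the formula collapses to
\[
\taub(R) = \sum_{e \geq 0} \sum_{\phi} \phi\bigl( {}^e R^{\textnormal{N}} \bigr),
\]
again with $\phi$ running over $\Hom_R({}^e R^{\textnormal{N}}, R)$. Now both containments with $\bc$ drop out of this one identity. For $\taub(R) \subseteq \bc$: Lemma~\ref{lem.ImageFromNormalizationIsInConductor} says precisely that $\Image(\phi) \subseteq \bc$ for every $e \geq 0$ and every $R$-linear $\phi : {}^e R^{\textnormal{N}} \to R$, so each summand lies in $\bc$. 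For $\bc \subseteq \taub(R)$: the $e = 0$ summand is $\sum_{\phi \in \Hom_R(R^{\textnormal{N}}, R)} \phi(R^{\textnormal{N}})$, which is exactly $\bc$ by the characterization of the conductor in Proposition~\ref{prop.CharsOfConductor}(iii), and this summand is contained in the whole sum. Therefore $\taub(R) = \bc$.

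Finally I would close the loop with the general chain $\taub(R) \subseteq \taufg(R) \subseteq \bc$: the first inclusion holds by definition, and the second is Proposition~\ref{pr:tfgcond}, which applies since characteristic $p > 0$ is in particular equal characteristic. As the two outer terms now coincide with $\bc$, so does $\taufg(R)$, giving $\bc = \taufg(R) = \taub(R)$. The only point deserving a moment's attention is the bookkeeping of the degree-zero term: one must note that ${}^0 R^{\textnormal{N}}$ carries the standard $R$-module structure, so $\Hom_R({}^0 R^{\textnormal{N}}, R) = \Hom_R(R^{\textnormal{N}}, R)$ and the $e=0$ summand really is the conductor. Beyond that there is no genuine obstacle here, since all of the substance has been packaged into Corollary~\ref{cor.TauTransformsUnderFiniteExtensions} and Lemma~\ref{lem.ImageFromNormalizationIsInConductor}; the theorem is a short formal consequence.
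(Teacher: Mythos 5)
Your proposal is correct and follows essentially the same route as the paper: apply Corollary~\ref{cor.TauTransformsUnderFiniteExtensions} to $R \subseteq R^{\textnormal{N}}$, collapse $\taub(R^{\textnormal{N}})$ to $R^{\textnormal{N}}$ by strong $F$-regularity, identify the $e=0$ summand with $\bc$ via Proposition~\ref{prop.CharsOfConductor}(iii), and squeeze $\taufg(R)$ using $\taub(R) \subseteq \taufg(R) \subseteq \bc$ (Proposition~\ref{pr:tfgcond}). Your extra appeal to Lemma~\ref{lem.ImageFromNormalizationIsInConductor} for the containment $\taub(R) \subseteq \bc$ is harmless but redundant, since that containment already follows from the chain through $\taufg(R)$.
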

\begin{proof}
First recall that $\tau_{\textnormal{fg}}(R) \subseteq \bc$, by Proposition~\ref{pr:tfgcond}.
Thus we now have the containments
\[
\taub(R) \subseteq \tau_{\textnormal{fg}}(R) \subseteq \bc.
\]
On the other hand by Corollary \ref{cor.TauTransformsUnderFiniteExtensions},
\[
\taub(R) = \sum_{e \geq 0} \sum_{\phi \in \Hom_R({}^e(R^{\textnormal{N}}), R)} \phi({}^e \taub(R^{\textnormal{N}})) = \sum_{e \geq 0} \sum_{\phi \in \Hom_R({}^e(R^{\textnormal{N}}), R)} \phi({}^e R^{\textnormal{N}})
\]
By considering only $e = 0$ we have that $\bc \subseteq \taub(R)$ by Proposition \ref{prop.CharsOfConductor}(iii) which completes the proof.
\end{proof}

\begin{remark}
This theorem generalizes D. McCulloch's result \cite[Lemma 5.3.3]{Mcc-thesis} that  $\taufg(R) = \bc(R)$ for a reduced binomial ring $R$ of positive characteristic, because the normalization of such a ring is strongly $F$-regular by Smith \cite[proof of the last Corollary]{Sm-binom}.
\end{remark}

\def\cprime{$'$} \def\cprime{$'$}
  \def\cfudot#1{\ifmmode\setbox7\hbox{$\accent"5E#1$}\else
  \setbox7\hbox{\accent"5E#1}\penalty 10000\relax\fi\raise 1\ht7
  \hbox{\raise.1ex\hbox to 1\wd7{\hss.\hss}}\penalty 10000 \hskip-1\wd7\penalty
  10000\box7}
\providecommand{\bysame}{\leavevmode\hbox to3em{\hrulefill}\thinspace}
\providecommand{\MR}{\relax\ifhmode\unskip\space\fi MR}
\providecommand{\MRhref}[2]{%
  \href{http://www.ams.org/mathscinet-getitem?mr=#1}{#2}
}
\providecommand{\href}[2]{#2}

\end{document}